%%%%Yassine Tahraoui on 15-10-2022--------------------------
\documentclass[11pt,twoside, a4paper, english, reqno]{amsart}
\usepackage{a4wide}
\usepackage{amscd}
\usepackage{amssymb}
\usepackage{amsthm}
\usepackage{graphicx}
\usepackage{amsmath,calrsfs}
\usepackage{latexsym}
\usepackage{enumitem,dsfont}

\usepackage{upref}
\usepackage[colorlinks]{hyperref}%%,backref,backref
\usepackage{color,graphics}
\usepackage{upref,hyperref,color}
\usepackage[usenames,dvipsnames]{xcolor}
%%%%%%%%%%%%%%%%%%%%%%%
\usepackage{pdfsync}
\usepackage{ulem,cancel,pgf}
\setcounter{tocdepth}{1}

\usepackage{frcursive}

%\def\k{\mathcal{k}}
%%%%%%%%%%%%%%%%%%%%%%%

%\usepackage{refcheck}
\setlength{\topmargin}{-.5cm}
\setlength{\textheight}{24cm}
\setlength{\evensidemargin}{0cm}
\setlength{\oddsidemargin}{0cm}
\setlength{\textwidth}{16cm}
\theoremstyle{plain}
\newtheorem{theorem}{Theorem}[section]
\theoremstyle{plain}
\newtheorem{lemma}[theorem]{Lemma}
\newtheorem{prop}[theorem]{Proposition}
\newtheorem{cor}[theorem]{Corollary}

\theoremstyle{definition}
\newtheorem{definition}{Definition}[section]

\newtheorem{remark}{Remark}[section]

\newtheorem*{maintheorem*}{Main Theorem}
\newtheorem*{maincorollary*}{Main Corollary}

\numberwithin{equation}{section} \allowdisplaybreaks

\title[Optimal control  of  2D third grade fluids ]
{Optimal control  of  two dimensional   third grade fluids   }

\date{\today }

\thanks{
}

\author[Yassine Tahraoui]{ Yassine Tahraoui}
\address[ Yassine Tahraoui]{\newline
Center for Mathematics and Applications (NovaMath), FCT NOVA}
\email[Yassine Tahraoui]{tahraouiyacine@yahoo.fr}

\author[Fernanda Cipriano]{Fernanda Cipriano}
\address[Fernanda Cipriano]{\newline
Center for Mathematics and Applications (NovaMath), FCT NOVA and Department of Mathematics, FCT NOVA}
\email[Fernanda Cipriano]{cipriano@fct.unl.pt}
%%%%%%%%

\usepackage{comment}
\begin{document}
\begin{abstract}
		The aim of this work is to study the optimal control problems of flows  governed by the incompressible third grade fluid  equations with Navier-slip boundary conditions.  After recalling a result on the  well-posedness of the state equations,  we study the existence and the uniqueness of solution to the 	linearized state  and  adjoint equations. Furthermore, we present a  stability result for the state,  and show that the solution of the linearized equation coincides with the G\^ateaux derivative of the control-to-state mapping. 
		Next, we prove the existence of an optimal solution and establish the first order optimality conditions. Finally,  an uniqueness result of the coupled system constituted by the state equation, the adjoint equation and  the first order optimality condition is established.
\end{abstract} 
\maketitle
%\begin{footnotesize}
\textbf{Keywords:} Non-Newtonian fluid, Third grade fluid, Navier-slip boundary conditions, Optimal control, Necessary optimality condition.\\[2mm]
\hspace*{0.45cm}\textbf{MSC:} 35Q35,49K20,76A05, 76D55 \\

%\end{footnotesize}
\tableofcontents
\section{Introduction} 
In this work, we are concerned with the optimal control of the velocity field $y$   of a non-Newtonian fluid
filling a two-dimensional bounded domain  with a smooth boundary.
 More precisely, we consider a tracking  problem and the aim is  to minimize the following  cost functional
\begin{align*}
J(U,y)=\dfrac{1}{2}\int_0^T\Vert y-y_d\Vert_2^2dt+\dfrac{\lambda}{2} \int_0^T\Vert U\Vert_2^2dt,
\end{align*}
where  $y_d \in (L^2(D\times (0,T)))^2$ corresponds to  a desired target velocity field,  $\lambda \geq 0$
  sets the intensity of the cost, the control acts through the external force $U,$ and the velocity field $y$ is constrained to satisfy
the incompressible third grade fluid equation
\begin{align}
\label{111}
\begin{cases}
&\partial_t(v(y))-\nu \Delta y+(y\cdot \nabla)v(y)+\displaystyle\sum_{j=1}^2v(y)^j\nabla y^j-(\alpha_1+\alpha_2)\text{div}(A(y)^2) -\beta \text{div}[tr(A(y)A(y)^T)A(y)]\\[0.2cm]
&\qquad= -\nabla \mathbf{P} +U,\quad
v(y):=y-\alpha_1\Delta y, \quad A(y):= \nabla y+\nabla y^T,
\end{cases}
\end{align}
where the constant $\nu$ represents the fluid viscosity, $\alpha_1,\alpha_2$, $\beta$ are  the material moduli, and  $\mathbf{P}$ denotes the pressure. The equation will be supplemented with  a divergence free initial condition, and  a homogeneous  Navier-slip boundary condition
which allows the slippage of the fluid against the boundary wall (see Section \ref{S3} for more details). \\

 Most studies on fluid dynamics have been devoted to Newtonian fluids, which are characterized by the classical Newton's  law of viscosity,
establishing a linear relation between the shear stress and the strain rate. However,
there exist many real, industrial, or physiological fluids  with nonlinear viscoelastic behavior  that does not obey Newton's law of viscosity, and consequently  cannot be described by the classical  viscous Newtonian fluid model. These fluids include natural biological fluids such as blood, geological flows and others, and arise 
	in  polymer processing, coating, colloidal suspensions and emulsions, ink-jet prints, etc. (see e.g \cite{DR95,FR80,RKWA17}). Therefore, it is necessary to consider  more general fluid models. \\
	
	 Recently, the class of non-Newtonian fluids of differential type has received a special attention,  since it could be related to the viscous Camassa and Holm equation, shallow water models, geodesic motion on
	the volume-preserving diffeomorphism group for a metric containing the $H^1$-norm of the fluid velocity  (see \cite{Busuioc, CL17, Hol-Mar-Rat-98}) and it  found to be useful in turbulence theory, see \cite{CH1998}. 
	In order to describe the evolution of this special type of fluids,  we consider  the velocity field $y$ of	the fluid, and introduce the Rivlin-Ericksen kinematic tensors in \cite{RE55} $A_n, n\geq 1$, defined by  
			\begin{align*}
			\left\{\begin{array}{ll}
			A_1(y)&=\nabla y+\nabla y^T,%%%deformation tensor
			\\A_n(y)&=\dfrac{d}{dt}A_{n-1}(y)+A_{n-1}(y)(\nabla y)+(\nabla y)^TA_{n-1}(y), \quad n=2,3,\cdots
			\end{array}
			\right.
			\end{align*}
		The constitutive law of fluids of grade $n$ 
		 reads
		\begin{align*}
		\mathbb{T}=-pI+F(A_1,\cdots,A_n), \quad 
			\end{align*}
		where  $F$ is an isotropic polynomial function    of degree $n$, subject to the usual	requirement of material frame indifference.
			The constitutive law of   third grade fluid is given by the following  equation	 
				\begin{align*}
				\mathbb{T}=-pI+\nu A_1+\alpha_1A_2+\alpha_2A_1^2+\beta_1 A_3+\beta_2(A_1A_2+A_2A_1)+\beta_3tr(A_1^2)A_1,
				\end{align*}
			where $\mathbb{T}$ is the shear stress
			tensor and	$(\alpha_i)_{1,2}, (\beta_i)_{1,2,3}$ are material moduli. 			The momentum equations are given by
				$$\dfrac{Dy}{Dt}=\dfrac{dy}{dt}+y\cdot \nabla y=div(\mathbb{T}).$$
				
				If $\beta_i=0,i=1,2,3$, the constitutive equations correspond to a second grade fluid. It has been shown  that the Clausius-Duhem
				inequality and the assumption that the Helmholtz free energy   is minimal at equilibrium requires the material moduli to satisfy
				\begin{align}\label{secondlaw}
				\nu \geq 0,\quad \alpha_1+\alpha_2=0, \quad \alpha_1\geq 0. 
				\end{align}

	Although second grade fluids are mathematically more treatable,	dealing  with  several non-Newtonian fluids, the rheologists have not
				confirmed these restrictions \eqref{secondlaw}, thus give  the conclusion that the fluids that have been tested are not fluids of second grade but are fluids that are characterized by a different constitutive structure, we refer to \cite{FR80} and references therin for more details. Moreover,  the second grade fluid model does not capture important rheological properties as for instance the shear thinning and shear thickening effects, so there is a real need to study the more complex third	grade fluid model. Following \cite{FR80}, in order to allow the motion of the fluid to be compatible with thermodynamic, it should be imposed that
						
				\begin{equation*}
				\nu \geq 0, \quad \alpha_1\geq 0, \quad |\alpha_1+\alpha_2 |\leq \sqrt{24\nu\beta}, \quad \beta_1=\beta_2=0, \beta_3=\beta \geq 0.
				\end{equation*}

From a practical point of view, recently special attention has been devoted to the study of non-Newtonian viscoelastic fluids of differential type. It is worth to mention that several simulations studies have been performed by using the third grade fluid models, in order to understand and explain the characteristics of several nanofluids (see 
\cite{ HUAal20, PP19,RHK18} and references therein).
We recall that nanofluids  are  engineered colloidal suspensions of nanoparticles (typically made of metals, oxides, carbides, or carbon nanotubes) in a base fluid as water, ethylene glycol and oil, which exhibit enhanced thermal conductivity compared to the base fluid, which turns out to be of  great  potential to be used in  technology, including heat transfer, microelectronics, fuel cells, pharmaceutical processes, hybrid-powered engines, engine cooling/vehicle thermal management, etc.  
Therefore  the  mathematical analysis of third grade fluids equations should be relevant to  predict and control the  behavior of these fluids, in order  to design optimal flows that can be successfully used and applied in the industry. From mathematical point of view,  fluids of grade $3$ constitute an hierarchy of fluids with  increasing complexity and more  nonlinear terms, which is  more complex  and require more involved analysis.\\
\\

The study the equation \eqref{111} requires a boundary condition, and an initial condition in a suitable functional space. 
Besides the most studies on fluid dynamic equations consider the Dirichlet  boundary condition, which assumes that the particles adjacent to the boundary surface have the same velocity  as the boundary, 
there are physical reasons to consider slip boundary conditions. Namely,   practical studies (see e.g  \cite{ RKWA17, WD97})
show that  viscoelastic fluids slip against  the boundary, and on the other hand, mathematical studies turn out that the Navier boundary conditions are compatible with the vanishing viscosity transition (see \cite{CC_1_13, CC_2_13, CMR98, K06}). 
We recall that for  appropriate nondimensionalizations, the Reynolds number $Re$ is equal to $1/\nu$, then the vanishing viscosity corresponds to the 
transition to turbulent regime, which is associated with high values of $Re$. In this paper we consider  a homogeneous Navier-slip boundary condition.
Let us mention that the third grade fluid equation  with the Dirichlet boundary conditions was studied in  \cite{AC97, SV95}, where the authors  proved the existence and the uniqueness of local solutions for initial conditions in $H^3$ or  global in time solution for small initial data when compared with the viscosity (see also \cite{BL99}). 
Later on \cite {Bus-Ift-1, Bus-Ift-2}, the authors considered the  equation with a homogeneous Navier-slip boundary conditions and established the well-posedness of a global solution for initial conditions in $H^2$, without any restriction on the size of the data. Recently, the authors in \cite{AC20, Cip-Did-Gue} extended  the later deterministic result  to  the stochastic models.  It is worth recalling that the question of uniqueness in 3D is always an open problem.\\

 The control problems  of Newtonian fluids  (fluids of grade $1$), where the flows are   described by  Navier-Stokes  equations have been extensively studied in the literature. In general, the issue was the  control of  the  turbulence inside a flow  or tracking the velocity of the flows. Without exhaustiveness, let us refer to   \cite{Abergel90,Delos-Griesse, Hinze-Kunisch} and the references therein. Directing  a  velocity field to a desired velocity
	field over time has a wide range of applications in engineering and science such as   combustion, chemical reacting flows and  design
	problems $\cdots$etc. (see e.g. \cite{Gunzburger99}). There is a large literature on tracking control problems for Newtonian flows. Let us mention  
	 \cite{Gunzburger99}, where the authors   derived an optimality system for the optimal solutions for an optimal control problem of tracking the velocity for Navier-Stokes flows in bounded two-dimensional domains with bounded distributed controls.  Then, a second-order sufficient optimality condition were established in \cite{Tro-Wach06}. In \cite{Chem-Cip}, the authors considered  a boundary optimal control for two dimensional Navier-Stokes equations, where  the control acts on the boundary through an injection-suction device. Recently, the authors in   \cite{CASAS-KUNISCH} studied an optimal control problem for a two dimensional Navier-Stokes equations with measure valued controls.  \\

On the other hand, despite that there exist many real industrial or physiological fluids   cannot be described by the classical linearly viscous Newtonian model,  the optimal control of non-Newtonian viscoelastic fluids have been less considered and rarely investigated. To the best of the author’s knowledge, the first result in the theory of optimal control of viscoelastic (non-Newtonian) fluids have been achieved in \cite{Kunisch2000}. The authors  studied an optimal control of viscoelastic fluid flow in a $4$ to $1$ contracting channel, where  the control mechanism is based on	heating or cooling the fluid along a portion of the boundary of the flow domain.  They obtained an  optimality system, derived by
	the use of the Lagrange multipliers with two different cost functionals: one of tracking type, with the viscoelastic flow being tracked to the flow
	corresponding to the viscous flow, the other penalizing negative contributions of the velocity component in direction of the span of the
	channel. Some numerical simulations was considered as well. In \cite{Wach-Roubi2010},   an optimal control problem for the evolutionary flow for incompressible non-Newtonian fluids in  a two-dimensional bounded domain have been studied. The authors proved   the existence of optimal controls  and established   first-order necessary  and second-order sufficient  optimality conditions, where the cost functional was of tracking type.  In \cite{Arada14},  the author considered an  evolutionary flow of incompressible quasi-Newtonian  shear-thickening fluids in two and three dimensional setting. The   Newtonian constitutive equation incorporating a shear-rate-dependent viscosity, where the viscosity  increases with increasing shear rate such as Carreau model (see  \cite[(1.1)]{Arada14}). He studied the  control  of the system through a distributed mechanical force leading the velocity to a given target field and established  a necessary optimality conditions.\\

It is worth mentioning that the previous works considered non-Newtonian fluids but not of differential type. To the best of our knowledge, the control problem for the second grade fluids (differential type) has been adressed for the first time in \cite{Arada-Cipriano}, where the authors proved the existence of an optimal control and deduced the first order optimality conditions, where the cost functional was of tracking type. Recently, the authors in  
\cite{CC22} established an uniqueness result for the complete  first order optimality system, by assuming enough intensity of the cost. 
The control problem for stochastic second grade fluid models have been studied in \cite{CC18, CP19}.  Since there are many applications  contain
 control mechanisms that one would like to adjust in an optimal way to achieve a given
objective as well as possible, this work corresponds to the next step  to control complex differential fluids. As far as we know, the optimal control problem for third grade fluids is being adressed  here for the first time. \\

The article is organized as follows:
in Section \ref{S3}, we state  the third grade fluid model and define the  appropriate functional spaces. Then, we  collect some  estimates for the state already available in the literature and that are convenient for our analysis. Next, we formulate the control prblem and  establish the main results of the article.
Section \ref{S4} is devoted to show the existence and the uniqueness of the solution to the linearized state equation.
In Section \ref{S5}, we prove a stability result for the state equation, which will be a key ingredient in Section \ref{S6} to  study the differentiability of the control-to-state mapping.
In Section \ref{S7}, we write the adjoint equations  and prove the existence and uniqueness of the solution. 
Finally, in  Section \ref{S8} we   establish a duality relation between the solution of the linearized equation and the adjoint state.  Next, we prove the existence of the solution to the control problem, and we deduce the first order optimality  condition.  Section \ref{S-uniq}, which deals with the quadratic Lagrangian,  is devoted to the proof of the uniqueness of the solution to the coupled system for a large cost intensity. 
%\section{Content of the study}\label{S2}

\section{Formulation of the control problem and main results}\label{S3}
In this  section, we present some results known in the literature that will be  convenient for further analysis. Next, we formulate the  control problem and establish the  main results.
\subsection{The state equation}
The goal of this work is to study the optimal  control of a  non-Newtonian third grade fluid, where the control is introduced via the  external forces. The fluid  fills a bounded  and simply connected domain $D  \subset  \mathbb{R}^2$ with regular (smooth) boundary $\partial D$, and it is governed by the following equations
\begin{equation}
\label{I}
\left\{\begin{array}{ll}
\partial_t(v(y))-\nu \Delta y+(y\cdot \nabla)v+\sum_{j}v^j\nabla y^j-(\alpha_1+\alpha_2)\text{div}(A^2) -\beta \text{div}(|A|^2A)&\\[0.5mm]
\qquad= -\nabla \mathbf{P} +U, \quad\quad \text{div}(y)=0 \quad &\hspace*{-3cm}\text{in } D\times (0,T) ,\\[0.15cm]
y\cdot \eta=0, \quad [\eta \cdot  \mathbb{D}(y)]\cdot \tau=0  \quad &\hspace*{-3cm}\text{on } \partial D \times (0,T),\\[0.15cm]
y(x,0)=y_0(x) \quad & \hspace*{-3cm}\text{in } D,
\end{array}
\right.
\end{equation}
where $y:=(y_1,y_2)$ denotes  the fluid velocity field,  $v:=v(y):=y-\alpha_1 \Delta y$ and   $A=A(y)=\nabla y+\nabla y^T=2 \mathbb{D}(y)$. $\mathbf{P}$ denotes  the pressure and $U=(U_1,U_2)$ denotes the external force. The pair  $(\eta,\tau)$
stands for  the  external  normal vector and the unitary tangent vector to the boundary $\partial D$ with positive  orientation.
The constants  $\alpha_1,\alpha_2$ and $\beta$ are  the material moduli and satisfy the   conditions:
\begin{equation}
\label{condition1}
\nu \geq 0, \quad \alpha_1\geq 0, \quad |\alpha_1+\alpha_2 |\leq \sqrt{24\nu\beta}, \quad  \beta \geq 0. 
\end{equation}
It is worth recalling that  \eqref{condition1} allows the motion of the fluid to be compatible with  thermodynamic laws. 
\\

\subsection{Functional spaces and notations}
For a functional space $E$ and a positive integer $k$, we define
$$  (E)^k:=\{(f_1,\cdots,f_k): f_l\in E,\quad l=1,\cdots,k\}.$$
Let us introduce the following spaces:
\begin{equation}
\begin{array}{ll}
H&=\{ y \in (L^2(D))^2 \,\vert \text{ div}(y)=0 \text{ in } D \text{ and } y\cdot \eta =0 \text{ on } \partial D\}, \nonumber\\[1mm]
V&=\{ y \in (H^1(D))^2 \,\vert \text{ div}(y)=0 \text{ in } D \text{ and } y\cdot \eta =0 \text{ on } \partial D\}, \\[1mm]
W&=\{ y \in V\cap (H^2(D))^2\; \vert\, (\eta \cdot 
\mathbb{D}(y))\cdot \tau =0 \text{ on } \partial D\}, \nonumber\\[1mm]
\widetilde{W}&=(H^3(D))^2\cap W. \nonumber
\end{array}
\end{equation}

First, we recall the Leray-Helmholtz projector $\mathbb{P}: (L^2(D))^2 \to H$, which is a linear bounded operator characterized by the following $L^2$-orthogonal decomposition
$$v=\mathbb{P}v+\nabla \varphi, \quad \forall \varphi \in H^1(D). $$

Now, let us introduce  the scalar product between two matrices  $
A:B=tr(AB^T)$
and denote $\vert A\vert^2:=A:A.$
The divergence of a  matrix $A\in \mathcal{M}_{2\times 2}(E)$ is given by 
$(\text{div}(A)_i)_{i=1,2}=(\displaystyle\sum_{j=1}^2\partial_ja_{ij})_{i=1,2}. $\\

The space $H$ is endowed with the  $L^2$-inner product $(\cdot,\cdot)$ and the associated norm $\Vert \cdot\Vert_{2}$. We recall that
\begin{align*}
(u,v)&=\int_D u\cdot vdx=\sum_{i=1}^2\int_Du_iv_idx, \quad  \forall u,v \in (L^2(D))^2,\\
(A,B)&=\int_D A: Bdx , \quad  \forall A,B \in \mathcal{M}_{2\times 2}(L^2(D)).
\end{align*}
On the functional spaces $V$, $W$ and  $\widetilde{W}$,
we will consider  the following inner products 
\begin{equation}
\begin{array}{ll} 
(u,z)_V&:=(v(u),z)=(u,z)+2\alpha_1(\mathbb{D}u,\mathbb{D}z),\\[1mm]
(u,z)_W&:=(u,z)_V+(\mathbb{P}v(u),\mathbb{P}v(z)),\\[1mm]
(u,z)_{\widetilde{W}}&:=(u,z)_V+(\text{curl}v(u),\text{curl}v(z)),
\end{array}
\end{equation}
and denote by $\Vert \cdot\Vert_V,\Vert \cdot\Vert_W$ and $\Vert \cdot\Vert_{\widetilde{W}}$ the corresponding norms.\\

For the sake of simplicity, we do not distinguish between scalar, vector or matrix-valued   notations when it is clear from the context. In particular, $\Vert \cdot \Vert_E$  should be understood as following
\begin{itemize}
	\item $\Vert f\Vert_E^2= \Vert f_1\Vert_E^2+\Vert f_2\Vert_E^2$ for any $f=(f_1,f_2) \in (E
	)^2$.
	\item $\Vert f\Vert_{E}^2= \displaystyle\sum_{i,j=1}^2\Vert f_{ij}\Vert_E^2$ for any $f\in \mathcal{M}_{2\times 2}(E)$.
\end{itemize}

Throughout the article, we will denote by $C,C_i, i\in \mathbb{N}$,   generic constants, which may varies from line to line.\\

% of the article.  

Now, let us recall   some results about  the solution of \eqref{I}  based on  \cite{Bus-Ift-2}.
\begin{theorem}\cite[Thm. 1]{Bus-Ift-2}
	\label{THm1} Let $y_0\in W$ and $U\in L^2_{loc}([0,\infty[;(L^2(D))^2)$, then there exists a global  unique solution $y$ to \eqref{I}. Moreover, if $y_0\in \widetilde{W}$ and  $U\in L^2_{loc}([0,\infty[;(H^1(D))^2)$  the solution $y$ belongs to $L^\infty_{loc}([0,\infty[;\widetilde{W})$.
\end{theorem}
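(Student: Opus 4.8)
The statement is quoted from \cite{Bus-Ift-2}, so I only sketch the structure of the argument I would follow. The plan is a Faedo--Galerkin scheme combined with uniform a priori estimates, a compactness passage to the limit, and a Gronwall estimate on the difference of two solutions for uniqueness. The natural energy space is $V$ with the norm $\|\cdot\|_V$, which is well adapted to the operator $y\mapsto v(y)=y-\alpha_1\Delta y$: an integration by parts compatible with the Navier-slip condition gives $(\partial_t v(y),y)=\tfrac12\tfrac{d}{dt}\|y\|_V^2$. The key structural feature is the grade-three stress $-\beta\,\mathrm{div}(|A|^2A)$, whose testing produces the dissipation $\tfrac{\beta}{2}\|A(y)\|_{L^4}^4$; this extra integrability of the strain rate is precisely what lets one close the estimates \emph{globally} in time with \emph{no} smallness assumption on the data.

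First I would project \eqref{I} onto the span of the first $n$ elements of a basis of $W$ adapted to $(\cdot,\cdot)_W$ (for instance eigenfunctions of the associated Stokes-type operator), so that both boundary conditions $y\cdot\eta=0$ and $(\eta\cdot\mathbb{D}(y))\cdot\tau=0$ are built into the approximation. Testing the projected system with $y_n$ yields the basic energy identity: the convective terms $(y\cdot\nabla)v+\sum_j v^j\nabla y^j$ combine into a form satisfying $(B(y_n),y_n)=0$ because $\mathrm{div}(y_n)=0$ and $y_n\cdot\eta=0$; the viscous term gives $\nu\|\nabla y_n\|_2^2$; the grade-three term gives $\tfrac{\beta}{2}\|A(y_n)\|_{L^4}^4$; and the grade-two contribution $-(\alpha_1+\alpha_2)\mathrm{div}(A^2)$ is dominated by the viscous and $\beta$-dissipations via Young's inequality, which is exactly where the thermodynamic compatibility condition $|\alpha_1+\alpha_2|\le\sqrt{24\nu\beta}$ of \eqref{condition1} is used. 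After estimating $(U,y_n)$ and applying Gronwall's lemma, this provides a uniform bound for $y_n$ in $L^\infty_{\loc}(V)$ together with $A(y_n)\in L^4_{\loc}(L^4)$.

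Next I would derive the higher regularity reaching $W$. Testing the projected equation with $\mathbb{P}v(y_n)$, whose $L^2$-norm controls the extra part of $\|y_n\|_W$, produces $\tfrac12\tfrac{d}{dt}\|y_n\|_W^2$ from the time derivative, after which the nonlinear contributions must be bounded. The genuinely hard point is the strongly nonlinear grade-three term $\mathrm{div}(|A|^2A)$, cubic in $\nabla y$; here one exploits the $L^4$ bound on $A(y_n)$ already obtained, two-dimensional interpolation (Ladyzhenskaya / Gagliardo--Nirenberg), and elliptic regularity for the problem $v(y)=f,\ \mathrm{div}(y)=0$ with the Navier-slip condition, which yields $\|y\|_{H^2}\lesssim\|\mathbb{P}v(y)\|_2+\|y\|_V$. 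A further Gronwall argument then bounds $y_n$ uniformly in $L^\infty_{\loc}(W)$. Under the stronger hypotheses $y_0\in\widetilde W$ and $U\in L^2_{\loc}(H^1)$, I would repeat this step testing with $\mathrm{curl}\,v(y_n)$, matching the $\widetilde W$ inner product, to obtain the uniform $L^\infty_{\loc}(\widetilde W)$ bound asserted in the last statement.

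Finally, the uniform estimates together with the Aubin--Lions lemma give strong convergence of a subsequence of $y_n$, sufficient to pass to the limit in every nonlinear term and recover a solution, the pressure being reconstructed by de Rham's theorem. For uniqueness I would subtract the equations for two solutions $y,\tilde y$ with identical data, test the equation for $w=y-\tilde y$ with $w$, and estimate the differences of the convective and grade-two/grade-three terms using the monotonicity of $A\mapsto|A|^2A$ (which renders the leading difference sign-definite) together with the $V$- and $H^2$-bounds already available; Gronwall's inequality then forces $w\equiv0$. The main obstacle throughout is closing the $W$-estimate globally: the cubic growth of the grade-three stress must be controlled using only the $L^4$-dissipation that this same term supplies, and securing this balance is exactly the role played by the thermodynamic condition \eqref{condition1}.
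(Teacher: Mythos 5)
Theorem \ref{THm1} is quoted verbatim from \cite{Bus-Ift-2}: the paper under review contains no proof of it, only the recalled $H^1$, $H^2$ and $H^3$ estimates of Section \ref{S3} and a remark on the role of the Navier-slip condition. Your sketch (Galerkin scheme in a basis adapted to $W$, the $V$-energy estimate in which the dissipation $\tfrac{\beta}{2}\Vert A\Vert_{L^4}^4$ absorbs the grade-two term thanks to \eqref{condition1}, higher estimates via $\mathbb{P}v(y_n)$ and $\mathrm{curl}\,v(y_n)$, and uniqueness by monotonicity of $A\mapsto |A|^2A$ plus Gronwall) follows essentially the same route as that reference, and matches the techniques this paper itself redeploys for the linearized, stability and adjoint problems in Sections \ref{S4}--\ref{S7}.
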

Following \cite{Bus-Ift-2}, for $t\geq 0$ we have  the following results.
\begin{lemma}($H^1$  estimates)  There exist $C,C_1,C_2,K_0>0$ such that 
\begin{align*}
%	\begin{equation*}
%	\begin{array}{ll}
	\Vert y(t)\Vert_{H^1} &\leq  e^{Ct}\big(\Vert y_0\Vert_{H^{1}}+\Vert U\Vert_{L^2(D\times (0,t))}\big):=M_0(t);\\[1mm]
	\Vert A(y)\Vert_{L^4(D\times (0,t))} &\leq \bigl(\dfrac{2}{\beta}\bigr)^{\frac{1}{4}}\sqrt{M_0(t)};\\
	\Vert y\Vert_{L^4((0,t); W^{1,4}(D))}&\leq  K_0e^{\frac{Ct}{2}}\bigl[\bigl(\dfrac{2}{\beta}\bigr)^{\frac{1}{4}}+C_1t^{\frac{1}{4}}e^{\frac{Ct}{2}}\bigr]\left(1+\Vert y_0\Vert_{H^{1}}+\Vert U\Vert_{L^2(D\times (0,t))}\right):=M_1(t);\\[0.5mm]
	\Vert y \Vert_{L^1(0,t;L^\infty(D))} &\leq C_2t^{\frac{3}{4}}M_1(t):=M_2(t).
%	\end{array}
%	\end{equation*}
	\end{align*}
\end{lemma}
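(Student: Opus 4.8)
The plan is to derive all four bounds from a single energy estimate, obtained by testing the state equation \eqref{I} with $y$ itself, and then to upgrade the resulting information through Korn's and Sobolev's inequalities.

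First I would take the $(L^2(D))^2$ inner product of the momentum equation in \eqref{I} with $y$. By the definition of the $V$-inner product the time-derivative term produces $\frac{1}{2}\frac{d}{dt}\Vert y\Vert_V^2$. The two transport terms combine into a perfect transport of $y\cdot v$: since $\text{div}(y)=0$,
\begin{align*}
((y\cdot\nabla)v,y)+\sum_j(v^j\nabla y^j,y)=\int_D (y\cdot\nabla)(y\cdot v)\,dx=\int_{\partial D}(y\cdot v)(y\cdot\eta)\,dS=0,
\end{align*}
because $y\cdot\eta=0$ on $\partial D$. For the stress terms I would use $\text{div}(A)=\Delta y$ (valid as $\text{div}(y)=0$) and integrate by parts; since $A$ is symmetric, $M:\nabla y=M:\mathbb{D}(y)$, so the viscous and $\beta$-terms contribute $2\nu\Vert\mathbb{D}(y)\Vert_2^2$ and $\frac{\beta}{2}\Vert A\Vert_{L^4}^4$ respectively, while every boundary integral produced, of the form $\int_{\partial D}(M\eta)\cdot y\,dS$ with $M\in\{A,A^2,|A|^2A\}$, vanishes: on $\partial D$ one has $y=(y\cdot\tau)\tau$, and the Navier-slip condition $(\eta\cdot\mathbb{D}(y))\cdot\tau=0$ kills the relevant traces (for $M=A^2$ one uses that a $2\times2$ symmetric traceless matrix satisfies $A^2=-\det(A)I$). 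The cubic term $(\alpha_1+\alpha_2)\text{div}(A^2)$ tested against $y$ yields $\frac{\alpha_1+\alpha_2}{2}\int_D\text{tr}(A^3)\,dx$, which vanishes in two dimensions: $A$ is symmetric with $\text{tr}(A)=2\,\text{div}(y)=0$, so its eigenvalues are $\pm\lambda$ and $\text{tr}(A^3)=0$ (if one did not exploit this, the constraint \eqref{condition1} would instead absorb the term into the $\nu$- and $\beta$-terms via Young's inequality). This leaves the clean identity
\begin{align*}
\frac{1}{2}\frac{d}{dt}\Vert y\Vert_V^2+2\nu\Vert\mathbb{D}(y)\Vert_2^2+\frac{\beta}{2}\Vert A\Vert_{L^4}^4=(U,y).
\end{align*}

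From here the first two bounds are routine. Estimating $(U,y)\le\frac12\Vert U\Vert_2^2+\frac12\Vert y\Vert_V^2$, dropping the nonnegative dissipation, and applying Grönwall's inequality gives $\Vert y(t)\Vert_V\le e^{Ct/2}(\Vert y_0\Vert_V+\Vert U\Vert_{L^2(D\times(0,t))})$; Korn's inequality makes $\Vert\cdot\Vert_V$ equivalent to $\Vert\cdot\Vert_{H^1}$ (here $\alpha_1>0$), and after absorbing the Korn constant into $C$ this yields the first bound with $M_0(t)$. Integrating the energy identity over $(0,t)$ instead, retaining $\frac{\beta}{2}\Vert A\Vert_{L^4}^4$ and bounding the right-hand side $\frac12\Vert y_0\Vert_V^2+\int_0^t(U,y)$ by $M_0(t)^2$ (using the first bound), gives $\Vert A\Vert_{L^4(D\times(0,t))}^4\le\frac{2}{\beta}M_0(t)^2$, which is the second bound.

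The third bound is where the symmetric gradient is converted into the full gradient. I would invoke the $L^4$ Korn inequality $\Vert y\Vert_{W^{1,4}(D)}\le C(\Vert\mathbb{D}(y)\Vert_{L^4(D)}+\Vert y\Vert_{L^4(D)})$ on the bounded smooth domain $D$, together with the two-dimensional embedding $H^1(D)\hookrightarrow L^4(D)$, to get
\begin{align*}
\Vert y\Vert_{L^4((0,t);W^{1,4}(D))}^4\le C\int_0^t\bigl(\Vert A\Vert_{L^4}^4+\Vert y\Vert_{H^1}^4\bigr)\,ds\le C\Bigl(\tfrac{1}{\beta}M_0(t)^2+t\,M_0(t)^4\Bigr);
\end{align*}
taking fourth roots, using $\sqrt{a}\le1+a$, and factoring out $e^{Ct/2}(1+\Vert y_0\Vert_{H^1}+\Vert U\Vert_{L^2(D\times(0,t))})$ reproduces exactly $M_1(t)$. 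The last bound then follows from the two-dimensional Morrey embedding $W^{1,4}(D)\hookrightarrow L^\infty(D)$ and Hölder's inequality in time,
\begin{align*}
\Vert y\Vert_{L^1(0,t;L^\infty(D))}\le C\int_0^t\Vert y\Vert_{W^{1,4}}\,ds\le C\,t^{3/4}\Vert y\Vert_{L^4((0,t);W^{1,4}(D))}\le C_2\,t^{3/4}M_1(t).
\end{align*}
The main obstacle is the energy identity itself: one must check carefully that the transport terms cancel and that each boundary integral arising from integrating the stress terms by parts is annihilated by the pair $y\cdot\eta=0$ and $(\eta\cdot\mathbb{D}(y))\cdot\tau=0$, and that in two dimensions the cubic stress term drops out. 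All these formal manipulations should be justified on the regular solution furnished by Theorem \ref{THm1} (or on a Galerkin approximation), the only genuinely external analytic input being the $L^4$ Korn inequality used for the $W^{1,4}$ bound.
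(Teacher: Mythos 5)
This lemma is not proved in the paper at all: it is recalled verbatim from \cite{Bus-Ift-2} (``Following \cite{Bus-Ift-2}, for $t\geq 0$ we have the following results''), and the proof in that reference is exactly the argument you propose --- the energy identity $\tfrac12\tfrac{d}{dt}\Vert y\Vert_V^2+2\nu\Vert\mathbb{D}(y)\Vert_2^2+\tfrac{\beta}{2}\Vert A\Vert_{L^4}^4=(U,y)$ obtained by testing \eqref{I} with $y$ (cancellation of the transport terms, disappearance of the $\text{div}(A^2)$ term via the $2$D traceless Cayley--Hamilton identity, annihilation of all boundary integrals by the pair $y\cdot\eta=0$ and $(\eta\cdot\mathbb{D}(y))\cdot\tau=0$), followed by Gr\"onwall, the $L^4$ Korn inequality, and the embeddings $H^1(D)\hookrightarrow L^4(D)$, $W^{1,4}(D)\hookrightarrow L^\infty(D)$ with H\"older in time. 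Your proposal is therefore correct and takes essentially the same route; the only caveat is cosmetic --- the Korn and norm-equivalence constants mean the four bounds are obtained with the stated right-hand sides up to harmless multiplicative constants (equivalently, with $\Vert\cdot\Vert_{H^1}$ read as the equivalent norm $\Vert\cdot\Vert_V$), which is also how the statement must be read in the paper.
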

\begin{lemma}($H^2$ estimates)  There exists $C_3>0$ such that 
	\begin{align*}
	\Vert y(t) \Vert_{H^2}^2&+\min\bigl(\beta,\dfrac{\alpha_1\beta}{4}\bigr)\int_0^t\Vert \vert A(y)\vert^2\Vert_{H^{1}}^2ds+\dfrac{\alpha_1\beta}{2}\int_0^t\int_D\vert A(y)\vert^2\vert \nabla A\vert^2dxds\\
	& \leq e^{C_3(t+M_2(t)+tM_0^2(t))}\Big(\Vert y_0\Vert_{H^2}+2\int_0^t\Vert U(s)\Vert_2^2ds+C_3M_1^4(t)\Big) :=M_3(t). 
	\end{align*}
\end{lemma}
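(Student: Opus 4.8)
The plan is to derive the estimate by an energy method applied to the pressure-eliminated equation, exploiting the coercivity hidden in the highest-order nonlinearity. First I would apply the Leray--Helmholtz projector $\mathbb{P}$ to the momentum equation in \eqref{I} to remove the pressure, obtaining an evolution equation for $\mathbb{P}v(y)$, and then test it in the $L^2$ inner product against $\mathbb{P}v(y)$. The time-derivative term yields $\frac{1}{2}\frac{d}{dt}\Vert \mathbb{P}v(y)\Vert_2^2$, and since $v(y)=y-\alpha_1\Delta y$, elliptic regularity for $I-\alpha_1\Delta$ under the Navier-slip boundary conditions defining $W$ gives the norm equivalence $\Vert y\Vert_{H^2}^2\sim \Vert y\Vert_V^2+\Vert \mathbb{P}v(y)\Vert_2^2=\Vert y\Vert_W^2$ on $W$. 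This equivalence is what ultimately converts a bound on $\Vert\mathbb{P}v(y)\Vert_2$ into the claimed bound on $\Vert y\Vert_{H^2}$.

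The decisive step is the treatment of the cubic stress $-\beta\,\text{div}(|A(y)|^2A)$. Splitting the test field $\mathbb{P}v(y)$ against the two pieces $y$ and $-\alpha_1\Delta y$ of $v(y)$ and integrating by parts, using $\text{div}(y)=0$ and the boundary conditions of $W$, the part paired with $-\alpha_1\Delta y$ produces exactly the two \emph{coercive} contributions $\min(\beta,\frac{\alpha_1\beta}{4})\Vert |A(y)|^2\Vert_{H^1}^2$ and $\frac{\alpha_1\beta}{2}\int_D|A(y)|^2|\nabla A|^2\,dx$ that, once integrated in time, appear on the left-hand side of the inequality. I would carry out this integration by parts carefully, keeping track of signs so that these quantities can be retained as good terms rather than estimated away.

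Next I would control the remaining terms — the viscous term $\nu\mathbb{P}\Delta y$, the convective terms $(y\cdot\nabla)v$ and $\sum_j v^j\nabla y^j$, the quadratic stress $(\alpha_1+\alpha_2)\text{div}(A^2)$, and the forcing $\mathbb{P}U$ — by Hölder's inequality together with the two-dimensional Sobolev embeddings ($H^1\hookrightarrow L^p$ for all finite $p$, the Ladyzhenskaya inequality $\Vert f\Vert_4\leq C\Vert f\Vert_2^{1/2}\Vert f\Vert_{H^1}^{1/2}$) and interpolation. The dangerous contributions are those carrying the most derivatives; in particular the combined convective term $(y\cdot\nabla)v+\sum_j v^j\nabla y^j$ formally involves three derivatives of $y$, but its special antisymmetric structure — the same that underlies the energy identity for these fluids — cancels the top-order part when paired with $\mathbb{P}v(y)$, leaving only terms controllable by $\Vert y\Vert_\infty\Vert\mathbb{P}v(y)\Vert_2^2$ and by the coercive terms. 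Each product is arranged, via Young's inequality, either to be absorbed into the left-hand coercive terms or to contribute a coefficient of the form $g(t)\Vert y\Vert_{H^2}^2$ with $g\in L^1(0,t)$; the relevant integrable coefficients are precisely $\Vert y\Vert_\infty$ and $\Vert y\Vert_{H^1}^2$, whose time integrals are already bounded by $M_2(t)$ and $tM_0^2(t)$ from the previous lemma, while the forcing contributes $2\int_0^t\Vert U\Vert_2^2$ and the $L^4$-estimate of $A(y)$ contributes the $M_1^4(t)$ term.

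After collecting and absorbing, I would reach a differential inequality of the form $\frac{d}{dt}\Vert y\Vert_{H^2}^2+(\text{coercive terms})\leq g(t)\Vert y\Vert_{H^2}^2+h(t)$ with $\int_0^t g\lesssim t+M_2(t)+tM_0^2(t)$ and $\int_0^t h\lesssim \Vert y_0\Vert_{H^2}+2\int_0^t\Vert U\Vert_2^2+C_3M_1^4(t)$, and conclude by Gronwall's inequality, the weight $e^{C_3(t+M_2(t)+tM_0^2(t))}$ being exactly the Gronwall factor. The main obstacle is the bookkeeping in the second and third steps: one must verify that the genuinely third-order terms generated by testing the cubic and convective nonlinearities against $\mathbb{P}v(y)$ are dominated by the coercive terms $\int_D|A(y)|^2|\nabla A|^2\,dx$ and $\Vert|A(y)|^2\Vert_{H^1}^2$, and not the reverse. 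This is exactly where the thermodynamic constraint $|\alpha_1+\alpha_2|\leq\sqrt{24\nu\beta}$ in \eqref{condition1} enters, via a completion-of-squares argument that balances the quadratic stress against the viscous and cubic terms and guarantees the sign needed for the absorption.
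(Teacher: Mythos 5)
A preliminary point of reference: the paper itself does not prove this lemma --- it is quoted, with the constants $M_0,\dots,M_3$, directly from \cite{Bus-Ift-2} (``Following \cite{Bus-Ift-2}, \dots''). So your proposal can only be measured against that cited proof and against the variant of the same scheme that the paper re-runs for the linearized system in Section \ref{S4}. At the level of strategy you do reproduce it: eliminate the pressure with $\mathbb{P}$, test against $\mathbb{P}v(y)$, use the equivalence $\Vert y\Vert_{H^2}^2\sim\Vert y\Vert_V^2+\Vert \mathbb{P}v(y)\Vert_2^2=\Vert y\Vert_W^2$, extract the coercive terms from the cubic stress, bound the remaining terms with coefficients $\Vert y\Vert_\infty$ and $\Vert y\Vert_{H^1}^2$ (whence $M_2(t)$ and $tM_0^2(t)$ in the Gronwall exponent), and conclude by Gronwall. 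One small bookkeeping slip: the $L^2$ part of $\Vert\,\vert A(y)\vert^2\Vert_{H^1}^2$, i.e.\ $\beta\Vert A(y)\Vert_4^4$, comes from pairing the cubic term with the $y$ piece of $v(y)$, not from the $-\alpha_1\Delta y$ piece; only $\Vert\nabla\vert A\vert^2\Vert_2^2$ and $\int_D\vert A\vert^2\vert\nabla A\vert^2dx$ come from the latter.

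There are, however, two genuine problems. First, your closing claim --- that the constraint $\vert\alpha_1+\alpha_2\vert\leq\sqrt{24\nu\beta}$ of \eqref{condition1} is ``exactly where'' the absorption of the quadratic stress is saved, via completion of squares against the viscous and cubic terms --- is a misconception in this two-dimensional setting. Since $\mathrm{div}(y)=0$, the tensor $A(y)$ is symmetric and trace-free, so by Cayley--Hamilton $A^2=\tfrac12\vert A\vert^2 I_2$ and hence $(\alpha_1+\alpha_2)\,\text{div}(A^2)=\tfrac12(\alpha_1+\alpha_2)\nabla(\vert A\vert^2)$ is a pure gradient: tested against $\mathbb{P}v(y)\in H$ (divergence-free, tangent to $\partial D$) it vanishes identically. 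Even ignoring this identity, the term is bounded by $C\vert\alpha_1+\alpha_2\vert\bigl(\int_D\vert A\vert^2\vert\nabla A\vert^2dx\bigr)^{1/2}\Vert\mathbb{P}v(y)\Vert_2$ and absorbed by Young's inequality into the coercive cubic term plus a Gronwall term, with no relation between $\alpha_1+\alpha_2$ and $\nu\beta$ required; the sharp constant $\sqrt{24\nu\beta}$ is a three-dimensional phenomenon (there $\mathrm{tr}(A^3)\neq0$ and it enters the $H^1$ energy balance), which is consistent with the fact that none of the constants in the stated lemma degenerate as $\vert\alpha_1+\alpha_2\vert\to\sqrt{24\nu\beta}$. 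Second, you dismiss as sign bookkeeping what is actually the technical heart of the cited proof: integrating the cubic term by parts against $\alpha_1\mathbb{P}\Delta y$ produces boundary integrals that do \emph{not} vanish under the Navier-slip conditions; they are controllable only because $\Delta y$ is ``almost tangent'' to $\partial D$, i.e.\ its boundary trace is expressible through first-order derivatives of $y$ (\cite[Prop.~2]{Busuioc}, emphasized in the remark immediately following Lemma \ref{Lemma-H3} as the very reason the $H^2$ theory works for Navier but fails for Dirichlet conditions). A proof that asserts these boundary terms simply disappear, or never estimates them, has skipped the step on which the whole lemma rests.
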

\begin{lemma}\label{Lemma-H3}($H^3$  estimates)  Let $t\geq 0$, then there exists $\epsilon_0(t):=\epsilon_0(t,\Vert U\Vert_{L^2(0,t;H^1(D))})>0$ such that 
	\begin{align}\label{state-H3-estimate}
	\Vert y(t) \Vert_{H^3}^2 \leq 4^{\dfrac{1}{\epsilon_0(t)}}\big(1+ \Vert y_0\Vert_{H^{3}}^2\big):=M_4^2(t).
	\end{align}
\end{lemma}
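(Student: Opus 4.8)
The plan is to reduce the $H^3$ bound to an $L^2$ estimate on the scalar field $z:=\text{curl}\,v(y)$, exploiting the transport structure characteristic of differential-type fluids. First I would record the elliptic-regularity equivalence underlying the $\widetilde{W}$ inner product, namely $\Vert y\Vert_{H^3}^2\le C\big(\Vert y\Vert_{H^1}^2+\Vert \text{curl}\,v(y)\Vert_2^2\big)$ for $y\in\widetilde{W}$; combined with the already established $H^1$ bound $M_0(t)$, this shows that it suffices to control $\Vert z(t)\Vert_2^2$.

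Next I would apply the two–dimensional curl to \eqref{I} to eliminate the pressure. The decisive point is the algebraic identity, valid for divergence-free $y$,
\[
\text{curl}\Big[(y\cdot\nabla)v+\sum_{j}v^j\nabla y^j\Big]=y\cdot\nabla z,
\]
which turns the inertial terms into a pure transport of $z$. Writing $\omega=\text{curl}\,y$ and using $z=\omega-\alpha_1\Delta\omega$ to rewrite $-\nu\Delta\omega=\frac{\nu}{\alpha_1}(z-\omega)$, one obtains the damped transport equation
\[
\partial_t z+ (y\cdot\nabla) z+\frac{\nu}{\alpha_1}z=\frac{\nu}{\alpha_1}\omega+\beta\,\text{curl}\,\text{div}(|A|^2A)+(\alpha_1+\alpha_2)\text{curl}\,\text{div}(A^2)+\text{curl}\,U.
\]
Testing against $z$ and integrating over $D$, the transport term vanishes since $\text{div}\,y=0$ and $y\cdot\eta=0$ on $\partial D$, while the damping produces the good term $\frac{\nu}{\alpha_1}\Vert z\Vert_2^2$, leaving
\[
\tfrac12\frac{d}{dt}\Vert z\Vert_2^2+\frac{\nu}{\alpha_1}\Vert z\Vert_2^2=\frac{\nu}{\alpha_1}(\omega,z)+\beta\big(\text{curl}\,\text{div}(|A|^2A),z\big)+(\alpha_1+\alpha_2)\big(\text{curl}\,\text{div}(A^2),z\big)+(\text{curl}\,U,z).
\]

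The routine terms are handled directly: $\frac{\nu}{\alpha_1}(\omega,z)$ and $(\text{curl}\,U,z)$ by Cauchy–Schwarz and Young, absorbing the $\Vert z\Vert_2^2$ contribution into the damping and estimating $\Vert\omega\Vert_2\le M_0(t)$ and $\Vert\text{curl}\,U\Vert_2$ by the $L^2(0,t;H^1(D))$-data; the quadratic term $(\alpha_1+\alpha_2)(\text{curl}\,\text{div}(A^2),z)$ is integrated by parts once and controlled with the $W^{1,4}$ and $L^\infty$ bounds $M_1,M_2$. I expect the genuine obstacle to be the cubic term $\beta(\text{curl}\,\text{div}(|A|^2A),z)$: here three derivatives fall on a cubic nonlinearity, so I would integrate by parts to shift a derivative off $z$ (equivalently pair it against $\nabla\,\text{curl}^{-1}z$) and then distribute the remaining derivatives by a two–dimensional Gagliardo–Nirenberg/Agmon interpolation, crucially feeding in the dissipation integrals $\int_0^t\Vert |A|^2\Vert_{H^1}^2\,ds$ and $\int_0^t\!\int_D|A|^2|\nabla A|^2\,dx\,ds$ furnished by the $H^2$ lemma, so as to keep the power of $\Vert z\Vert_2$ at most quadratic.

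Collecting the estimates yields a linear differential inequality $\frac{d}{dt}\big(1+\Vert y\Vert_{H^3}^2\big)\le a(t)\big(1+\Vert y\Vert_{H^3}^2\big)$, where $a$ is an explicit combination of $M_0,M_1,M_2,M_3$ and the $H^1$-norm of $U$. Setting $\epsilon_0(t):=(\log 4)\big(\int_0^t a(s)\,ds\big)^{-1}$, which depends on $t$ and $\Vert U\Vert_{L^2(0,t;H^1(D))}$ through $a$, Gronwall's lemma gives $1+\Vert y(t)\Vert_{H^3}^2\le 4^{1/\epsilon_0(t)}\big(1+\Vert y_0\Vert_{H^3}^2\big)$, which is exactly \eqref{state-H3-estimate}. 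The only delicate step is the cubic estimate described above; the remainder is bookkeeping with the lower-order bounds already in hand.
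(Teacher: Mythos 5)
First, a remark on the comparison itself: the paper does \emph{not} prove Lemma \ref{Lemma-H3} --- it is quoted, together with the $H^1$ and $H^2$ estimates, from \cite{Bus-Ift-2} --- so your attempt can only be measured against the argument in that reference. Your skeleton is indeed the right one and matches the known strategy: the equivalence $\Vert y\Vert_{H^3}^2\le C\big(\Vert y\Vert_{H^1}^2+\Vert \text{curl}\,v(y)\Vert_2^2\big)$, the identity $\text{curl}\big[(y\cdot\nabla)v+\sum_j v^j\nabla y^j\big]=y\cdot\nabla z$ for $z=\text{curl}\,v(y)$, the damped transport equation with $-\nu\Delta\omega=\tfrac{\nu}{\alpha_1}(z-\omega)$, the vanishing of the transport term when testing by $z$, and a \emph{linear} Gronwall inequality whose factor $e^{\int_0^t a}$ is rewritten as $4^{1/\epsilon_0(t)}$ (this is the only way to get a bound that is linear in $1+\Vert y_0\Vert_{H^3}^2$, as \eqref{state-H3-estimate} claims). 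All of this is correct.

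The gap sits exactly where you place "the genuine obstacle", and the tools you propose cannot close it. The top-order parts of $\beta\,\text{curl}\,\text{div}(|A|^2A)$ (and of $(\alpha_1+\alpha_2)\text{curl}\,\text{div}(A^2)$) are terms of the form $|A|^2\nabla^2A$ paired against $z$. Integration by parts cannot "shift a derivative off $z$": $z$ is only in $L^2$, so any integration by parts creates $\nabla z\sim\nabla^4 y$, which is not controlled (your variant of pairing against $\nabla\,\text{curl}^{-1}z\sim\nabla v$ leaves the same top-order terms untouched). Estimating directly forces the coefficient $\Vert A\Vert_{L^\infty}^2$, which in 2D is \emph{not} time-integrable from the available information: $H^1\not\hookrightarrow L^\infty$, the dissipation $\int_0^t\Vert |A|^2\Vert_{H^1}^2\,ds$ does not control $\int_0^t\Vert |A|^2\Vert_{L^\infty}\,ds$, and $\int_0^t\!\int_D|A|^2|\nabla A|^2$ involves only first derivatives of $A$, so it cannot dominate $\int|A|^2|\nabla^2A|^2$ either. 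If instead you interpolate, Agmon gives $\Vert A\Vert_{\infty}^2\lesssim\Vert A\Vert_2\Vert A\Vert_{H^2}\lesssim M_0(M_0+\Vert z\Vert_2)$ and hence a contribution $\sim\Vert z\Vert_2^3$, i.e. $X^{3/2}$ with $X=1+\Vert z\Vert_2^2$, which Gronwall cannot close globally; Brezis--Gallou\"et gives $X'\lesssim a(t)X\log(e+X)$, and Osgood then yields $X(t)\le\big(eX(0)\big)^{\exp\int_0^t a}$, a \emph{power} of the data rather than the linear form claimed in \eqref{state-H3-estimate}. So with Gagliardo--Nirenberg/Agmon plus the $H^2$ dissipation alone, the power of $\Vert z\Vert_2$ cannot be kept quadratic, and your asserted inequality $\frac{d}{dt}\big(1+\Vert y\Vert_{H^3}^2\big)\le a(t)\big(1+\Vert y\Vert_{H^3}^2\big)$ is unobtainable. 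The missing idea is structural: since $\text{div}\,A=\Delta y$ for divergence-free $y$, the worst part of $\text{curl}\,\text{div}(|A|^2A)$ is $|A|^2\Delta\omega$, and one must substitute $\Delta\omega=\tfrac{1}{\alpha_1}(\omega-z)$ \emph{inside the nonlinearity}, not only in the viscous term. Tested against $z$, this produces the signed damping $-\tfrac{\beta}{\alpha_1}\int_D|A|^2z^2\,dx\le 0$ plus the harmless $\tfrac{\beta}{\alpha_1}\int_D|A|^2\omega z\,dx$; the new quartic dissipation is then what absorbs, via Young's inequality, all remaining cubic remainders (bounded by $\delta\int_D|A|^2z^2+C\Vert\nabla A\Vert_{L^4}^4$, with $\Vert\nabla A\Vert_{L^4}^4\lesssim\Vert y\Vert_{H^2}^2\Vert y\Vert_{H^3}^2\lesssim M_3\,X$ by Ladyzhenskaya), as well as the $(\alpha_1+\alpha_2)$-terms --- note $|\alpha_1+\alpha_2|\le\sqrt{24\nu\beta}$ is what guarantees $\beta>0$ whenever that term is present. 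Only with this absorption does one reach a linear inequality $X'\le a(t)X$ and hence \eqref{state-H3-estimate}. A further small inaccuracy: $M_1$ and $M_2$ bound $y$ in $L^4(0,t;W^{1,4})$ and $L^1(0,t;L^\infty)$, i.e. they control $y$ and $\nabla y$ in those mixed norms but not $\Vert A\Vert_{L^\infty}$ or $\Vert\nabla A\Vert_{L^4}$, so even the quadratic term is not "routine" with those constants.
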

\begin{remark}It is worth noting that Theorem \ref{THm1} holds with no   smallness conditions on the data, where Navier slip boundary conditions play a crucial role to prove Theorem \ref{THm1}. Indeed,  $\eqref{I}_3$ ensures that $\Delta y$ is almost	tangent to the boundary, in the sense that it can be expressed in terms of derivatives of	order $1$ of $y$ (see \cite[Prop. 2]{Busuioc}). Thanks to the last fact, we can perform some integrations by parts, which yields some boundary terms which can estimated in a satisfactory manner. More precisely, one can prove  that the pressure term vanishes and then  $H^2$-estimates for $y$ was obtained. In contrast with homogeneous Dirichlet boundary condition, where $\Delta y$ does not enjoy such property on the boundary  and then Theorem \ref{THm1} does not holds with homogeneous Dirichlet boundary condition. Concerning third grade fluids equation on bounded domain with homogeneous Dirichlet boundary condition, one can prove only local existence and uniqueness for large data and    global existence and uniqueness of solutions for small initial data in $H^3$ in 2D case ( see e.g. \cite{AC97});   the global existence of solutions for large data in this case remains an open problem. Finally,  in the  2D case, the $H^3$ regularity is shown to be propagated by the equation by using some Sobolev embeeding inequalities, which are true only in the 2D setting, we refer to  \cite{Bus-Ift-2} for more details. 		
	\end{remark}
Now we introduce the following modified Stokes problem (see \cite{ CC18,S73} for the properties of the solution)
\begin{equation}
\label{Stokes}
\left\{\begin{array}{ll}
h-\alpha_1\Delta h+\nabla \pi=f, \quad 
\text{div}(h)=0 \quad &\text{in } D,\\[2mm]
h\cdot \eta=0, \quad [\eta \cdot \mathbb{D}(h)]\cdot \tau=0  \quad &\text{on } \partial D,	
\end{array}
\right.
\end{equation}
and denotes its solution  $h$  by $h=(I-\alpha_1\mathbb{P}\Delta)^{-1}f$.
%\vspace{2mm}\\
We also consider the trilinear form 
$$
b(\phi,z,y)=(\phi\cdot \nabla z,y)=\int_D(\phi\cdot \nabla z)\cdot y dx, \quad \forall \phi,z,y \in (H^1(D))^2,$$
which verifies $b(y,z,\phi)=-b(y,\phi,z),\quad\forall y \in V; \forall z,\phi \in H^1(D)$.
\vspace{2mm}\\
Let $T >0$,  we assume that the initial data $y_0$ and the force $U$ satisfy 
\begin{align}\label{H1}\tag{H-1}
y_0\in \widetilde{W},\quad U\in L^2(0,T;(H^1(D))^2)	.
\end{align}
\subsection{Control problem}

Our main goal is to control the solution of the equation \eqref{I} 
by a distributed force $U$.
The control variables $U$ belong to the set $\mathcal{U}_{ad}$ of  admissible controls, which is defined as a  nonempty bounded closed convex subset of $L^2(0,T;(H^1(D))^2)$.  In other words
$$ \mathcal{U}_{ad}:=\{ u \in L^2(0,T;(H^1(D))^2):\quad \Vert u \Vert_{L^2(0,T;(H^1(D))^2)}\leq K \}; \quad  0<K<\infty. $$
We consider the cost functional   given by
\begin{align}\label{cost-uniqueness}
J(u,y)=\dfrac{1}{2}\int_0^T\Vert y-y_d\Vert_2^2dt+\dfrac{\lambda}{2} \int_0^T\Vert u\Vert_2^2dt,
\end{align}
where  $y_d \in L^2(D\times (0,T))$ corresponds to  a desired target field and  $\lambda \geq 0$.
 The control problem reads
\begin{align}\label{Problem-uniq}%\tag{$\mathcal{Q}$}
\displaystyle\min_{u \in \mathcal{U}_{ad}}
\bigg\{ \dfrac{1}{2}\int_0^T\Vert y-y_d\Vert_2^2dt+\dfrac{\lambda}{2} \int_0^T\Vert u\Vert_2^2dt: \; y \text{ is the solution of \eqref{I}
	with  force }  u     \bigg\}.
\end{align}

	\begin{remark}
		We wish to draw the reader’s attention  that
	  we can consider more general class of   cost functional given by
		\begin{align*}%\label{cost}
		J(U,y)=\int_0^TL(t,U(t),y(t))dt,
		\end{align*}
		where the Lagrangian  $L:[0,T]\times (H^1(D))^2\times \widetilde{W} \to \mathbb{R}^+$  satisfies   some properties ( see  e.g. \cite{CC18}).
	\end{remark}
\begin{remark}
	\begin{enumerate}
		\item 
		In this article,  the Lagrangian $L$ is given by
$L(\cdot,u,y)=\dfrac{1}{2}\Vert y-y_d\Vert_{2}^2+\dfrac{\lambda}{2}\Vert u\Vert_{2}^2. $
Therefore, we have 
\begin{align*}
	\int_0^T(\nabla_yL(t, U(t),y(t)),v(t))dt&=\int_0^T(v(t),y(t)-y_d(t))dt ,\\ \int_0^T(\nabla_uL(t, U(t),y(t)),v(t))dt&=\lambda\int_0^T(v(t),U(t))dt,
\end{align*}
 for any $v\in L^2(0,T;(L^2(D))^2)$.
\\
\item  We wish to draw the reader’s attention to the fact that  \eqref{cost-uniqueness} is well defined for $u \in L^2(0,T;(L^2(D))^2)$ but our aim is to solve \eqref{Problem-uniq} and establish an optimality condition. The $H^3$-regularity of the solution to the state equation \eqref{I} play a crucial role in the analysis of the linearized and adjoint equations (see Sect.\ref{Linearized-section} and Sect.\ref{S7}). Moreover, it is also crucial to establish the necessary optimality condition (see Sect.\ref{S6}). For that, we consider $\mathcal{U}_{ad}$ as a subset of $L^2(0,T;(H^1(D))^2),$ which ensures the $H^3$-regularity of the solution to  \eqref{I}, see Lemma \ref{Lemma-H3}.
\end{enumerate}
\end{remark}

\begin{remark}
	Under \eqref{H1},  the solution $y$ of \eqref{I}, belongs to $L^\infty(0,T;\widetilde{W})$.
\end{remark}
\subsection{Main results} Our first main result shows the existence  of a solution to the control problem, and  establishes the first order optimality conditions. 
\begin{theorem}\label{main-thm} %%%% and $H_{2,1}-H_{2,4}$(or \eqref{Problem})
	Assume \eqref{H1}. Then the control problem \eqref{Problem-uniq}  admits, at least, one optimal solution 
	$$(\tilde{U},\tilde{y}) \in \mathcal{U}_{ad} \times  \big(L^\infty(0,T;\widetilde{W})) \cap H^1(0,T;V) \big ), $$ 
	where $\tilde{y}$ is the unique solution of \eqref{I} with $U=\tilde{U}$. Moreover, there exists a unique solution  $\tilde{p}$ of \eqref{adjoint} with $f=\nabla_yL(\cdot, \tilde{U},\tilde{y})$, such that if $\tilde{z}$  is the solution of \eqref{Linearized} for $y=\tilde{y}$ and $\psi=\psi-\tilde{U}$,  the following duality property 
	$$\int_0^T(\psi(t)-\tilde{U}(t),\tilde p(t))dt=\int_0^T(\nabla_yL(t,\tilde U(t), \tilde y(t)) ,\tilde z(t))dt, $$	and the following optimality condition 	hold
	\begin{equation}
	\label{oocc}
	\int_0^T(\psi(t)-\tilde{U}(t),\tilde p(t)+\nabla_uL(t,\tilde U(t), \tilde y(t)))dt \geq 0.
	\end{equation}
\end{theorem}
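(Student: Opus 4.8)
The plan is to establish the three assertions of Theorem~\ref{main-thm} in the natural order: first the existence of an optimal pair, then the well-posedness of the adjoint state, and finally the duality relation together with the resulting variational inequality. For the \emph{existence} part I would use the direct method of the calculus of variations. Since $\mathcal{U}_{ad}$ is nonempty, the functional $J$ is bounded below by $0$, so one can pick a minimizing sequence $(U_n,y_n)$ where $y_n$ solves \eqref{I} with force $U_n$. Because $\mathcal{U}_{ad}$ is a bounded, closed, convex subset of the Hilbert space $L^2(0,T;(H^1(D))^2)$, it is weakly sequentially compact, so after passing to a subsequence $U_n \rightharpoonup \tilde U \in \mathcal{U}_{ad}$. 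The uniform \apriori bounds $M_0,\dots,M_4$ collected in the three estimate lemmas depend on $U_n$ only through $\Vert U_n\Vert_{L^2(0,T;H^1)}\leq K$, hence give uniform bounds on $y_n$ in $L^\infty(0,T;\widetilde{W})$ and, using the equation, on $\partial_t v(y_n)$ in a suitable dual space. An Aubin--Lions compactness argument then yields strong convergence $y_n\to\tilde y$ in, say, $L^2(0,T;W)$, which is more than enough to pass to the limit in every nonlinear term of \eqref{I} and identify $\tilde y$ as the solution associated with $\tilde U$. Weak lower semicontinuity of the two quadratic terms in $J$ finishes this step.

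For the \emph{adjoint} state, I would invoke the well-posedness result for the adjoint equation \eqref{adjoint} stated in Section~\ref{S7} (which I may assume), applied with the specific right-hand side $f=\nabla_yL(\cdot,\tilde U,\tilde y)=\tilde y-y_d$. Since $\tilde y\in L^\infty(0,T;\widetilde W)$ and $y_d\in L^2(D\times(0,T))$, this $f$ lies in the admissible class for that theorem, so there is a unique $\tilde p$. Likewise the linearized equation \eqref{Linearized} linearized at $y=\tilde y$ is well-posed by the results of Section~\ref{S4}, so for each admissible direction the solution $\tilde z$ exists and is unique.

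The core of the argument is the \emph{duality relation and the optimality condition}. Fix $\psi\in\mathcal{U}_{ad}$ and set the admissible direction $\psi-\tilde U$; let $\tilde z$ solve \eqref{Linearized} with this forcing. The duality identity
\[
\int_0^T(\psi(t)-\tilde U(t),\tilde p(t))\,dt=\int_0^T(\nabla_yL(t,\tilde U(t),\tilde y(t)),\tilde z(t))\,dt
\]
is obtained by testing the linearized equation against $\tilde p$ and the adjoint equation against $\tilde z$, then subtracting: the bilinear and trilinear coupling terms cancel by design of the adjoint, the time-derivative terms combine into a total derivative that integrates to zero (the linearized and adjoint states carry compatible initial/terminal conditions), and what survives on one side is exactly the forcing $\psi-\tilde U$ paired with $\tilde p$ and on the other the source $\nabla_yL$ paired with $\tilde z$. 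To derive \eqref{oocc}, I would use the convexity of $\mathcal{U}_{ad}$: for $\theta\in(0,1]$ the point $\tilde U+\theta(\psi-\tilde U)$ is admissible, so optimality of $(\tilde U,\tilde y)$ gives $J(\tilde U+\theta(\psi-\tilde U),y_\theta)\geq J(\tilde U,\tilde y)$. Dividing by $\theta$ and letting $\theta\to 0^+$ produces the directional derivative $\int_0^T(\nabla_yL,\tilde z)\,dt+\int_0^T(\nabla_uL,\psi-\tilde U)\,dt\geq 0$, where I use the result from Section~\ref{S6} that $\tilde z$ is the G\^ateaux derivative of the control-to-state map in direction $\psi-\tilde U$. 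Substituting the duality relation for the first integral and recalling $\nabla_uL=\lambda\tilde U$ converts this into \eqref{oocc}.

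The main obstacle I anticipate is the differentiability step feeding into the variational inequality: one must justify that the difference quotients $\theta^{-1}(y_\theta-\tilde y)$ converge to $\tilde z$ strongly enough to pass to the limit in $\frac12\Vert y_\theta-y_d\Vert_2^2$, and that the remainder in the expansion of $J$ is $o(\theta)$. This hinges precisely on the stability estimate of Section~\ref{S5} and the identification of $\tilde z$ as the G\^ateaux derivative in Section~\ref{S6}, so the delicacy lies in invoking those results with uniform-in-$\theta$ bounds rather than in the convexity argument itself. The algebraic cancellations in the duality identity are routine once the adjoint system is correctly set up, but they must be carried out carefully to ensure every nonlinear term linearized at $\tilde y$ finds its transpose in the adjoint operator.
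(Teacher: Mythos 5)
Your proposal is correct and follows essentially the same route as the paper: direct method with weak/weak-$*$ compactness and lower semicontinuity of $J$ for existence, the well-posedness results of Sections~\ref{S4} and~\ref{S7} for $\tilde z$ and $\tilde p$, the stability-based G\^ateaux differentiability of Section~\ref{S6} to compute the directional derivative of $J$, and the duality identity to convert it into \eqref{oocc}. The only cosmetic difference is that you perform the duality cancellation directly on $\tilde z$ and $\tilde p$ (and invoke Aubin--Lions explicitly for the limit passage in the state equation), whereas the paper carries out the duality algebra at the Galerkin level ($z_n$, $p_n$) and then passes to the limit; both are legitimate implementations of the same argument.
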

An additional  step in the study of the control problem  relies on the analysis of the solutions of the coupled system  constituted by the state equation \eqref{I}, the adjoint equation \eqref{adjoint}  and the optimality relation \eqref{oocc}.
Our next result goes in this direction and establishes an uniqueness result for the solutions of the coupled system for \eqref{Problem-uniq}.
\begin{theorem}\label{Thm-uniq}
	Assume that  $\lambda >2 \widetilde{C}\widetilde{\lambda}(\Gamma+4\kappa(\alpha_1+\alpha_2)+12\kappa\beta\gamma )$, where $\gamma=\sup_{t\in [0, T]}M_4(t)$ (see \eqref{state-H3-estimate}) and $\widetilde{\lambda},\widetilde{C}, \kappa$ are given by \eqref{estimate-adjoint}, \eqref{stability-estimate}, \eqref{embedding-constant-uniq}, respectively.
	Then 	the optimal control problem \eqref{Problem-uniq} has a unique global solution. 
\end{theorem}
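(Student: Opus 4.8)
The plan is to prove uniqueness at the level of the coupled system \eqref{I}--\eqref{adjoint}--\eqref{oocc} and then transfer it to the optimizer. Suppose that $(U_1,y_1,p_1)$ and $(U_2,y_2,p_2)$ are two solutions of this system, set $W=U_1-U_2$, $Y=y_1-y_2$, $Q=p_1-p_2$, and recall that for the quadratic cost \eqref{cost-uniqueness} one has $\nabla_uL(\cdot,u,y)=\lambda u$. Writing the optimality relation \eqref{oocc} for the first solution with the admissible direction $\psi=U_2$ and for the second with $\psi=U_1$, I obtain
\[
\int_0^T(U_2-U_1,p_1+\lambda U_1)\,dt\geq 0,\qquad \int_0^T(U_1-U_2,p_2+\lambda U_2)\,dt\geq 0 .
\]
Adding these inequalities and using $(U_2-U_1,U_1-U_2)=-\Vert W\Vert_2^2$ gives the fundamental estimate
\[
\lambda\int_0^T\Vert W\Vert_2^2\,dt\leq -\int_0^T(W,Q)\,dt\leq \Big(\int_0^T\Vert W\Vert_2^2\,dt\Big)^{1/2}\Big(\int_0^T\Vert Q\Vert_2^2\,dt\Big)^{1/2}.
\]
Everything then reduces to bounding $\Vert Q\Vert_{L^2(D\times(0,T))}$ by $\Vert W\Vert_{L^2(D\times(0,T))}$ with a constant strictly below $\lambda$.

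To this end I would derive the equation solved by $Q$. Subtracting the two copies of the adjoint equation \eqref{adjoint} (posed at $y_1$ and $y_2$), the difference $Q$ solves a backward linear problem of the same type as \eqref{adjoint}, linearized about $y_1$, whose forcing is the sum of the source difference $\nabla_yL(\cdot,U_1,y_1)-\nabla_yL(\cdot,U_2,y_2)=Y$ and the commutator terms $(\mathcal{L}_{y_1}-\mathcal{L}_{y_2})p_2$ generated by the state-dependence of the adjoint operator. Applying the a priori estimate \eqref{estimate-adjoint} for \eqref{adjoint} yields
\[
\Vert Q\Vert_{L^2(D\times(0,T))}\leq \widetilde{\lambda}\,\big(\Vert Y\Vert+\Vert(\mathcal{L}_{y_1}-\mathcal{L}_{y_2})p_2\Vert\big),
\]
where $\Vert Y\Vert$ stands for the strong state-difference norm controlled by the stability result \eqref{stability-estimate}, namely $\Vert Y\Vert\leq\widetilde{C}\,\Vert W\Vert_{L^2(D\times(0,T))}$.

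It remains to control the commutator, which is where the viscoelastic structure enters. The quadratic stress $-(\alpha_1+\alpha_2)\,\mathrm{div}(A^2)$ produces contributions linear in $A(Y)$, while the cubic stress $-\beta\,\mathrm{div}(|A|^2A)$ produces contributions carrying the factor $A(y_1)+A(y_2)$ times $A(Y)$. Estimating these products in $L^2(D\times(0,T))$ by Hölder's inequality and the Sobolev embedding \eqref{embedding-constant-uniq} with constant $\kappa$, and using the uniform $H^3$ bound $\gamma=\sup_{t}M_4(t)$ of Lemma \ref{Lemma-H3} to control $A(y_1)+A(y_2)$ and $p_2$ in the relevant norms, one gets the coefficients $4\kappa(\alpha_1+\alpha_2)$ and $12\kappa\beta\gamma$ (the extra factor $\gamma$ reflecting the quadratic dependence of the cubic term on $\nabla y$), together with a contribution $\Gamma$ collecting the linear and convective part and the source $Y$. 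Combining with \eqref{stability-estimate} gives
\[
\Vert Q\Vert_{L^2(D\times(0,T))}\leq 2\widetilde{C}\widetilde{\lambda}\big(\Gamma+4\kappa(\alpha_1+\alpha_2)+12\kappa\beta\gamma\big)\,\Vert W\Vert_{L^2(D\times(0,T))}.
\]
Plugging this into the fundamental estimate and dividing by $\Vert W\Vert_{L^2(D\times(0,T))}$ forces $W=0$ precisely under the hypothesis $\lambda>2\widetilde{C}\widetilde{\lambda}(\Gamma+4\kappa(\alpha_1+\alpha_2)+12\kappa\beta\gamma)$. Then $U_1=U_2$, and uniqueness of the state (Theorem \ref{THm1}) and of the adjoint (Section \ref{S7}) give $y_1=y_2$ and $p_1=p_2$; since any optimal control delivered by Theorem \ref{main-thm} solves the coupled system, \eqref{Problem-uniq} has a unique global solution.

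The main obstacle I anticipate is the quantitative estimate of the commutator $(\mathcal{L}_{y_1}-\mathcal{L}_{y_2})p_2$. In contrast with the Newtonian or second grade settings, the cubic term $-\beta\,\mathrm{div}(|A|^2A)$ makes the adjoint operator depend \emph{quadratically} on $\nabla y$, so its difference at $y_1$ and $y_2$ involves products of three gradient factors (two from the states, one from $Y$) paired against $p_2$. Keeping these in $L^2(D\times(0,T))$ while extracting clean constants demands the full $H^3$ regularity encoded in $\gamma$ and a careful balance of Hölder exponents and embeddings behind $\kappa$; this is exactly where the numerical coefficients $4$ and $12$, and the factor $2$, in the threshold for $\lambda$ are produced.
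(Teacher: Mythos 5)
Your opening step coincides with the paper's: testing \eqref{oocc} for each solution with the other control and adding the two inequalities gives $\lambda\int_0^T\Vert U_1-U_2\Vert_2^2\,dt\leq-\int_0^T(U_1-U_2,p_1-p_2)\,dt$. From there, however, you diverge, and the route you choose has a genuine gap. You propose to write an equation for $Q=p_1-p_2$ (difference of the two adjoint equations, linearized about $y_1$, with forcing $Y$ plus the commutator $(\mathcal{L}_{y_1}-\mathcal{L}_{y_2})p_2$) and to bound $\Vert Q\Vert_{L^2(D\times(0,T))}$ by applying the linear a priori estimate for \eqref{adjoint} to that forcing measured in $L^2(D\times(0,T))$. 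This fails with the regularity actually available. The commutator contains, among others, the terms $\mathrm{curl}\,v(Y)\times p_2$ and $\mathrm{curl}\,v(Y\times p_2)$, which carry \emph{three} derivatives of $Y=y_1-y_2$ (and of $p_2$); the stability estimate \eqref{stability-estimate} controls $Y$ only in the $W$-norm (an $H^2$-type norm), and neither the paper nor your argument bounds $\Vert Y\Vert_{H^3}$ by $\Vert U_1-U_2\Vert_{L^2}$ --- the individual bound $\Vert y_i\Vert_{H^3}\leq\gamma$ carries no smallness. Even the lower-order pieces are not in $L^2$: $\mathrm{div}\big[A(Y)A(p_2)+A(p_2)A(Y)\big]$ consists of products of type $\nabla^2Y\cdot\nabla p_2$ and $\nabla Y\cdot\nabla^2p_2$, and with only $Y\in H^2$ and $p_2\in H^2$ (which is all that Proposition \ref{exis-adjoint} provides; $H^3$ regularity of the adjoint is never established) such products lie in $L^r$ with $r<2$ but not in $L^2$. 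So the quantity $\Vert(\mathcal{L}_{y_1}-\mathcal{L}_{y_2})p_2\Vert_{L^2}$ on which your estimate hinges need not even be finite, let alone be bounded by $C\Vert U_1-U_2\Vert_{L^2}$. A related conflation: in the paper $\widetilde\lambda$ is a bound on $\sup_t\Vert p_i\Vert_W$ for the specific source $y_i-y_d$ (see \eqref{estimate-adjoint}), not the operator norm of the solution map for \eqref{adjoint}, so the exact threshold you claim to recover does not come out of the computation you sketch.

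The paper's proof is structured precisely to avoid estimating $Q$. It never writes an equation for $p_1-p_2$: instead it tests the equation \eqref{dif1} for $Y$ against $p_2$ (and, with $-Y$, against $p_1$), tests the adjoint equations against $Y$, and adds. Because the adjoint operator is the formal adjoint of the linearization, all terms \emph{linear} in $Y$ cancel identically, and what survives in \eqref{uniq.1}--\eqref{uniq.2} are only remainders \emph{quadratic} in $Y$ paired against $p_i$ --- namely $(\mathrm{curl}\,v(Y)\times Y,p_i)$, $((A(Y))^2,\nabla p_i)$ and the $\beta$-terms --- together with $\int_0^T\Vert Y\Vert_2^2\,dt$ and the desired term $-\int_0^T(U_1-U_2,p_1-p_2)\,dt$. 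These cubic remainders can be integrated by parts (this is what the constant $\Gamma$ encodes) and estimated using only $Y\in H^2$, which \emph{is} small by \eqref{stability-estimate}, and $p_i\in H^2$, bounded by $\widetilde\lambda$ via \eqref{estimate-adjoint}; this is how the coefficients $\Gamma$, $4\kappa(\alpha_1+\alpha_2)$, $12\kappa\beta\gamma$ genuinely arise. That cancellation --- quadratic smallness in $Y$ instead of a linear forcing term for a $Q$-equation --- is the key idea your proposal is missing; without it, the third-derivative and product-regularity obstructions above are fatal to the direct approach.
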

\begin{remark}
	\begin{enumerate}
\item 	It is important to underline that Theorem \ref{Thm-uniq} is achieved under a natural condition $\lambda >2 \widetilde{C}\widetilde{\lambda}(\Gamma+4\kappa(\alpha_1+\alpha_2)+12\kappa\beta\gamma )$, which gathers the size of the initial data, the parameters of the model  $(\nu, \alpha_1,\alpha_2,\beta)$ and   the intensity
	of the cost $\lambda$. In other words, if the fluid material is sufficiently viscous and elastic and
	the initial condition is small enough, or instead if the intensity of the cost is big
	enough, the solution of the first-order optimality system is unique, and corresponds
	to the unique  solution of the optimal control problem.\\

\item 	The standard approach in optimization problem is based on the analysis of the second order sufficient condition, which requires a coercivity of the Lagrange function combined with the first order necessary optimality condition. Generally, the second order optimality condition cannot be expected if $\lambda=0$ (see e.g. \cite[Rmq. 3.16]{Tro-Wach06}). Following a similair analysis as in \cite[Subsect. 3.1]{CC22}, one can get a second-order G\^ateaux derivative  of the control-to-state-mapping, by taking into account the additional terms of the third grade fluids model. Note that  an analysis of the second-order G\^ateaux derivative  of the control-to-state-mapping will require  more regularity with respect to the space variable $x$ of the solution of linearized equation $z$ (see e.g. \cite[Sect. 4 ]{Wach-Roubi2010} for similar issues). Additionally,
	  one can  found, in the literature, that the coercivity of the Lagrange function leads to some condition (see e.g.  \cite[Th. 3.2]{CC22}), which  mimics  $\lambda >2 \widetilde{C}\widetilde{\lambda}(\Gamma+4\kappa(\alpha_1+\alpha_2)+12\kappa\beta\gamma )$.   On the other hand, by using \eqref{estimate-adjoint} one can see that the obove condition leads to  
	  \begin{align}\label{control-parameter}
	  	\lambda >\widetilde{C}C(T)(\Gamma+4\kappa(\alpha_1+\alpha_2)+12\kappa\beta\gamma )\Vert y-y_d\Vert_{L^2(Q)}.
	  \end{align}
	  \eqref{control-parameter} means that the desired state $y_d$ could be approximated closely if $\lambda$ is  large enough or $\Vert y-y_d\Vert_{L^2(Q)} $ is small enough,
which is  similair  to  the standard  second-order optimality condition related to  the control theory of  Navier-Stokes equations (see e.g. \cite[Eqn. 3.30]{Tro-Wach06}).  Finally, we  consider here the uniqueness of global system, since it does not request more regularity for $z$ and we believe that the analysis of second order optimality condition will lead to a  similar condition.
\end{enumerate}
\end{remark}
The proof of Theorem \ref{main-thm}  will be  splitted  into several steps. First, we prove the solvability of the linearized state equation. Then, we establish  a stability result for the state, which is a key ingredient to show that the solution of the linearized state equation corresponds to the 
G\^ateaux derivative of the control-to-state mapping. Next,  we will write and study the well-posedness of the adjoint equation. Finally,
we prove  the existence of an optimal pair and deduce the first order  optimality condition. The proof of Theorem \ref{Thm-uniq} is presented in Section \ref{S-uniq}.
\section{Linearized state equation}\label{Linearized-section}
\label{S4}
This section is devoted to the study of the linearized state equation.
The existence of the solution is based on the  Faedo-Galerkin's approximation method, which relies on a special basis  designed according to the structure of the equation, in order to derive the uniform estimates in $H^1$ and in $H^2$.
\vspace{2mm}\\
Let us consider $\psi:D\times ]0,T[ \to \mathbb{R}^2$ such that
\begin{align}\label{psi}
\psi\in (L^2(D\times]0,T[))^2.
\end{align}
Our goal is to prove an existence and uniqueness result for the following problem
\begin{align}
%\begin{equation}
\label{Linearized}
\left\{\begin{array}{ll}
\partial_t(v(z))-\nu \Delta z+(y\cdot \nabla )v(z)+(z\cdot \nabla)v(y)+\sum_{j}v(z)^j\nabla y^j&\\[2mm]
\quad+\sum_{j}v(y)^j\nabla z^j-(\alpha_1+\alpha_2)\text{div}[A(y)A(z)+A(z)A(y)]&\vspace{2mm}\\
\quad -\beta\text{ div} \big[ |A(y)|^2A(z)\big]
-2\beta\text{ div}\left[ \left( A(z):A(y)\right)A(y)\right]=\psi-\nabla \pi \quad &\hspace*{-0.4cm}\text{in } D\times (0,T), \\[2mm]
\text{div}(z)=0 \quad &\hspace*{-0.4cm}\text{in } D\times (0,T),\\[2mm]
z\cdot \eta=0 \quad [\eta \cdot \mathbb{D}(z)]\cdot \tau=0  \quad &\hspace*{-0.4cm}\text{on } \partial D\times (0,T),\\[2mm]
z(x,0)=0 \quad &\hspace*{-0.4cm} \text{in } D.
\end{array}
\right.
%\end{equation}
\end{align}
\begin{definition}\label{Def-lin}
	A function $z \in L^{\infty}(0,T;W)$ with $\partial_tz \in L^{2}(0,T;V)$ is a solution of \eqref{Linearized} if $z(0)=0$ and  for any $t\in [0,T]$, the following equality holds
	\begin{equation*}
	\begin{array}{ll}
	&(\partial_tv(z),\phi)+2\nu(\mathbb{D} z,\mathbb{D}\phi)+b(y,v(z),\phi)+b(z,v(y),\phi)+b(\phi,y,v(z))+b(\phi,z,v(y))\vspace{1mm}\\[1mm]
	&\quad+(\alpha_1+\alpha_2)\big(A(y)A(z)+A(z)A(y),\nabla \phi\big) 
	+\beta\big( |A(y)|^2A(z), \nabla\phi\big)\vspace{1mm}\\[1mm]&\qquad+2\beta \big((A(z):
A(y))A(y),\nabla\phi\big)= (\psi,\phi),
	\qquad \text{for all } \phi \in W.
	\end{array}
	\end{equation*}
\end{definition}
\begin{remark}
	For $u \in L^{\infty}(0,T;W)$ and  $\partial_tu \in L^{2}(0,T;V)$, 
	$(\partial_tv(u),\phi)$ will be understood in the following sense
	$$(\partial_tv(u),\phi)=(\partial_tu,\phi)_V=(\partial_tu,\phi)+2\alpha_1(\partial_t \mathbb{D}u,\mathbb{D} \phi). $$
\end{remark}
\subsection{Approximation}
Following the same strategy as in \cite{CC18}, the  solution of \eqref{Linearized}  can be obtained as a limit of the finite dimensional Faedo-Galerkin's approximations. In this way, let us consider 
an orthonormal basis $\{h_i\}_{i\in \mathbb{N}} \subset H^4(D) \cap W$  in $V$, which  satisfies
\begin{align}\label{basis1}
(v,h_i)_W=\mu_i(v,h_i)_V, \quad \forall v \in W, \quad i \in \mathbb{N},
\end{align}
where the sequence $\{\mu_i\}$ of the corresponding eigenvalues fulfils the  properties:\\ $\mu_i >0, \forall i\in \mathbb{N},$ and $\mu_i \to \infty$ as $i \to \infty$. As a consequence of \eqref{basis1}, the sequence $\{\tilde{h}_i=\frac{1}{\sqrt{\mu_i}}h_i\}$ is an orthonormal basis in $W$. Let us introduce the Galerkin approximations of \eqref{Linearized}.\\ Consider $W_n=\text{span}\{h_1,\cdots,h_n\}$ and define
$ z_n(t)=\sum_{i=1}^n c_i(t)h_i$   for each  $t\in [0,T].$
The approximated problem for \eqref{Linearized} reads $z_n(0)=0$ and 
\begin{align}\label{approximation}
\begin{cases}
(\partial_tv(z_n),\phi)= \Big(\psi+\nu \Delta z_n-(y\cdot \nabla )v(z_n)-(z_n\cdot \nabla)v(y)-\sum_{j}v(z_n)^j\nabla y^j&\\\quad-\sum_{j}v(y)^j\nabla z_n^j
+(\alpha_1+\alpha_2)\text{div}[A(y)A(z_n)+A(z_n)A(y)] &\\\quad+\beta\text{ div}\big [ |A(y)|^2A(z_n)\big]+2\beta\text{ div} \big [(A(z_n):A(y))A(y)\big],\phi\Big),    \quad \text{ for any }\; \phi \in W_n. %& \\[0.15cm]
%z_n(0)=0.&
\end{cases}
\end{align}
Note that \eqref{approximation} defines  a system of  linear ordinary differential equations, which has a unique  solution 
\begin{align}\label{local-time-lin}
z_n\in \mathcal{C}([0,T_n],W_n).
\end{align}
\subsection{Uniform  estimates}
\subsubsection{Estimate in the space $V$ for $z_n$}
Setting $\phi=h_i$ in \eqref{approximation}, we have
\begin{equation}
\begin{array}{ll}
\label{approximationz-n}
(\partial_tv(z_n),h_i)&=\Big(\psi+\nu \Delta z_n-(y\cdot \nabla )v(z_n)-(z_n\cdot \nabla)v(y)-\sum_{j}v(z_n)^j\nabla y^j\vspace{1mm}\\
&-\sum_{j}v(y)^j\nabla z_n^j+(\alpha_1+\alpha_2)\text{div}[A(y)A(z_n)+A(z_n)A(y)] \vspace{1mm}\\&+\beta\text{ div}\big [ |A(y)|^2A(z_n)\big]
+2\beta\text{ div} \left [\left(A(z_n):A(y)\right)A(y)\right],h_i\Big):=(f(z_n),h_i).
\end{array}
\end{equation}
Multiplying the equality by $c_i(t)$ and summing from $i=1$ to $n$, we deduce
\begin{align}\label{estimateV}
2(\partial_t v(z_n),z_n) =2(f(z_n),z_n).
\end{align}
Integrating by parts and using the boundary conditions, we obtain 
\begin{align*}
2(\partial_t v(z_n),z_n)&=2(\partial_t (z_n),z_n)+4\alpha_1(\partial_t \mathbb{D}z_n,\mathbb{D}z_n)=\dfrac{d}{dt}\big(\Vert z_n\Vert_2^2+2\alpha_1\Vert  \mathbb{D}z_n\Vert_2^2\big).
\end{align*}
Let us estimate the term $(f(z_n),z_n)$.
\begin{equation*}
\begin{array}{ll}
I_1&=2\Big(\psi+\nu \Delta z_n-(y\cdot \nabla )v(z_n)-(z_n\cdot \nabla)v(y)-\sum_{j}v(z_n)^j\nabla y^j-\sum_{j}v(y)^j\nabla z_n^j\vspace{1mm}\\
&+(\alpha_1+\alpha_2)\text{div}[A(y)A(z_n)+A(z_n)A(y)] +\beta\text{ div}\big [ |A(y)|^2A(z_n)\big]\vspace{1mm}\\
&+2\beta\text{ div} 
\big [\left(A(z_n):A(y)\right)A(y)\big],z_n\Big)=I_1^0+I_1^1+I_1^2+I_1^3+I_1^4.
\end{array}
\end{equation*}
Thanks to the free divergence property, we have
\begin{align*}
I_1^0=2\big(\psi +\nu \Delta z_n,z_n\big)=2\big(\psi,z_n\big)-4\nu\Vert \mathbb{D} z_n\Vert_2^2.
\end{align*}
Notice that
\begin{equation*}
\begin{array}{ll}
I_1^1
=-2\big(b(y,v(z_n),z_n)+b(z_n,y,v(z_n))+b(z_n,v(y),z_n)+b(z_n,z_n,v(y))\big).
\end{array}
\end{equation*}
Using \cite[Lem. 3.5.]{CC18} we write 
\begin{equation*}
\begin{array}{ll}
I_1^1
&=2\big( b(y,z_n,v(z_n))-b(z_n,y,v(z_n))\big)=-2(\text{curl}(v(z_n))\times y,z_n),\vspace{2mm}\\
|I_1^1|&=|2\big(\text{curl}(v(z_n))\times y,z_n\big)|\leq C_1\Vert y\Vert_{\widetilde{W}}\Vert z_n\Vert_{V}^2.
\end{array}
\end{equation*}
The Stokes theorem allows to  infer that
\begin{align*}
I_1^2&=2(\alpha_1+\alpha_2)\big(\text{div}[A(y)A(z_n)+A(z_n)A(y)],z_n\big)\\[1.5mm]
&=-2(\alpha_1+\alpha_2)[\int_D [A(y)A(z_n)+A(z_n)A(y)]:\nabla z_n dx +\int_{\partial D}(A(y)A(z_n)\eta+A(z_n)A(y)\eta)\cdot  z_n]dS.
\end{align*}
Since $y$ and $z_n$ satisfy the Navier  boundary conditions (see $\eqref{Linearized}_3$), we have
$$ \int_{\partial D}(A(y)A(z_n)\eta+A(z_n)A(y)\eta)\cdot  z_n dS=0.$$
Hence, H\"older inequality ensures
\begin{align*}
%\begin{array}{ll}
\vert I_1^2\vert &=\left\vert 2(\alpha_1+\alpha_2)\int_D [A(y)A(z_n)+A(z_n)A(y)]:\nabla z_n dx\right\vert
\vspace{1mm} \\
&\leq  C_2  \Vert z_n\Vert_{H^1}^2\Vert  y\Vert_{W^{1,\infty}}\leq C_2  \Vert z_n\Vert_{V}^2\Vert  y\Vert_{\widetilde{W}}.
%\end{array}
\end{align*}
Applying the Stokes theorem once more and using the symmetry of $A(z_n)$, we derive
\begin{align*}
%\begin{array}{ll}
I_1^3&= 2\beta\big(\text{ div} \big ( |A(y)|^2A(z_n)\big),z_n\big)=-2\beta\int_D|A(y)|^2A(z_n):\nabla z_ndx+\int_{\partial D}|A(y)|^2(A(z_n)\eta)\cdot z_ndS
\vspace{1mm}\\
&=-\beta\int_D|A(y)|^2|A(z_n)|^2dx \leq 0,
%\end{array}
\end{align*}
where we used the Navier boundary conditions to  cancel the boundary term. Concerning $I_1^4$, we have
\begin{align*}
%\begin{array}{ll}
I_1^4= 4\beta\left(\text{ div} \left[\left(A(z_n): A(y)\right)A(y)\right],z_n\right)=4\beta\int_D\text{div}\big [\left(A(z_n): A(y)\right)A(y)\big]\cdot z_ndx.
%\end{array}
\end{align*}
A similar reasoning yields
\begin{align*}
%\begin{equation*}
%\begin{array}{ll}
I_1^4= -4\beta\int_D[(A(z_n):A(y))A(y)\big]: \nabla z_ndx+4\beta\int_{\partial D}\left(A(z_n):A(y)\right)(A(y)\eta)\cdot z_n dS.
%\end{array}
%\end{equation*}
\end{align*}
Again, using the Navier boundary conditions, we  deduce
\begin{align*}
\vert I_1^4\vert&= \vert 4\beta\int_D[\left(A(z_n):A(y)\right)A(y)\big]: \nabla z_ndx\vert\leq C_3  \Vert z_n\Vert_{H^1}^2\Vert  y\Vert_{W^{1,\infty}}^2\leq C_3  \Vert z_n\Vert_{V}^2\Vert  y\Vert_{\widetilde{W}}^2.
\end{align*}
Gathering the previous estimates, there exists $C>0$ independent of $n$ such that
\begin{align*}
\vert I_1^1+I_1^2+I_1^3+I_1^4\vert \leq  C \Vert z_n\Vert_{V}^2\Vert  y\Vert_{\widetilde{W}}(1+\Vert  y\Vert_{\widetilde{W}}).
\end{align*}
For any $t\in ]0,T]$, we integrate over the time variable to obtain
\begin{align*}
%\begin{equation*}
%\begin{array}{ll}
\big(\Vert z_n(t)\Vert_2^2+2\alpha_1\Vert  \mathbb{D}z_n(t)\Vert_2^2\big)
+4\nu\int_0^t\Vert \mathbb{D} z_n\Vert_2^2ds&\leq C\int_0^t   \Vert z_n\Vert_{V}^2\Vert  y\Vert_{\widetilde{W}}(1+\Vert  y\Vert_{\widetilde{W}}) ds+C\int_0^{t}\Vert \psi(s)\Vert_{2}^2ds.
%\end{array}
%\end{equation*}
\end{align*}
Thanks to Lemma \ref{Lemma-H3}, for any $t\in ]0,T]$, we have
%\begin{equation*}
%\begin{array}{ll}
\begin{align*}
&\sup_{r\in [0, t]}\Vert z_n(r)\Vert_V^2+4\nu\int_0^t\Vert \mathbb{D} z_n(s)\Vert_2^2ds\leq  C \sup_{r\in [0, t]}[M_4(r)+M_4^2(r)]\int_0^t  \Vert z_n(s)\Vert_{V}^2 ds+C\int_0^{t}\Vert \psi(s)\Vert_{2}^2ds.
\end{align*}
%\end{array}
%\end{equation*}
Now, the Gronwall's inequality gives that  $(z_n)_n$  is bounded in $L^\infty(0,T;V)$, i.e.
\begin{equation}
\label{Vest-Zn}
%\begin{array}{ll}
\sup_{r\in [0, t]}\Vert z_n(r)\Vert_V^2+4\nu \int_0^{ t}\Vert 
\mathbb{D} z_n(s)\Vert_2^2ds\leq C(T)\int_0^{t}\Vert \psi(s)\Vert_{2}^2ds.
%\end{array}
\end{equation}
\subsubsection{Estimate in the space $W$ for $z_n$}
Let $\tilde{f}_n$ be the solutions of \eqref{Stokes} for $f=f(z_n)$. Then 
\begin{align}
(\tilde{f}_n,h_i)_V=(f(z_n),h_i),\quad \text{  for each } i.
\end{align}
Multiplying \eqref{approximationz-n} by $\mu_i$ and using \eqref{basis1}, we get
\begin{align}
(\partial_tz_n,h_i)_W=(\tilde{f}_n,h_i)_W,
\end{align}
Multiplying these equalities by $c_i(t)$ and summing from $i=1$ to $n$, it follows that
\begin{equation*}
\begin{array}{ll}
(\partial_tz_n,z_n)_V+(\mathbb{P} v(\partial_tz_n),\mathbb{P} v(z_n))&= (\partial_tz_n,z_n)_W=(\tilde{f}_n,z_n)_V+(\mathbb{P} v\tilde{f}_n,\mathbb{P} v(z_n))
\vspace{1mm}\\%=(\tilde{f}_n,z_n)_W\\
%&=(v(\tilde{f}_n),z_n)+( v(\tilde{f}_n),\mathbb{P} v(z_n))
%\\
&=(f(z_n),z_n)+(f(z_n),\mathbb{P} v(z_n)).
\end{array}
\end{equation*}
By using \eqref{estimateV}, the last equality reduces to 
\begin{equation*}
\begin{array}{ll}
\dfrac{d}{dt}\Vert\mathbb{P} v(z_n)\Vert_{2}^2=2(\partial_t\mathbb{P} v(z_n),\mathbb{P} v(z_n))=2(f(z_n),\mathbb{P} v(z_n)).
\end{array}
\end{equation*}
Let us estimate $2( f(z_n),\mathbb{P}v(z_n))$. We write
\begin{equation*}
\begin{array}{ll}
2( f(z_n),\mathbb{P}v(z_n))&=2\big(\psi-\nabla \pi_n+\nu \Delta z_n, \mathbb{P}v(z_n)\big)+2\beta\big(\text{ div}\big [ |A(y)|^2A(z_n)\big],\mathbb{P}v(z_n)\big)\vspace{1.5mm}\\
& -2\big((y\cdot \nabla )v(z_n)+(z_n\cdot \nabla)v(y),\mathbb{P}v(z_n)\big)\vspace{1.5mm}\\
&-2\big(\sum_{j}[v(z_n)^j\nabla y^j+v(y)^j\nabla z_n^j],\mathbb{P}v(z_n)\big)\vspace{1.5mm}\\
& +2(\alpha_1+\alpha_2)\big(\text{div}[A(y)A(z_n)+A(z_n)A(y)],\mathbb{P}v(z_n)\big) \vspace{1.5mm} \\
&+4\beta\big(\text{ div} \big [\left(A(z_n)\cdot A(y)\right)A(y)\big],\mathbb{P}v(z_n)\big)=J_1^0+J_1^1+J_1^2+J_1^3+J_1^4.
\end{array}
\end{equation*}
The first term verifies
\begin{align*}
J_1^0=2\big(\psi-\nabla \pi_n+\nu \Delta z_n, \mathbb{P}v(z_n)\big)\leq 2\vert (\psi,\mathbb{P}v(z_n))\vert+C\Vert z_n\Vert_W^2. 
\end{align*}
Taking into account that $b(y,\mathbb{P}v(z_n),\mathbb{P}v(z_n))=0$, we have
\begin{align*}
%\begin{array}{ll}
J_1^2&=-2\big(b(y,v(z_n)-\mathbb{P}v(z_n),\mathbb{P}v(z_n))+b(z_n,v(y),\mathbb{P}v(z_n))\vspace{1mm}\\&\qquad+b(\mathbb{P}v(z_n),y,v(z_n))+b(\mathbb{P}v(z_n),z_n,v(y))\big).
%\end{array}
\end{align*}
Thanks to the H\"older inequality, we deduce
\begin{align*}
%\begin{array}{ll}
\vert J_1^2\vert 
&\leq C\Vert y\Vert_{\infty}\Vert v(z_n)-\mathbb{P}v(z_n)\Vert_{H^1}\Vert\mathbb{P}v(z_n)\Vert_{2}+C\Vert z_n\Vert_{\infty}\Vert v(y)\Vert_{H^1}\Vert\mathbb{P}v(z_n)\Vert_{2}\vspace{1mm}\\
&\quad+C\Vert \nabla y\Vert_{\infty}\Vert v(z_n)\Vert_{2}\Vert\mathbb{P}v(z_n)\Vert_{2}+C\Vert \nabla z_n\Vert_{4}\Vert \mathbb{P}v(z_n)\Vert_{2}\Vert v(y)\Vert_{4}.
%\end{array}
\end{align*}
We recall that $\widetilde{W} \hookrightarrow W^{1,\infty}(D)\cap  W^{2,4}(D)$, $W \hookrightarrow W^{1,4}(D)\cap  L^\infty(D)$. Hence 
\begin{equation*}
%\begin{array}{ll}
\vert J_1^2\vert\leq K_1\Vert y\Vert_{\widetilde{W}}\Vert z_n\Vert_{W}^2,
%\end{array}
\end{equation*}
where we used $\Vert v(z_n)-\mathbb{P}v(z_n)\Vert_{H^1} \leq K_1\Vert z_n\Vert_{H^2}$ (see \cite[Lem. 5]{Bus-Ift-2}). \\

Concerning $J_1^3$, we have
\begin{equation*}
%\begin{array}{ll}
J_1^3
=2(\alpha_1+\alpha_2)\int_D \text{div}[A(y)A(z_n)+A(z_n)A(y)]\cdot\mathbb{P}v(z_n)dx. 
%\end{array}
\end{equation*}
We observe that $\text{div}[A(y)A(z_n)+A(z_n)A(y)]$ can be expressed as the sum of terms of one of the following forms $ \mathcal{D}(y)\mathcal{D}^2(z_n)$ or $ \mathcal{D}(z_n)\mathcal{D}^2(y)$ \footnote{ $\mathcal{D}^k(u)=(u,\nabla u,\cdots,\nabla^ku)$ is the vector of $\mathbb{R}^{d^{k+1}+\cdots+d^2+d}$ whose components are the components  of $u$ together with the derivatives of order up to $k$ of these components.}. Therefore,
\begin{align*}
\vert \text{div}[A(y)A(z_n)+A(z_n)A(y)]\vert \leq  C(\vert \nabla y\vert \vert \nabla^2z_n\vert+\vert \nabla z_n\vert \vert \nabla^2y\vert).
\end{align*}
Hence
$\vert J_1^3\vert\leq C\int_D(\vert \nabla y\vert \vert \nabla^2z_n\vert+\vert \nabla z_n\vert \vert \nabla^2y\vert)\cdot\vert \mathbb{P}v(z_n)\vert dx.$
The H\"older inequality yields
\begin{align*} 
\vert J_1^3\vert&\leq C\Vert \nabla y\Vert_{\infty} \Vert \nabla^2z_n\Vert_2 \Vert \mathbb{P}v(z_n) \Vert_2+ C\Vert \nabla^ 2 y\Vert_{4} \Vert \nabla z_n\Vert_4 \Vert \mathbb{P}v(z_n) \Vert_2\\
&\leq K_2\Vert y\Vert_{\widetilde{W}}\Vert z_n\Vert_{W}^2.
\end{align*}
On the other hand, we recall that
\begin{align*}
%\begin{array}{ll}
J_1^1+J_1^4&=2\beta\big(\text{ div}\big [ |A(y)|^2A(z_n)\big],\mathbb{P}v(z_n)\big)+4\beta\big(\text{ div} \big [A(z_n): A(y)A(y)\big],\mathbb{P}v(z_n)\big)\vspace{1mm}\\
&=2\beta\int_D\text{ div}\big [ |A(y)|^2A(z_n)\big]\cdot\mathbb{P}v(z_n)dx
+4\beta\int_D\text{ div} \big [(A(z_n):A(y))A(y)\big]\cdot\mathbb{P}v(z_n)dx,
%\end{array}
\end{align*}
and notice that  $\text{ div}\big [ |A(y)|^2A(z_n)\big]$ and $\text{ div} \big [(A(z_n): A(y))A(y)\big]$ can be expressed as the sum of terms of one of the following forms $ \mathcal{D}(y)\mathcal{D}^2(z_n)\mathcal{D}(y)$ or $ \mathcal{D}(z_n)\mathcal{D}^2(y)\mathcal{D}(y)$.
Similar arguments as  in the estimation of $J_1^2$ give
%\begin{align*}
$\vert J_1^3+J_1^4\vert \leq  K_3\Vert y\Vert_{\widetilde{W}}^2\Vert z_n\Vert_{W}^2.$
%\end{align*}
Thus there exists $K>0$ such that
\begin{align}\label{estimateWzn}
\vert J_1^1+J_1^2+J_1^3+J_1^4\vert \leq  K\Vert y\Vert_{\widetilde{W}}(1+\Vert y\Vert_{\widetilde{W}})\Vert z_n\Vert_{W}^2.
\end{align}
Therefore, for any $t\in [0,T]$,
\begin{align*}
%\begin{array}{ll}
2\int_0^t\vert (f(z_n),\mathbb{P}v(z_n))\vert ds&\leq C \sup_{r\in [0, t]}[M_4(r)+M_4^2(r)]\int_0^t\Vert z_n\Vert_{W}^2 ds+2\int_0^t\vert (\psi,\mathbb{P}v(z_n))\vert ds+C\int_0^t\Vert z_n\Vert_W^2 ds. 
%\end{array}
\end{align*}
Thanks to Lemma \ref{Lemma-H3}, we obtain
\begin{align*}
2\int_0^t\vert (f(z_n),\mathbb{P}v(z_n))\vert ds\leq C\int_0^t\Vert \psi(s)\Vert_{2}^2ds +C\int_0^t\Vert z_n\Vert_W^2 ds 
\end{align*}
and 
\begin{align*}
\sup_{r\in [0, t]}\Vert\mathbb{P} v(z_n(r))\Vert_{2}^2\leq 2\int_0^t\vert (f(z_n),\mathbb{P}v(z_n))\vert ds\leq C\int_0^t\Vert \psi(s)\Vert_{2}^2ds +C\int_0^t\Vert z_n(s)\Vert_W^2 ds. 
\end{align*}
Gronwall's inequality ensures that there exists $C >0$ such that
\begin{align}
\sup_{r\in [0, t]}\Vert\mathbb{P} v(z_n(r))\Vert_{2}^2\leq  C(T)\int_0^t\Vert \psi(s)\Vert_{2}^2ds.  
\end{align}
Gathering \eqref{Vest-Zn} and the last inequality, we write
\begin{align*}
\sup_{r\in [0, t]}\Vert\mathbb{P} v(z_n(r))\Vert_{2}^2+\sup_{r\in [0, t]}\Vert z_n(r)\Vert_V^2+4\nu \int_0^{ t}\Vert \mathbb{D} z_n(s)\Vert_2^2ds\leq C(T)\int_0^{t}\Vert \psi(s)\Vert_{2}^2ds.
\end{align*}
We conclude that there exists $M_1>0$ such that 
\begin{align}\label{Wznestimate}
\sup_{r\in [0, t]}\Vert z_n(r)\Vert_W^2 \leq M_1(T)\int_0^T\Vert \psi(s)\Vert_2^2ds \quad \text{  for  any } t\in [0,T].
\end{align}
\subsubsection{Estimate in the space $V$ for  $\partial_t z_n$}   Multiplying \eqref{approximationz-n} by $\partial_tc_i(t)$ and summing from  $i=1$ to $n$, we get
\begin{align*}
\begin{cases}
(\partial_tv(z_n),\partial_tz_n)&=\Big(\psi+\nu \Delta z_n-(y\cdot \nabla )v(z_n)-(z_n\cdot \nabla)v(y)-\sum_{j}v(z_n)^j\nabla y^j\\[1mm]&\quad -\sum_{j}v(y)^j\nabla z_n^j
+(\alpha_1+\alpha_2)\text{div}[A(y)A(z_n)+A(z_n)A(y)]\\[1mm]& \quad+\beta\text{ div}\big [ |A(y)|^2A(z_n)\big] +2\beta\text{ div} \big [\left(A(z_n): A(y)\right)A(y)\big],\partial_tz_n\Big)\\
&:=(f(z_n),\partial_tz_n).
\end{cases}
\end{align*}
Notice that
\begin{align*}
2(\partial_t v(z_n),\partial_tz_n)&=2(\partial_t (z_n),\partial_tz_n)+4\alpha_1(\partial_t \mathbb{D}z_n,\mathbb{D}\partial_tz_n)=2\Vert \partial_tz_n\Vert_2^2+4\alpha_1\Vert  \mathbb{D}\partial_tz_n\Vert_2^2.
\end{align*}
Let us estimate $(f(z_n),\partial_tz_n)$. 
\begin{align*}
(f(z_n),\partial_tz_n)&=\Big(-(y\cdot \nabla )v(z_n)-(z_n\cdot \nabla)v(y)-\sum_{j}(v(z_n)^j\nabla y^j-v(y)^j\nabla z_n^j),\partial_tz_n\Big)\vspace{1mm}\\
&+\Big((\alpha_1+\alpha_2)\text{div}[A(y)A(z_n)+A(z_n)A(y)] +\beta\text{ div}\big [ |A(y)|^2A(z_n)\big],\partial_tz_n\Big)\vspace{1mm}\\
&+2\beta\Big(\text{ div} 
\big [\left(A(z_n):A(y)\right)A(y)\big],\partial_tz_n\Big)+\Big(\psi+\nu \Delta z_n,\partial_tz_n\Big)\vspace{1mm}\\
&\leq (\Vert \psi\Vert_{2}+\nu \Vert z_n\Vert_{W})\Vert \partial_t z_n\Vert_{2}+\vert b(y,v(z_n),\partial_t z_n)\vert+\vert b(z_n,v(y),\partial_t z_n)\vert
\vspace{2mm}\\&+\vert b(\partial_t z_n,z_n,v(y)) \vert+C(\alpha_1+\alpha_2)\int_D(\vert \nabla y\vert \vert \nabla^2z_n\vert+\vert \nabla z_n\vert \vert \nabla^2y\vert)\cdot\vert \partial_tz_n\vert dx\vspace{2mm}\\
&+\vert b(\partial_t z_n,y,v(z_n)) \vert+C\beta \int_D(\vert \nabla y\vert^2 \vert \nabla^2z_n\vert+\vert \nabla z_n\vert \vert \nabla^2y\vert\vert \nabla y\vert )\cdot\vert \partial_tz_n\vert dx.
\end{align*}
A similar argument to the one used to get  \eqref{Vest-Zn} yields
\begin{equation*}
\begin{array}{ll}
\vert b(y,v(z_n),\partial_t z_n)\vert&+\vert b(z_n,v(y),\partial_t z_n)\vert+\vert b(\partial_t z_n,y,v(z_n)) \vert+\vert b(\partial_t z_n,z_n,v(y)) \vert
\vspace{1mm}\\&\leq C\Vert y(t)\Vert_{H^{3}}\Vert z_n(t)\Vert_{W}\big( \Vert \partial_tz_n(t)\Vert_2+\Vert  \mathbb{D}\partial_tz_n(t)\Vert_2 \big).
\end{array}
\end{equation*}
On the other hand, we have
\begin{align*}
%\begin{array}{ll}
%C(\alpha_1+\alpha_2)
&\int_D(\vert \nabla y\vert \vert \nabla^2z_n\vert
+\vert \nabla z_n\vert \vert \nabla^2y\vert)\cdot\vert \partial_tz_n\vert dx \leq C\Vert y\Vert_{W^{1,\infty}}\Vert z_n\Vert_{W}\Vert \partial_t z_n\Vert_2\vspace{1mm}\\&
\qquad \qquad\qquad\qquad+C\Vert z_n\Vert_{W^{1,4}}\Vert y\Vert_{W^{2,4}}\Vert \partial_t z_n\Vert_2\leq C\Vert y(t)\Vert_{H^{3}}\Vert z_n(t)\Vert_{W}\Vert \partial_t z_n(t)\Vert_2,
%\end{array}
\end{align*}
and 
\begin{align*}
%\begin{array}{ll}
%C\beta 
&\int_D(\vert \nabla y\vert^2 \vert \nabla^2z_n\vert+\vert \nabla z_n\vert \vert \nabla^2y\vert\vert \nabla y\vert )\cdot\vert \partial_tz_n\vert dx\vspace{1mm}\\
&\qquad\leq C\Vert y\Vert_{W^{1,\infty}}^2\Vert z_n\Vert_{W}\Vert \partial_t z_n\Vert_2+C\Vert y\Vert_{W^{1,\infty}}\Vert z_n\Vert_{W^{1,4}}\Vert y\Vert_{W^{2,4}}\Vert \partial_t z_n\Vert_2\vspace{1mm}\\
&\qquad\leq C\Vert y(t)\Vert_{H^{3}}^2\Vert z_n(t)\Vert_{W}\Vert \partial_t z_n(t)\Vert_2.
%\end{array}
\end{align*}
Therefore
\begin{align*}
%\begin{array}{ll}
&\Vert \partial_tz_n\Vert_2^2+2\alpha_1\Vert  \mathbb{D}\partial_tz_n\Vert_2^2\vspace{5mm}\\
&\quad\leq C (\Vert \psi\Vert_{2}+\nu \Vert z_n\Vert_{W})\Vert \partial_t z_n\Vert_{2}+C\Vert y(t)\Vert_{H^{3}}\Vert z_n(t)\Vert_{W}\big( \Vert \partial_tz_n(t)\Vert_2+\Vert  \mathbb{D}\partial_tz_n(t)\Vert_2 \big)\vspace{5mm}\\
&\qquad+C\Vert y(t)\Vert_{H^{3}}\Vert z_n(t)\Vert_{W}\Vert \partial_t z_n(t)\Vert_2+C\Vert y(t)\Vert_{H^{3}}^2\Vert z_n(t)\Vert_{W}\Vert \partial_t z_n(t)\Vert_2.
%\end{array}
\end{align*}
For any $\delta>0$, the Young inequality ensures that
\begin{align*}
%\begin{array}{ll}
&\Vert \partial_tz_n\Vert_2^2+2\alpha_1\Vert  \mathbb{D}\partial_tz_n\Vert_2^2\\[0.1cm]
&\leq  C(\Vert \psi\Vert_{2}^2+\nu^2\Vert z_n\Vert_{W}^2)+2\delta\Vert \partial_t z_n\Vert_{2}^2+C\Vert y(t)\Vert_{H^{3}}^2\Vert z_n(t)\Vert_{W}^2+\delta\Vert  \mathbb{D}\partial_tz_n(t)\Vert_2^2\\[0.1cm]%%\big( \Vert \partial_tz_n(t)\Vert_2^2+
&\quad+C\Vert y(t)\Vert_{H^{3}}^2\Vert z_n(t)\Vert_{W}^2+\delta\Vert \partial_t z_n(t)\Vert_2^2+C\Vert y(t)\Vert_{H^{3}}^4\Vert z_n(t)\Vert_{W}^2+\delta\Vert \partial_t z_n(t)\Vert_2^2.
%\end{array}
\end{align*}
An appropriate choice of $\delta$ and integration with respect to time  $t$ give
\begin{align*}
%\begin{array}{ll}
&\int_0^T(\Vert \partial_tz_n\Vert_2^2+\alpha_1\Vert  \mathbb{D}\partial_tz_n\Vert_2^2)ds\leq  C\int_0^T\Vert \psi(s)\Vert_{2}^2ds+C\int_0^T(1+\Vert y(s)\Vert_{H^{3}}^2+\Vert y(s)\Vert_{H^{3}}^4)\Vert z_n(s)\Vert_{W}^2ds\\
&\quad\leq C\int_0^T\Vert \psi(s)\Vert_{2}^2ds+C\sup_{r\in [0, T]}[1+M_4^2(r)+M_4^4(r)]\int_0^T\Vert z_n(s)\Vert_{W}^2ds:= C_1(T).
%\end{array}
\end{align*}
\begin{remark}\label{rmq-global-time}
	Before passing to the limit in the approximate problem, let us note  that \eqref{Wznestimate} guarantees  the global in time  existence of $z_n$, \textit{i.e.}  $z_n\in \mathcal{C}([0,T],W_n).$%%In other words,  \eqref{local-time-lin} becomes
\end{remark}
Finally, we derive the following lemma.
\begin{lemma}\label{Lem-esti-zn}
	Assume $\psi$ satisfies \eqref{psi}. Then, there exists a unique  solution $z_n\in \mathcal{C}([0,T],W_n)$ to \eqref{approximationz-n}. Moreover, there exist  positive constants $C(T),C_1(T)$, which are independent on the index $n$, such that the following estimates hold for each $t\in [0,T]$
	\begin{align*}
%	\begin{array}{ll}
	\sup_{r\in [0,t]}\Vert z_n(r)\Vert_V^2+4\nu \int_0^{ t}\Vert 
	\mathbb{D} z_n(s)\Vert_2^2ds&\leq C(T)\int_0^{T}\Vert \psi(s)\Vert_{2}^2ds,\vspace{2mm}\\
	\sup_{r\in [0, t]}\Vert z_n(r)\Vert_W^2 &\leq C(T)\int_0^T\Vert \psi(s)\Vert_2^2ds,\vspace{2mm}\\
	\int_0^T(\Vert \partial_tz_n\Vert_2^2+\alpha_1\Vert  \mathbb{D}\partial_tz_n\Vert_2^2)ds&\leq  C_1(T).
%	\end{array}
	\end{align*}
\end{lemma}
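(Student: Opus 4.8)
The plan is to assemble the lemma from the a priori estimates obtained above, after first settling the existence theory for each fixed $n$. I would note that the Galerkin system \eqref{approximationz-n} is a \emph{linear} system of ordinary differential equations in the coefficients $(c_i(t))_{i=1}^n$. Indeed, since $\{h_i\}$ is orthonormal in $V$ and $(\partial_t v(z_n),h_i)=(\partial_t z_n,h_i)_V$, the left-hand side reduces to $\dot c_i(t)$, while the right-hand side $(f(z_n),h_i)$ depends linearly on $(c_j)$ with coefficients that are bounded in time through $y\in L^\infty(0,T;\widetilde{W})$, plus the $L^2$ forcing coming from $\psi$ satisfying \eqref{psi}. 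The Cauchy--Lipschitz (Carath\'eodory) theorem then yields a unique maximal solution $z_n\in\mathcal{C}([0,T_n],W_n)$, as already recorded in \eqref{local-time-lin}; uniqueness on the whole interval is inherited from the same linearity.

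Next I would derive the three bounds in the order in which they bootstrap. For the $V$-estimate I take $\phi=h_i$, multiply by $c_i$ and sum to reach the energy identity \eqref{estimateV}; the term $2(\partial_t v(z_n),z_n)$ reproduces $\tfrac{d}{dt}(\Vert z_n\Vert_2^2+2\alpha_1\Vert\mathbb{D}z_n\Vert_2^2)$, and the right-hand side splits as $I_1^0+\dots+I_1^4$. The decisive structural facts are that the Navier conditions $\eqref{Linearized}_3$ annihilate every boundary integral, that $I_1^3=-\beta\int_D|A(y)|^2|A(z_n)|^2\,dx\le 0$ carries a favourable sign and may be discarded, and that all remaining terms are controlled by $C\Vert z_n\Vert_V^2\Vert y\Vert_{\widetilde{W}}(1+\Vert y\Vert_{\widetilde{W}})$ via H\"older and the embeddings $\widetilde{W}\hookrightarrow W^{1,\infty}\cap W^{2,4}$, $W\hookrightarrow W^{1,4}\cap L^\infty$. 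Replacing $\Vert y\Vert_{\widetilde{W}}$ by the uniform bound $M_4(t)$ of Lemma \ref{Lemma-H3} turns the coefficient into an $L^\infty_t$ function, so Gronwall delivers \eqref{Vest-Zn}.

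For the $W$-estimate I would exploit the special basis: multiplying \eqref{approximationz-n} by $\mu_i$ and invoking \eqref{basis1} converts the test in $V$ into a test in $W$, which, after introducing the modified Stokes solution $\tilde f_n$ of \eqref{Stokes}, produces $\tfrac{d}{dt}\Vert\mathbb{P}v(z_n)\Vert_2^2=2(f(z_n),\mathbb{P}v(z_n))$. Estimating $J_1^0,\dots,J_1^4$ as in \eqref{estimateWzn}, again using the Navier conditions, the inequality $\Vert v(z_n)-\mathbb{P}v(z_n)\Vert_{H^1}\le K_1\Vert z_n\Vert_{H^2}$, and Lemma \ref{Lemma-H3}, then combining with \eqref{Vest-Zn}, Gronwall yields \eqref{Wznestimate}. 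This $W$-bound is precisely what upgrades the local solution to a global one: since $t\mapsto\Vert z_n(t)\Vert_W$ cannot blow up on $[0,T]$, the continuation criterion forces $T_n=T$, as observed in Remark \ref{rmq-global-time}.

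Finally, for the time-derivative estimate I would test \eqref{approximationz-n} against $\partial_t z_n$ (multiply by $\dot c_i$ and sum), producing $2\Vert\partial_t z_n\Vert_2^2+4\alpha_1\Vert\mathbb{D}\partial_t z_n\Vert_2^2$ on the left. I expect this to be the main obstacle, because several convective and dissipative terms now carry a factor $\Vert\mathbb{D}\partial_t z_n\Vert_2$ or $\Vert\partial_t z_n\Vert_2$ on the right, so one cannot simply integrate; the remedy is Young's inequality with a small parameter $\delta$ that absorbs these factors into the left-hand side, leaving coefficients that are powers of $\Vert y\Vert_{H^3}$ times $\Vert z_n\Vert_W^2$. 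Integrating in time and inserting the already-established bound \eqref{Wznestimate} together with $\sup_{[0,T]}M_4$ closes the estimate with a constant $C_1(T)$. Collecting \eqref{Vest-Zn}, \eqref{Wznestimate} and this last inequality gives the three asserted bounds, and every constant is manifestly independent of $n$ since each occurrence of $y$ was replaced by the $n$-free quantities of Lemma \ref{Lemma-H3}.
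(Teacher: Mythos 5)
Your proposal is correct and follows essentially the same route as the paper: the $V$-estimate via the energy identity \eqref{estimateV} with the sign of $I_1^3$ and the Navier boundary cancellations, the $W$-estimate via the special basis property \eqref{basis1} and the modified Stokes problem \eqref{Stokes} leading to \eqref{Wznestimate} (which, as in Remark \ref{rmq-global-time}, globalizes the solution), and the $\partial_t z_n$-bound by testing with $\partial_t c_i$ and absorbing via Young's inequality. The constants are made $n$-independent exactly as in the paper, by replacing $\Vert y\Vert_{\widetilde{W}}$ with the bound $M_4$ of Lemma \ref{Lemma-H3} before applying Gronwall.
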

\subsection{Transition to the limit}
\begin{prop}\label{exis-linearize}
	Let $\psi$ satisfies \eqref{psi}, then there exists a unique solution $z$ to \eqref{Linearized} in the sense of Definition \ref{Def-lin}
	and	satisfying the following estimates
	\begin{align*}
%	\begin{array}{ll}
	\sup_{r\in [0,t]}\Vert z(r)\Vert_V^2+4\nu \int_0^{ t}\Vert  z(s)\Vert_V^2ds&\leq C(T)\int_0^{T}\Vert \psi(s)\Vert_{2}^2ds\quad \text{  for  any } t\in ]0,T],\vspace{2mm}\\
	\sup_{r\in [0, t]}\Vert z(r)\Vert_W^2 &\leq C(T)\int_0^T\Vert \psi(s)\Vert_2^2ds,\vspace{2mm}\\
	\int_0^T(\Vert \partial_tz\Vert_2^2+\alpha_1\Vert  
	\mathbb{D}\partial_tz\Vert_2^2)ds&\leq  C_1(T).	
%	\end{array}
	\end{align*}
\end{prop}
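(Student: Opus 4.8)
The plan is to obtain $z$ as a weak-$*$ limit of the Galerkin approximations $z_n$, using the uniform bounds of Lemma \ref{Lem-esti-zn}, and then to establish uniqueness through an energy estimate that exploits the linearity of \eqref{Linearized}. First I would extract subsequences: by Lemma \ref{Lem-esti-zn} the family $z_n$ is bounded in $L^\infty(0,T;W)$ and $\partial_t z_n$ in $L^2(0,T;V)$, so Banach--Alaoglu yields a subsequence with $z_n\overset{*}{\rightharpoonup} z$ in $L^\infty(0,T;W)$ and $\partial_t z_n\rightharpoonup \partial_t z$ in $L^2(0,T;V)$. Since $W\hookrightarrow V$, the sequence is bounded in $W^{1,2}(0,T;V)$, hence $z\in W^{1,2}(0,T;V)\hookrightarrow C([0,T];V)$. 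An Aubin--Lions argument based on the compact embedding $W\hookrightarrow\hookrightarrow V$ provides, in addition, strong convergence $z_n\to z$ in $C([0,T];V)$, although the linearity of the equation will make weak convergence sufficient for the limit passage.

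Next I would pass to the limit in the variational formulation. Fixing $m$, a test function $\phi\in W_m$, and a time weight $\theta\in C^1([0,T])$ with $\theta(T)=0$, I note that every term in \eqref{approximationz-n} is \emph{linear} in $z_n$, with coefficients built from the fixed state $y\in L^\infty(0,T;\widetilde{W})$; in particular $A(y),|A(y)|^2\in L^\infty$ (via $\widetilde{W}\hookrightarrow W^{1,\infty}$) and $v(y),\nabla v(y)\in L^2$. Consequently each term converges under weak convergence alone: the time-derivative term uses $\partial_t z_n\rightharpoonup \partial_t z$; the viscous and modified-Stokes terms use $\mathbb{D}z_n\rightharpoonup\mathbb{D}z$ and $v(z_n)\rightharpoonup v(z)$ in $L^2$; the trilinear terms are handled after moving one derivative onto $\phi$ by means of the antisymmetry $b(y,z,\phi)=-b(y,\phi,z)$, pairing the weakly convergent $A(z_n)$, $\nabla z_n$, $v(z_n)$ against the fixed $L^\infty$/$L^2$ coefficients; and the grade-two and grade-three terms pair $A(z_n)\rightharpoonup A(z)$ in $L^2$ with $A(y)\in L^\infty$ and $\nabla\phi\in L^2$. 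Letting $n\to\infty$ and then using the density of $\bigcup_m W_m$ in $W$, the limit $z$ satisfies the identity of Definition \ref{Def-lin} for a.e.\ $t$. The initial condition follows from $z_n(0)=0$ together with the weak continuity of the evaluation map $W^{1,2}(0,T;V)\to V$ at $t=0$, which gives $z(0)=0$.

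The three asserted estimates for $z$ then follow from those of Lemma \ref{Lem-esti-zn} by weak and weak-$*$ lower semicontinuity of the norms; to reach the stated form of the first estimate one combines the $L^\infty(0,T;V)$ bound with the dissipation term so as to replace $\int_0^t\|\mathbb{D}z\|_2^2\,ds$ by $\int_0^t\|z\|_V^2\,ds$. For uniqueness, I would take two solutions $z_1,z_2$ and set $w=z_1-z_2$, which solves \eqref{Linearized} with $\psi=0$ and $w(0)=0$. Since $w\in L^\infty(0,T;W)$ with $\partial_t w\in L^2(0,T;V)$, we have $w\in W^{1,2}(0,T;V)$, so $w$ is an admissible test function in Definition \ref{Def-lin}; the Hilbert-space chain rule in $V$ gives $(\partial_t v(w),w)=\tfrac12\tfrac{d}{dt}\|w\|_V^2$, and repeating verbatim the computation that led to \eqref{Vest-Zn} (now with $\psi=0$) yields $\tfrac{d}{dt}\|w\|_V^2\le C\|y\|_{\widetilde{W}}(1+\|y\|_{\widetilde{W}})\|w\|_V^2$. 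Grönwall's inequality and $w(0)=0$ then force $w\equiv 0$.

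I expect the limit passage itself to be routine precisely because \eqref{Linearized} is linear, so that weak convergence propagates through all the nonlinear-looking grade-two and grade-three terms once the state-dependent coefficients are frozen. The genuinely delicate point, and the one I would treat most carefully, is the rigorous justification at the level of the limit solution of testing with $\phi=w$ and of the energy identity $(\partial_t v(w),w)=\tfrac12\tfrac{d}{dt}\|w\|_V^2$: this rests on confirming $w\in W^{1,2}(0,T;V)\hookrightarrow C([0,T];V)$, so that the chain rule in the Hilbert space $V$ applies and the initial trace $w(0)$ is well defined in the sense of the remark following Definition \ref{Def-lin}.
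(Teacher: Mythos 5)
Your proposal is correct, and its existence part coincides with the paper's own proof: Galerkin approximations, the uniform bounds of Lemma \ref{Lem-esti-zn}, weak-$*$ compactness in $L^\infty(0,T;W)$ together with weak convergence of $\partial_t z_n$ in $L^2(0,T;V)$, passage to the limit against fixed basis test functions (linearity of \eqref{Linearized} in $z$ making weak convergence sufficient, exactly as you argue), a density argument in $W$, identification of $z(0)=0$ via continuity in $V$, and weak/weak-$*$ lower semicontinuity for the three estimates (including your correct remark that the stated dissipation term $4\nu\int_0^t\Vert z\Vert_V^2\,ds$ is recovered by combining the $\sup$-bound in $V$ with the bound on $\int_0^t\Vert \mathbb{D}z\Vert_2^2\,ds$).

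Where you genuinely go beyond the paper is uniqueness: the paper's proof of Proposition \ref{exis-linearize} establishes only existence and is silent on uniqueness, although the statement claims it. Your argument fills this gap soundly. The difference $w=z_1-z_2$ of two solutions satisfies Definition \ref{Def-lin} with $\psi=0$ and $w(0)=0$; since $w\in L^\infty(0,T;W)$ and $\partial_t w\in L^2(0,T;V)$, one has $w\in H^1(0,T;V)\hookrightarrow C([0,T];V)$, so taking $\phi=w(t)$ is admissible and the chain rule
\begin{equation*}
(\partial_t v(w),w)=(\partial_t w,w)_V=\tfrac12\tfrac{d}{dt}\Vert w\Vert_V^2
\end{equation*}
is justified. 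The point that makes the rest of your argument work is that the estimates of the terms $I_1^1,\dots,I_1^4$ in Section \ref{S4} (leading to \eqref{Vest-Zn}) use only that the arguments lie in $W$, satisfy the Navier-slip boundary condition, and that $y\in L^\infty(0,T;\widetilde W)$; they do not use the extra $H^4$-smoothness of the Galerkin functions, and moreover Definition \ref{Def-lin} is already written in the integrated-by-parts form, so no boundary terms need to be re-justified at the limit level. Hence $\tfrac{d}{dt}\Vert w\Vert_V^2\le C\Vert y\Vert_{\widetilde W}(1+\Vert y\Vert_{\widetilde W})\Vert w\Vert_V^2$ and Gr\"onwall give $w\equiv 0$. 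In short: same route as the paper for existence, plus a correct uniqueness argument that the paper omits.
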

\begin{proof}
	By compactness with respect to the weak-$*$ topology in the spaces $L^\infty(0,T;V)$, $L^\infty(0,T;W)$ and  the weak topology in the space $L^2(0,T;V)$  there exists $z \in L^\infty(0,T;W)$ such that the following convergences hold, up to sub-sequences (denoted  in the same way as the sequences)
	\begin{equation}
	\label{weak-cv-zn}
	\begin{array}{ll}
	z_n &\overset{\ast}{\rightharpoonup} z \quad \text{  in }  L^\infty(0,T;V),\vspace{2mm}\\
	z_n &\overset{\ast}{\rightharpoonup} z \quad \text{  in }  L^\infty(0,T;W),\vspace{2mm}\\
	\partial_tz_n &\rightharpoonup \partial_tz \quad \text{  in }  L^2(0,T;V).
	\end{array}
	\end{equation}
	From \eqref{weak-cv-zn}, we  deduce  that $z\in \mathcal{C}([0,T],V)$ and therefore $z_n(0)=0$ converges to $z(0)$ in $V$, i.e. $z(0)=0$.
	We recall that $z_n$ solves the equation
	\begin{equation*}
	\begin{array}{ll}
	(\partial_tv(z_n),\phi)&= \Big(\psi+\nu \Delta z_n-(y\cdot \nabla )v(z_n)-(z_n\cdot \nabla)v(y)-\sum_{j}v(z_n)^j\nabla y^j\vspace{2mm}\\
	&-\sum_{j}v(y)^j\nabla z_n^j
	+(\alpha_1+\alpha_2)\text{div}[A(y)A(z_n)+A(z_n)A(y)]\vspace{2mm}\\ &+\beta\text{ div}\big [ |A(y)|^2A(z_n)\big]+2\beta\text{ div} \big [(A(z_n): A(y))A(y)\big],\phi\Big), \quad\forall \phi \in W_n.  
	\end{array}
	\end{equation*}
	Setting $\phi=h_i$ and passing to the limit, as $n\to \infty$, we deduce
	\begin{align*}
	&(\partial_tv(z),h_i)
	+2\nu(\mathbb{D} z,\mathbb{D}h_i)+b(y,v(z),h_i)+b(z,v(y),h_i)+b(h_i,y,v(z))\\[2mm]&+b(h_i,z,v(y))+(\alpha_1+\alpha_2)\big(A(y)A(z)+A(z)A(y),\nabla h_i\big)  +\beta\big( |A(y)|^2A(z), \nabla h_i\big)
	\\[2mm]&\quad+2\beta \big((A(z):
	A(y))A(y),\nabla h_i\big)= (\psi,h_i)
	\end{align*}
	for each $ i\in \mathbb{N}.$ Finally, a standard density argument gives the claimed result, namely  $z\in L^2(0,T;W)$ is a solution of \eqref{Linearized}
	in the sense of Definition \ref{Def-lin}.
\end{proof}
%%%I am here
\section{A Stability result } %for \eqref{I}
\label{S5}
The main task of this  section is to  establish  a stability result for the solution of the state equation. This is a crucial  step to study  the G\^ateaux derivative of the control-to-state mapping.
\vspace{2mm}\\
Let us recall  the relation  (see \cite[Appendix]{Bus-Ift-1})
\begin{equation}
\label{3rd-equivalent}
\begin{array}{ll}
\dfrac{1}{2}\nabla(\alpha_1\vert \nabla y\vert^2&-\vert y\vert^2)-(y\cdot \nabla)y+\text{ div}(N(y))
\vspace{2mm}\\&= -(y\cdot \nabla)v(y)-\sum_{j}v(y)^j\nabla y^j+(\alpha_1+\alpha_2)\text{ div}(A(y)^2),
\end{array}
\end{equation}
which allows to write  the equations \eqref{I} in the following form
\begin{equation}
\label{I-equi}
\left\{\begin{array}{ll}
\partial_t(v(y))=-\nabla \mathbf{\bar P}+\nu \Delta y-(y\cdot \nabla)y+\text{ div}(N(y))+\text{ div}(S(y))+U \quad &\hspace*{-0.5cm}\text{in } D\times (0,T),\vspace{2mm}\\
\text{div}(y)=0 \quad &\hspace*{-0.5cm}\text{in } D\times  (0,T),\vspace{2mm}\\
y\cdot \eta=0, \quad [\eta \cdot \mathbb{D}(y)]\cdot \tau=0  \quad &\hspace*{-0.5cm}\text{on } \partial D \times (0,T),\vspace{2mm}\\
y(x,0)=y_0(x) \quad &\hspace*{-0.5cm}\text{in } D,
\end{array}
\right.
\end{equation}
where
\begin{equation*}
\begin{array}{ll}
S(y)&:=\beta \Big ( |A(y)|^2A(y)\Big),\vspace{1mm}\\
N(y)&:=\alpha_1\big(  y \cdot \nabla A(y)+(\nabla y)^TA(y)+A(y)\nabla y\big)+\alpha_2(A(y))^2.
\end{array}
\end{equation*}

\begin{itemize}
	\item[$H_4$:] Assume that 
	$
	U_1,\,U_2 \in L^2(0,T;(H^1(D))^2); \quad 
	y_0^1,\,y_0^2 \in \widetilde{W}.
	$
\end{itemize}
\begin{theorem}
	Let us take $U_1,\,U_2 $ and $y_0^1,\,y_0^2$ verifying $H_4$, and consider the  corresponding solutions of \eqref{I}
	\begin{equation*}
	\begin{array}{ll}
	y_1,y_2 \in L^\infty(0,T;\widetilde{W}).
	\end{array}
	\end{equation*}
	Then, there exists a positive constant $\widetilde C$, which depends only on the data such that 
	the following estimate holds
	\begin{equation}\label{stability-estimate}
		\sup_{r\in [0, T]}\Vert y_1(r)-y_2(r)\Vert_{W}^2\leq \widetilde C\big[ \Vert y_0^1-y_0^2\Vert_{W}^2+\int_0^T\Vert U_1(s)-U_2(s)\Vert_{2}^2ds\big].
	\end{equation}
\end{theorem}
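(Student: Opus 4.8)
The plan is to set $w=y_1-y_2$ and to derive \eqref{stability-estimate} in two successive stages, a $V$-estimate followed by a $W$-estimate, mimicking the a priori bounds obtained for the linearized equation in Section \ref{S4}. Subtracting the two copies of \eqref{I} and writing the convective and Rivlin--Ericksen contributions in incremental form, one checks that $w$ solves a problem of exactly the same structure as \eqref{Linearized}: the increments $(w\cdot\nabla)v(y_1)+(y_2\cdot\nabla)v(w)$, $\sum_j[v(w)^j\nabla y_1^j+v(y_2)^j\nabla w^j]$, $(\alpha_1+\alpha_2)\mathrm{div}(A(y_1)^2-A(y_2)^2)$ and $\beta\,\mathrm{div}(|A(y_1)|^2A(y_1)-|A(y_2)|^2A(y_2))$ appear, together with the source $U_1-U_2$ and the initial value $y_0^1-y_0^2$. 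The coefficients of these terms are controlled uniformly in time in $\widetilde W$ by $\gamma=\sup_{t}M_4(t)$ through Lemma \ref{Lemma-H3}.

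For the $V$-estimate I would test the difference equation against $w$ and integrate by parts, using the Navier boundary conditions on $y_1,y_2$ and $w$ to discard the boundary contributions and the pressure gradient, exactly as in the derivation of \eqref{Vest-Zn}. The convective and $\alpha$-terms are handled by the curl identity \cite[Lem. 3.5.]{CC18} together with H\"older's inequality, giving bounds of the form $C\Vert y_i\Vert_{\widetilde W}(1+\Vert y_i\Vert_{\widetilde W})\Vert w\Vert_V^2$. The crucial simplification at this level is the cubic term: after integration by parts it equals $-\tfrac{\beta}{2}\int_D\bigl(|A(y_1)|^2A(y_1)-|A(y_2)|^2A(y_2)\bigr):\bigl(A(y_1)-A(y_2)\bigr)\,dx$, which is nonpositive by monotonicity of the map $A\mapsto|A|^2A$ and may simply be discarded (each boundary piece $\bigl(|A(y_i)|^2A(y_i)\bigr)\eta\cdot w$ vanishes separately, since $w\cdot\eta=0$ and $(\eta\cdot\mathbb{D}(y_i))\cdot\tau=0$ on $\partial D$). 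Gronwall's inequality then yields $\sup_{t}\Vert w\Vert_V^2+\nu\int_0^T\Vert\mathbb{D}w\Vert_2^2\,ds\le C\bigl(\Vert y_0^1-y_0^2\Vert_V^2+\int_0^T\Vert U_1-U_2\Vert_2^2\,ds\bigr)$.

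For the $W$-estimate I would test against $\mathbb{P}v(w)$, rigorously through the Galerkin scheme and the special basis of Section \ref{S4}, so that $\tfrac{d}{dt}\Vert\mathbb{P}v(w)\Vert_2^2$ appears on the left. Each resulting term is then estimated as the corresponding $J_1^0,\dots,J_1^4$ of the linearized analysis, using the embeddings $\widetilde W\hookrightarrow W^{1,\infty}(D)\cap W^{2,4}(D)$ and $W\hookrightarrow W^{1,4}(D)\cap L^\infty(D)$ together with the bound $\Vert v(w)-\mathbb{P}v(w)\Vert_{H^1}\le C\Vert w\Vert_{H^2}$ from \cite[Lem. 5]{Bus-Ift-2}. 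This leads to a bound of the form $K\Vert y_i\Vert_{\widetilde W}(1+\Vert y_i\Vert_{\widetilde W}^2)\Vert w\Vert_W^2+C\Vert w\Vert_W^2$ for the nonlinear terms, plus harmless source contributions; a second application of Gronwall, combined with the $V$-estimate and Lemma \ref{Lemma-H3}, then produces \eqref{stability-estimate} with $\widetilde C$ depending only on $\gamma$ and the model parameters.

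The main obstacle is the cubic term at the $W$-level. Testing against $\mathbb{P}v(w)$ rather than against $w$ destroys the monotonicity sign that rescued the $V$-estimate, so the full increment $\mathrm{div}(|A(y_1)|^2A(y_1)-|A(y_2)|^2A(y_2))$ must be estimated directly. Expanding it as a sum of terms of the schematic forms $\mathcal{D}(y)\,\mathcal{D}^2(w)\,\mathcal{D}(y)$ and $\mathcal{D}(w)\,\mathcal{D}^2(y)\,\mathcal{D}(y)$, one pairs the top-order factor $\mathcal{D}^2(w)$ (respectively $\mathcal{D}^2(y)$) with $\mathbb{P}v(w)\in(L^2(D))^2$ and places the remaining factors in $L^\infty$ or $L^4$. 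This is precisely where the $H^3$-regularity of the states is indispensable, as it furnishes the $L^\infty$ control of $A(y_i)$ and the $W^{2,4}$ control needed to close the product estimates; without it the quadratic coefficient $\Vert y_i\Vert_{\widetilde W}^2$ could not be bounded uniformly on $[0,T]$.
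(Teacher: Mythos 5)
Your proposal is correct and follows essentially the same route as the paper: a $V$-estimate that exploits the monotonicity (sign) of the cubic term $\beta\,\mathrm{div}(|A|^2A)$ after integration by parts, followed by a $W$-estimate obtained by testing with $\mathbb{P}v(y_1-y_2)$ and bounding the incremental nonlinear terms directly through the $H^3$-regularity of the states, then Gronwall. The only cosmetic differences are that the paper works with the equivalent formulation \eqref{I-equi} (via the identity \eqref{3rd-equivalent}) rather than the original form of \eqref{I}, and that it tests with $\mathbb{P}v(y_1-y_2)$ directly --- which is justified by the regularity of the two exact solutions, whereas the Galerkin scheme you invoke is not actually available for their difference.
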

\begin{proof}
	Let $y_1$ and $y_2$ be two solutions of \eqref{I} associated  with the external forces $U_1$ and $U_2$   and the  initial data $y_0^1$ and $y_0^2$, respectively.\vspace{2mm}\\
	Denoting  $y=y_1-y_2$ and $y_0=y_0^1-y_0^2$,  we can verify that  $y$ solves the system
	\begin{equation}
	\label{Ito1}
	\left\{\begin{array}{ll}
	\partial_t(v(y))=-\nabla(\mathbf{\bar P}_1-\mathbf{\bar P}_2)+\nu \Delta y-\big[(y\cdot \nabla)y_1+(y_2\cdot \nabla)y\big]&\vspace{2mm}\\\qquad+\text{ div}(N(y_1)-N(y_2))
	+\text{ div}(S(y_1)-S(y_2))+(U_1-U_2) \quad &\text{in } D\times  (0,T),\vspace{2mm}\\
	\text{div}(y)=0 \quad &\text{in } D\times  (0,T),
	\vspace{1mm}\\
	y\cdot \eta=0, \quad [\eta \cdot \mathbb{D}(y)]\cdot \tau=0  \quad &\text{on } \partial D\times (0,T),\vspace{1mm}\\
	y(x,0)=y_0(x) \quad & \text{in } D.
	\end{array}
	\right.
	\end{equation}
	Let us test \eqref{Ito1}$_ 1$  by $y$. Then we have
	\begin{align*}
	%\begin{array}{ll}
	&	\partial_t(\Vert y\Vert_V^2)+4\nu\Vert \mathbb{D} y\Vert_2^2\vspace{2mm}\\&\quad=-2\int_D\big[(y\cdot \nabla)y_1+(y_2\cdot \nabla)y\big]y dx
	+2\langle \text{ div}(N(y_1)-N(y_2)), y\rangle \vspace{2mm}\\&\qquad+
	2\langle \text{ div}(S(y_1)-S(y_2)),y\rangle+2\int_D(U_1-U_2)\cdot ydx=I_1+I_2+I_3+I_4.
	%\end{array}
	\end{align*}
	We will estimate $I_i, i=1,\cdots,4$. Since  $V\hookrightarrow L^4(D)$, the first term verifies
	\begin{align*}
%	\begin{array}{ll}
	\vert I_1\vert 
	&=2\left\vert \int_D(y\cdot \nabla)y_1\cdot y dx\right\vert \leq C\Vert y\Vert_4^2\Vert \nabla y_1\Vert_2\leq C\Vert y\Vert_V^2\Vert \nabla y_1\Vert_2 \leq  C\Vert y\Vert_V^2\Vert y_1\Vert_{H^3}. 
%	\end{array}
	\end{align*}
	After an integration by parts, the term $I_3$, can be treated 
	using the same arguments as in \cite[Sect.3]{Bus-Ift-2}, the term on the boundary vanishes and  we have
	\begin{align*}
%	\begin{array}{ll}
	I_3&=	2\langle \text{ div}(S(y_1)-S(y_2)),y_1-y_2\rangle=-2\int_D(S(y_1)-S(y_2)): \nabla ydx
	\vspace{2mm}\\&=-\dfrac{\beta}{2}(\int_D(|A(y_1)|^2-|A(y_2)|^2)^2dx+\int_D(|A(y_1)|^2+|A(y_2)|^2)|A(y_1-y_2)|^2dx) \leq 0.
%	\end{array}
	\end{align*}
	Concerning $I_4$, one has 
	\begin{equation*}
	%\begin{array}{ll}
	|I_4|=	2\left|\int_D(U_1-U_2)\cdot ydx\right| \leq \Vert U_1-U_2\Vert_2^2+\Vert y\Vert_2^2\leq \Vert U_1-U_2\Vert_2^2+\Vert y\Vert_V^2.
	%\end{array}
	\end{equation*}
	Let us estimate the  term $I_2$. Integrating by 
	parts and taking into account that the boundary terms vanish (see \cite[Sect. 3]{Bus-Ift-2}), we deduce
	\begin{align*}
	%\begin{array}{ll}	
	I_2&= 2\langle \text{ div}(N(y_1)-N(y_2)), y\rangle =-2 \int_D(N(y_1)-N(y_2)): \nabla ydx\vspace{2mm}\\
	&=-\alpha_2\int_D\big(A(y_1)^2-A(y_2)^2\big): A(y)dx-\alpha_1\int_D\big(y_1 \cdot \nabla A(y_1)-y_2 \cdot \nabla A(y_2)\big): A(y)dx\vspace{2mm}\\
	& -\alpha_1\int_D((\nabla y_1)^TA(y_1)+A(y_1)\nabla y_1-(\nabla y_2)^TA(y_2)-A(y_2)\nabla y_2): A(y)dx\vspace{2mm}\\&=-\alpha_2I_2^1-\alpha_1I_2^2-\alpha_1I_2^3. 	
%	\end{array}
	\end{align*}
	Since
	\begin{align*}
	%\begin{array}{ll}
	I_2^1&=	\int_D\big(A(y_1)^2-A(y_2)^2\big): A(y)dx=\int_D\big(A(y)A(y_1)+A(y_2)A(y)\big): A(y)dx;\\[1mm]
	I_2^2&=\int_D\big(y_1 \cdot \nabla A(y_1)-y_2 \cdot \nabla A(y_2)\big): A(y)dx\\&=\int_D\big(y_1 \cdot \nabla A(y_1-y_2)+(y_1-y_2)\cdot \nabla A(y_2)\big): A(y)dx=\int_D\big(y\cdot \nabla A(y_2)\big): A(y)dx;\\[1mm]
	I_2^3&=\int_D((\nabla y_1)^TA(y_1)+A(y_1)\nabla y_1-(\nabla y_2)^TA(y_2)-A(y_2)\nabla y_2): A(y)dx\\&=2\int_D\big(A(y_1)A(y)):\nabla y_1-(A(y_2)A(y)):\nabla y_2\big)dx\\&=2\int_D\big((A(y))^2:\nabla y_1+(A(y_2)A(y)):\nabla y\big) dx;
%	\end{array}
	\end{align*}
	the H\"older's inequality and the embedding $H^1(D) \hookrightarrow L^4(D)$ yield
	\begin{align*}
%	\begin{array}{ll}
	\vert I_2^1\vert &\leq \int_D\vert \big(A(y)A(y_1)+A(y_2)A(y)\big)\vert : \vert A(y)\vert dx \leq C(\Vert y_1\Vert_{W^{1,\infty}}+\Vert y_2\Vert_{W^{1,\infty}})\Vert \nabla y\Vert_{2}^2;\vspace{2mm}\\
	\vert I_2^2\vert &\leq \int_D\vert \big(y\cdot \nabla A(y_2)\big): A(y) \vert  dx\leq C\Vert y\Vert_4\Vert y_2\Vert_{W^{2,4}}\Vert \nabla y\Vert_{2}\leq  C\Vert y_2\Vert_{W^{2,4}}\Vert \nabla y\Vert_{2}^2;\vspace{2mm}\\
	\vert I_2^3\vert &\leq C\int_D\vert \big((A(y))^2:\nabla y_1+(A(y_2)A(y)):\nabla y\big)\vert dx  \leq C(\Vert y_1\Vert_{W^{1,\infty}}+\Vert y_2\Vert_{W^{1,\infty}})\Vert \nabla y\Vert_{2}^2.
%	\end{array}
	\end{align*}
	Then the embedding  $H^3(D)\hookrightarrow W^{2,4}(D)\cap W^{1,\infty}(D)$ gives
	%\begin{align*}
	$\vert I_2\vert \leq C (\Vert y_1\Vert_{H^3}+\Vert y_2\Vert_{H^3})\Vert  y\Vert_{V}^2.$
	%\end{align*}
	By gathering the previous estimates, we obtain
	\begin{align}
	\label{stabilityVnorm}
%	\begin{array}{ll}
	\Vert y(t)\Vert_V^2&+4\nu \int_0^t\Vert \mathbb{D}y\Vert_{2}^2ds\vspace{2mm} \nonumber\\&\leq \Vert y_0\Vert_{V}^2+M_0\int_0^t(\Vert y_1\Vert_{H^3}+\Vert y_2\Vert_{H^3}+1)\Vert  y\Vert_{V}^2ds+\int_0^t\Vert U_1-U_2\Vert_{2}^2ds.
%	\end{array}
	\end{align}
	Now,   multiplying \eqref{Ito1}$_ 1$  by $\mathbb{P} v (y)$, we write
	\begin{align*}
%	\begin{array}{ll}
	\partial_t\Vert \mathbb{P} v (y)\Vert_2^2&=2\langle \nu \Delta y,\mathbb{P}v(y)\rangle-2\langle \big[(y\cdot \nabla)y_1+(y_2\cdot \nabla)y\big],\mathbb{P}v(y) \rangle+2\langle(U_1-U_2), \mathbb{P}v(y)\rangle\vspace{2mm}\\
	&\quad+2\langle \text{ div}(N(y_1)-N(y_2)), \mathbb{P}v(y)\rangle+
	2\langle \text{ div}(S(y_1)-S(y_2)),\mathbb{P}v(y)\rangle.
%	\end{array}
	\end{align*}
	Using \eqref{3rd-equivalent} and knowing that    $\mathbb{P}v(y)$ is 
	divergence free,  we get 
	\begin{align*}
	&\langle-(y_1\cdot \nabla)y_1+(y_2\cdot \nabla)y_2+\text{ div}(N(y_1)-N(y_2)), \mathbb{P}v(y)\rangle\\
	&= \langle-(y\cdot \nabla)v(y_1)-(y_2\cdot \nabla)v(y)-\sum_{j}v(y)^j\nabla y_1^j-\sum_{j}v(y_2)^j\nabla y^j+(\alpha_1+\alpha_2)\text{ div}(A(y_1)^2-A(y_2)^2), \mathbb{P}v(y)\rangle.
	\end{align*}
	Therefore, we infer that
	\begin{align*}
%	\begin{array}{ll}
	&- 2\langle \big[(y\cdot \nabla)y_1+(y_2\cdot \nabla)y\big],\mathbb{P}v(y) \rangle
	+2\langle \text{ div}(N(y_1)-N(y_2)), \mathbb{P}v(y)\rangle\vspace{2mm}	\\
	&=-2b(y,v(y_1),\mathbb{P}v(y))-2b(y_2,v(y)-\mathbb{P}v(y),\mathbb{P}v(y))-2b(\mathbb{P}v(y),y_1,v(y))\\[1mm]&\quad-2b(\mathbb{P}v(y),y,v(y_2))+2(\alpha_1+\alpha_2)(\text{ div}[A(y_1)A(y)+A(y)A(y_2)],\mathbb{P}v(y))\vspace{2mm}\\
	&\leq C\Vert y \Vert_\infty\Vert v(y_1)\Vert_{H^{1}}\Vert \mathbb{P}v(y)\Vert_2+C\Vert y_2 \Vert_\infty\Vert v(y)-\mathbb{P}v(y)\Vert_{H^{1}}\Vert \mathbb{P}v(y)\Vert_2
	\vspace{2mm}\\ 
	&\quad+C\Vert y_1 \Vert_{W^{1,\infty}}\Vert v(y)\Vert_{2}\Vert \mathbb{P}v(y)\Vert_2 +C\Vert y\Vert_{W^{1,4}}\Vert v(y_2)\Vert_{4}\Vert \mathbb{P}v(y)\Vert_2\vspace{2mm}\\
	&\quad+C \Vert \text{ div}[A(y_1)A(y)+A(y)A(y_2)]\Vert_2 \Vert Pv(y) \Vert_{2}
	\leq C(\Vert y_1\Vert_{H^3}+\Vert y_2\Vert_{H^3})\Vert y\Vert_{W}^2,
%	\end{array}
	\end{align*}
	where we used that $H^3(D)\hookrightarrow W^{2,4}(D)\cap W^{1,\infty}(D)$.
	On the other hand, we have
	\begin{equation*}
	\begin{array}{ll}
	2\langle \text{ div}(S(y_1)-S(y_2)),\mathbb{P}v(y)\rangle&=2\beta\left(\text{ div}\left\{\vert A(y_1)\vert^2A(y)
	+\left[A(y_1): A(y)+A(y):A(y_2)\right]
	A(y_2) \right\},\mathbb{P}v(y)\right)\vspace{2mm}\\
	&\leq C(\Vert y_1\Vert_{H^3}^2+\Vert y_2\Vert_{H^3}^2)\Vert y\Vert_{W}^2,
	\end{array}
	\end{equation*}
	where we used a similair arguments to the one used to  get \eqref{estimateWzn} to estimate the last two terms.
	Hence, there exists $M>0$ such that 
	\begin{align}\label{stabilityWnorm}
	\Vert \mathbb{P} v (y(t))\Vert_2^2
	&\leq \Vert \mathbb{P} v (y_0)\Vert_2^2+C\int_0^t(1+\sum_{l=1}^2[\Vert y_l\Vert_{H^3}+\Vert y_l\Vert_{H^3}^2])\Vert y\Vert_{W}^2 ds+ \int_0^t\Vert U_1-U_2\Vert_{2}^2ds.
	\end{align}
	Consequently, \eqref{stabilityVnorm} and \eqref{stabilityWnorm} give the following relation
	\begin{align*}
%	\begin{array}{ll}
	&\Vert \mathbb{P} v (y(t))\Vert_2^2+\Vert y(t)\Vert_V^2+4\nu \int_0^t\Vert \mathbb{D}y\Vert_{2}^2ds \vspace{2mm}\\
	&\quad\leq \Vert y_0\Vert_{V}^2+\Vert \mathbb{P} v (y_0)\Vert_2^2+ M\int_0^t(1+\Vert y_1\Vert_{H^3}^2+\Vert y_2\Vert_{H^3}^2)\Vert y\Vert_{W}^2 ds+ M\int_0^t\Vert U_1-U_2\Vert_{2}^2ds.
	%\end{array}
	\end{align*}
	As a conclusion, we obtain
	\begin{align*}
	%\begin{array}{ll}
	&\sup_{t\in [0, T]}\Vert y(t)\Vert_W^2
\leq  \Vert y_0\Vert_W^2	
	+M\int_0^T(1+\Vert y_1\Vert_{H^3}^2+\Vert y_2\Vert_{H^3}^2)\Vert y\Vert_{W}^2 ds+ M\int_0^T\Vert U_1-U_2\Vert_{2}^2ds,
%	\end{array}
	\end{align*}
	where $K$ is a positive constant. Finally,  \eqref{stability-estimate} is a consequence of Gronwall's inequality with 
	$$C=(M+1)e^{M(1+2K)T}, \quad K=\displaystyle\sup_{r\in [0, T]}M_4^2(t). $$
\end{proof}
\section{Gâteaux differentiability of the control-to-state mapping}
\label{S6}
This section studies  the differentialility  of the control-to-state mapping. More precisely, with the help of the stability property established in the previous section, we will prove that
the Gâteaux derivative of the control-to-state mapping is provided by the solution of the linearized equation.

\begin{prop}\label{prop-gateux-diff}
	Let us consider $U$ and $y_0$ satisfying \eqref{H1} and $\psi\in L^2(0,T;(H^1(D))^2)$. Defining
	$$ U_\rho=U+\rho\psi, \quad \rho \in (0,1),$$
	let $y$ and $y_\rho$ be  the solutions of \eqref{I} associated with $(U,y_0)$ and $(U_\rho,y_0)$, respectively, then the following representaion holds
	\begin{align}\label{representationz}
	y_\rho=y+\rho z+\rho \delta_\rho \;\text{ with } \;\displaystyle\lim_{\rho\to 0} \sup_{t\in [0,T]}\Vert \delta_\rho\Vert_V^2=0,
	\end{align}
	where $z\in L^\infty(0,T;W)$  is the solution of  \eqref{Linearized}, satisfying the estimates of Proposition \ref{exis-linearize}.
\end{prop}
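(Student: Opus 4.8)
The plan is to quantify the increment $y_\rho-y$ and to show that, once the linear response $\rho z$ is subtracted, what remains is of order $\rho$ in the $V$-norm. Write $w_\rho:=y_\rho-y$ and $r_\rho:=w_\rho/\rho$. Since $y_\rho$ and $y$ share the initial datum $y_0$ and are driven by forces differing by $U_\rho-U=\rho\psi$, the stability estimate \eqref{stability-estimate} applies with $y_0^1-y_0^2=0$ and $U_1-U_2=\rho\psi$, giving $\sup_{t\in[0,T]}\Vert r_\rho(t)\Vert_W^2\leq \widetilde C\,\Vert\psi\Vert_{L^2(0,T;(L^2(D))^2)}^2$ \emph{uniformly} in $\rho\in(0,1)$; moreover $y$ is fixed, so $\Vert y\Vert_{L^\infty(0,T;\widetilde W)}$ is a fixed constant by Theorem \ref{THm1} and \eqref{H1}. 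These two uniform bounds are precisely what will let me treat every correction term as genuinely small.

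Next I would derive the equation solved by $r_\rho$. Subtracting the weak formulations of \eqref{I} for $y_\rho$ and for $y$ and dividing by $\rho$, each nonlinearity splits into its linearization about $y$ — which reproduces exactly the operator of \eqref{Linearized} evaluated at $r_\rho$, with forcing $\psi$ — plus correction terms carrying at least one extra factor of $\rho$. Using the linearity of $v$ and $A$, the identity $(y_\rho\cdot\nabla)v(y_\rho)-(y\cdot\nabla)v(y)=(y\cdot\nabla)v(w_\rho)+(w_\rho\cdot\nabla)v(y)+(w_\rho\cdot\nabla)v(w_\rho)$, together with the analogous bilinear expansion of $\sum_{j}v^j\nabla y^j$, the quadratic expansion of $\text{div}(A^2)$, and the cubic expansion $|A(y_\rho)|^2A(y_\rho)-|A(y)|^2A(y)$, one checks that $r_\rho$ satisfies \eqref{Linearized} (with $z$ replaced by $r_\rho$) up to a source $R_\rho$ consisting of terms of the form $\rho(r_\rho\cdot\nabla)v(r_\rho)$, $\rho\sum_{j}v(r_\rho)^j\nabla r_\rho^j$, $\rho(\alpha_1+\alpha_2)\text{div}(A(r_\rho)^2)$, $\rho\beta\,\text{div}[\,|A(r_\rho)|^2A(y)+2(A(y):A(r_\rho))A(r_\rho)\,]$, and the genuinely higher-order $\rho^2\beta\,\text{div}(|A(r_\rho)|^2A(r_\rho))$. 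Setting $\delta_\rho:=r_\rho-z$ and subtracting the equation \eqref{Linearized} for $z$, the forcing $\psi$ cancels, $\delta_\rho$ solves the linearized system driven only by $R_\rho$, and $\delta_\rho(0)=r_\rho(0)-z(0)=0$.

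The estimate then follows by the energy method, mirroring the $V$-estimate of Section \ref{S4}. Testing the $\delta_\rho$-equation with $\delta_\rho\in W$ produces $\tfrac12\tfrac{d}{dt}\Vert\delta_\rho\Vert_V^2+4\nu\Vert\mathbb{D}\delta_\rho\Vert_2^2$ on the left, while the linearized-operator contributions are controlled exactly as the terms $I_1^1,\dots,I_1^4$ there, namely by $C\Vert y\Vert_{\widetilde W}(1+\Vert y\Vert_{\widetilde W})\Vert\delta_\rho\Vert_V^2$. For the source I would integrate the divergence-form remainders by parts (the boundary integrals vanishing by the Navier conditions, as in Section \ref{S4}) and move the derivative off $v(r_\rho)$ in the convective remainder via the antisymmetry $b(r_\rho,v(r_\rho),\delta_\rho)=-b(r_\rho,\delta_\rho,v(r_\rho))$. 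Using the embeddings $\widetilde W\hookrightarrow W^{1,\infty}(D)\cap W^{2,4}(D)$ and $W\hookrightarrow W^{1,4}(D)\cap L^\infty(D)$, every pairing $(R_\rho,\delta_\rho)$ is then bounded by $C\rho\,P(\Vert y\Vert_{\widetilde W},\Vert r_\rho\Vert_W)\Vert\delta_\rho\Vert_V$ for a fixed polynomial $P$. Invoking the uniform bounds of the first step this is at most $C\rho\Vert\delta_\rho\Vert_V\leq C\rho^2+\tfrac12\Vert\delta_\rho\Vert_V^2$ by Young's inequality, and Gronwall's lemma yields $\sup_{t\in[0,T]}\Vert\delta_\rho\Vert_V^2\leq C(T)\rho^2\to 0$, which is \eqref{representationz}.

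I expect the main obstacle to be the bookkeeping and the uniform estimation of the cubic ($\beta$) remainder: its expansion produces the largest number of terms, several of them quadratic and cubic in $r_\rho$, and controlling them requires placing all derivatives on $y$ (measured in $\widetilde W$) and on $r_\rho$ (measured in $W$), leaving only the $V$-norm of $\delta_\rho$ on the right. This is exactly where the uniform $W$-bound on $r_\rho$ coming from the stability theorem, rather than a mere $V$-bound, is indispensable: without it the $O(\rho)$ smallness of the source, and hence the vanishing of $\delta_\rho$, could not be extracted.
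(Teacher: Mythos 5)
Your proof is correct, and it rests on the same two pillars as the paper's argument: the stability estimate \eqref{stability-estimate} and a $V$-energy/Gronwall argument for $\delta_\rho$, with the same cancellation mechanisms (antisymmetry of $b$, vanishing of boundary terms under the Navier conditions, the embeddings $\widetilde{W}\hookrightarrow W^{1,\infty}\cap W^{2,4}$ and $W\hookrightarrow W^{1,4}\cap L^\infty$). Where you differ is the decomposition of the error equation. You expand every nonlinearity around $y$, so your $\delta_\rho$ solves the linearized system with coefficients frozen at $y$, driven by a source $R_\rho$ carrying an explicit factor $\rho$ but quadratic/cubic in the difference quotient $r_\rho$; closing the estimate then requires the uniform-in-$\rho$ bound $\sup_t\Vert r_\rho\Vert_W^2\le\widetilde C\Vert\psi\Vert^2_{L^2(0,T;(L^2(D))^2)}$, which you correctly extract from \eqref{stability-estimate} at the outset (this also uses, as the paper implicitly does, that $\widetilde C$ is uniform for $\rho\in(0,1)$ because the $H^3$ bounds $M_4$ of $y$ and $y_\rho$ are). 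The paper groups terms differently: in its equation \eqref{delta} the linear-in-$\delta_\rho$ part keeps mixed coefficients $y$ and $y_\rho$, and every source term is bilinear in $z$ and $y_\rho-y$, i.e.\ linear in the small quantity $y_\rho-y$ with the already-estimated $z$ (Proposition \ref{exis-linearize}) as coefficient; hence no quadratic remainder in the difference quotient ever appears, and stability is invoked only at the very end, to convert $\int_0^T\Vert y_\rho-y\Vert_{H^2}^2\,ds$ into $C\rho^2\int_0^T\Vert\psi\Vert_2^2\,ds$. The two groupings are algebraically equivalent (since $\rho\, r_\rho=y_\rho-y$ and $r_\rho=z+\delta_\rho$) and both end with $\sup_{t\in[0,T]}\Vert\delta_\rho\Vert_V^2\le C(T)\rho^2$: yours makes the $O(\rho)$ smallness of the forcing visible from the start at the price of estimating nonlinear remainder terms in $r_\rho$, while the paper's avoids such terms at the price of carrying $y_\rho$-dependent coefficients (controlled uniformly via Lemma \ref{Lemma-H3}) through the Gronwall step.
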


\begin{proof}
	We recall that $y$ verifies the equation
	\begin{equation}
	\begin{array}{ll}
	\partial_t(v(y))&=-\nabla \mathbf{\bar P}+\nu \Delta y-(y\cdot \nabla)v-\sum_{j}v^j\nabla y^j+(\alpha_1+\alpha_2)\text{div}(A^2)\vspace{2mm}\\
	& \qquad+\beta \text{div}(|A|^2A)+U. 
	\end{array}
	\end{equation}
	Therefore
	\begin{equation}
	\begin{array}{ll}
	\label{Gat-equ}
	\partial_t(v(y_\rho-y))=&-\nabla ( \mathbf{\bar P}_\rho-\mathbf{\bar P})+\nu \Delta (y_\rho-y)-((y_\rho\cdot \nabla)v_\rho-(y\cdot \nabla)v)\vspace{2mm}\\
	&-\sum_{j}(v_\rho^j\nabla y_\rho^j-v^j\nabla y^j)
	+(\alpha_1+\alpha_2)\text{div}(A_\rho^2-A^2)\vspace{2mm}\\
	&+\beta\text{ div} \left ( |A(y_\rho)|^2A(y_\rho)-|A(y)|^2A(y)\right)+\rho\psi,
	\end{array}
	\end{equation}
	where
	$	v_\rho=v(y_\rho),\quad v=v(y),\quad A_\rho=A(y_\rho),\quad A=A(y).$\\
	
	Setting $z_\rho=\frac{y_\rho-y}{\rho}$,
	$\pi_\rho=\frac{ \mathbf{\bar P}_\rho- \mathbf{\bar P}}{\rho}$,
	we  notice that  $z_\rho$ is the unique solution for the following equation
	$$
	\begin{array}{ll}
	\partial_t(v(z_\rho))=&\psi-\nabla \pi_\rho+\nu \Delta z_\rho-\left[(z_\rho\cdot\nabla) v(y_\rho)+(y\cdot\nabla) v(z_\rho)\right]\vspace{2mm}\\
	&-\sum_{j}[v^j(z_\rho)\nabla y_\rho^j+v^j(y)\nabla z_\rho^j]+\beta\text{div} \bigl( |A(y_\rho)|^2A(z_\rho)
	+[A(z_\rho):A(y_\rho)\vspace{2mm}\\
	&+A(y):A(z_\rho)]A(y)\bigr)\vspace{2mm}+(\alpha_1+\alpha_2)\text{div}
	\left[A(z_\rho)A(y_\rho)+A(y)A(z_\rho)\right].
	\end{array}
	$$
	Defining   $\delta_\rho=z_\rho-z$, the following equation  holds
	\begin{equation}
	\begin{array}{ll}
	\label{delta}
	\hspace*{-0.3cm}	\partial_t(v(\delta_\rho))&=-\nabla (\pi_\rho-\pi)+\nu \Delta \delta_\rho-
	\left[(y\cdot\nabla) v(\delta_\rho)+(\delta_\rho\cdot \nabla)v(y_\rho)\right]-(z\cdot\nabla) v(y_\rho-y)
	\vspace{2mm}\\
	&-\sum_{j}[v^j(y)\nabla\delta_\rho^j+v^j(\delta_\rho)\nabla y_\rho^j+v^j(z)\nabla (y_\rho-y)^j]\vspace{2mm}\\
	&+(\alpha_1+\alpha_2)\text{div}\left[A(y)A(\delta_\rho)+A(\delta_\rho)A(y_\rho)+A(z)A(y_\rho-y)\right]\vspace{2mm}\\
	&+\beta\text{ div} \left\{\left( A(y):A(\delta_\rho)\right)A(y)+ |A(y_\rho)|^2A(\delta_\rho)\right\}\vspace{2mm}\\
	&+\beta\text{ div} (\left[ A(y_\rho-y):A(y_\rho)+A(y): A(y_\rho-y)\right]A(z)])\vspace{2mm}\\
	&
	+\beta\text{ div} \left[ \left(A(\delta_\rho):A(y_\rho)+A(z): A(y_\rho-y)\right)A(y)\right]=:g(\delta_\rho).
	\end{array}
	\end{equation}
	Multiplying this  equation by $\delta_\rho$, we write
	$ \partial_t\Vert\delta_\rho\Vert_{V}^2=2(g(\delta_\rho),\delta_\rho).$ Let us estimate the right hand side. 
	\begin{equation*}
	\begin{array}{ll}
	2(g(\delta_\rho),\delta_\rho)
	&=-4\nu \Vert \mathbb{D}\delta_\rho\Vert_{2}^2-2b(y,v(\delta_\rho),\delta_\rho)-2b(\delta_\rho,v(y_\rho),\delta_\rho)-2b(z,v(y_\rho-y),\delta_\rho)\vspace{2mm}\\&\quad-2b(\delta_\rho,\delta_\rho,v(y))
	-2b(\delta_\rho,y_\rho,v(\delta_\rho))-2b(\delta_\rho,(y_\rho-y),v(z))\vspace{2mm}\\
	&\quad+2(\alpha_1+\alpha_2)\left(\text{div}[A(y)A(\delta_\rho)+A(\delta_\rho)A(y_\rho)+A(z)A(y_\rho-y)],\delta_\rho\right)\vspace{2mm}\\
	&\quad
	+2\beta\left(\text{ div} \left[(A(y): A(\delta_\rho))A(y)+ |A(y_\rho)|^2A(\delta_\rho)\right],\delta_\rho\right)\vspace{2mm}\\
	&\quad+2\beta\left(\text{ div} \left[\left( A(y_\rho-y):A(y_\rho)+A(y): A(y_\rho-y)\right)A(z)\right],\delta_\rho\right)\nonumber\vspace{2mm}\\
	&\quad
	+2\beta\big(\text{ div}\left[\left(A(\delta_\rho):A(y_\rho)+A(z): A(y_\rho-y)\right)A(y)\right],\delta_\rho\big)\nonumber\vspace{2mm}\\
	&\quad=-4\nu \Vert \mathbb{D}\delta_\rho\Vert_{2}^2+R_1+R_2+R_3.
	\end{array}
	\end{equation*}
	We have 
	\begin{equation*}
	\begin{array}{ll}
	R_1=&-2b(y,v(\delta_\rho),\delta_\rho)-2b(\delta_\rho,v(y_\rho),\delta_\rho)-2b(z,v(y_\rho-y),\delta_\rho)\vspace{2mm}\\
	&\quad-2b(\delta_\rho,\delta_\rho,v(y))
	-2b(\delta_\rho,y_\rho,v(\delta_\rho))-2b(\delta_\rho,(y_\rho-y),v(z))\vspace{2mm}\\
	&=2b(y,\delta_\rho,v(\delta_\rho))-2b(\delta_\rho,y_\rho,v(\delta_\rho))-[2b(\delta_\rho,v(y_\rho),\delta_\rho)+2b(z,v(y_\rho-y),\delta_\rho)]\vspace{2mm}\\&\quad-2b(\delta_\rho,\delta_\rho,v(y))-2b(\delta_\rho,(y_\rho-y),v(z))\vspace{2mm}\\
	&\leq C\Vert y\Vert_{H^2}\Vert\delta_\rho\Vert_V^2+C\Vert y_\rho\Vert_{H^3}\Vert\delta_\rho\Vert_V^2+C\Vert \delta_\rho\Vert_{4}^2\Vert v(y_\rho)\Vert_{H^1}+C\Vert z\Vert_{\infty}\Vert \delta_\rho\Vert_V\Vert v(y_\rho-y)\Vert_{2}\vspace{2mm}
	\\
	&\quad+C\Vert\delta_\rho\Vert_{4}\Vert \delta_\rho\Vert_V\Vert v(y)\Vert_{4}+C\Vert \delta_\rho\Vert_{4}\Vert y_\rho-y\Vert_{W^{1,4}}\Vert v(z)\Vert_{2},
	\end{array}
	\end{equation*}
	where, to estimate the first two terms,
	we adapted the arguments used in \cite[(3.22)-(3.23)]{CC18},   and for the other terms we applied the H\"older inequality. Consequently, we derive
	\begin{equation*}
	\begin{array}{ll}
	R_1\leq C(\Vert y\Vert_{H^3}+\Vert y_\rho\Vert_{H^3})\Vert \delta_\rho\Vert_V^2+C\Vert z\Vert_{H^2}^2\Vert \delta_\rho\Vert_V^2+C\Vert y_\rho-y\Vert_{H^2}^2.
	\end{array}
	\end{equation*}
	Using the Stokes theorem and the boundary conditions for  $\delta_\rho$, we deduce
	\begin{align*}
R_2	&=-2(\alpha_1+\alpha_2)\int_D[A(y)A(\delta_\rho)+A(\delta_\rho)A(y_\rho)+A(z)A(y_\rho-y)]:\nabla\delta_\rho dx\vspace{2mm}\\
	&\leq C\Vert y\Vert_{W^{1,\infty}}\Vert \delta_\rho\Vert_V^2+C\Vert y_\rho\Vert_{W^{1,\infty}}\Vert \delta_\rho\Vert_V^2+C\Vert z\Vert_{W^{1,4}}\Vert y_\rho-y\Vert_{W^{1,4}}\Vert\delta_\rho\Vert_V\vspace{2mm}\\
	&\leq C(\Vert y\Vert_{H^3}+\Vert y_\rho\Vert_{H^3})\Vert \delta_\rho\Vert_V^2+C\Vert z\Vert_{H^2}^2\Vert \delta_\rho\Vert_V^2+C\Vert y_\rho-y\Vert_{H^2}^2.
	\end{align*}
	Analogous  arguments give 
	\begin{align*}
%	\begin{array}{ll}
	R_3
	=&-2\beta\int_D \left\{A(y):A(\delta_\rho)A(y)+ |A(y_\rho)|^2A(\delta_\rho)\right\}:\nabla\delta_\rho dx\vspace{2mm}\\
	& -2\beta\int_D\left\{\left[ A(y_\rho-y):A(y_\rho)+A(y): A(y_\rho-y)\right]A(z)\right\}:\nabla\delta_\rho dx\vspace{2mm}\\
	&\quad
	-2\beta\int_D \left[ \left(A(\delta_\rho): A(y_\rho)+A(z): A(y_\rho-y)\right)A(y)\right]:\nabla\delta_\rho dx\vspace{2mm}\\
	&\leq C\left(\Vert y\Vert_{H^3}^2+\Vert y_\rho\Vert_{H^3}^2\right)\left(1+\Vert z\Vert_{H^2}^2\right)\Vert \delta_\rho\Vert_V^2+C\Vert y_\rho-y\Vert_{H^2}^2,
	%\end{array}
	\end{align*}
	where we used H\"older and Young inequalities to deduce the last estimate.
	Summing up, we obtain
	\begin{equation*}
	\begin{array}{ll}
	\partial_t\Vert \delta_\rho\Vert_V^2&\leq 
	C\left(1+\Vert y\Vert_{H^3}^2+\Vert y_\rho\Vert_{H^3}^2\right)\left(1+\Vert z\Vert_{H^2}^2\right)\Vert \delta_\rho\Vert_V^2+C\Vert y_\rho-y\Vert_{H^2}^2\\[0.15cm]
	&\leq \widetilde K_0\Vert \delta_\rho\Vert_V^2+C\Vert y_\rho-y\Vert_{H^2}^2,
	\end{array}
	\end{equation*}
	by Lemma \ref{Lemma-H3} and Proposition \ref{exis-linearize}. Finally, 
	Gronwall's inequality yields
	\begin{equation*}
%	\begin{array}{ll}
	\sup_{s\in [0, T]}\Vert\delta_\rho(s)\Vert_V^2
	%&
	\leq C\int_0^T\Vert y_\rho-y\Vert_{H^2}^2ds.
%	\end{array}
	\end{equation*}
	Applying  \eqref{stability-estimate} with $y_1=y$ and $y_2=y_\rho$, we derive	
	\begin{equation*}
%	\begin{array}{ll}
	\sup_{s\in [0, T]}\Vert\delta_\rho(s)\Vert_V^2
	\leq  C\rho^2\int_0^T\Vert \psi\Vert_{2}^2ds.
%	\end{array}
	\end{equation*}
	Now,  taking  $\rho \to 0$ we infer  
	\eqref{representationz}, i.e.
	\begin{equation*}
	\begin{array}{ll}
	y_\rho=y+\rho z+\rho \delta_\rho \text{ with } \displaystyle\lim_{\rho\to 0} \sup_{t\in [0,T]}\Vert \delta_\rho\Vert_V^2=0.		
	\end{array}
	\end{equation*}
\end{proof}
\subsection{Variation of the cost functional \eqref{cost-uniqueness}}
As a consequence of Proposition \ref{prop-gateux-diff}, we get the following result on the variation for the cost functional \eqref{cost-uniqueness}.
\begin{prop}\label{vari-cost}
	Let us consider $U,\,y_0, \,\psi$ and $U_\rho=U+\rho\psi$ verifying 
	the hypothesis of the Proposition 
	\ref{prop-gateux-diff}. Then  
	\begin{equation*}
%	\begin{array}{ll}
	J(U_\rho,y_\rho)=J(U,y)+\rho\int_0^T\{ (\nabla_uL(t,U(t),y(t)),\psi(t))+(\nabla_yL(t,U(t),y(t)),z(t))\}dt+o(\rho),
	%\end{array}
	\end{equation*}
	where $y_\rho,y$ are the solutions of \eqref{I}, corresponding to $(U_\rho,y_0)$ and $(U,y_0)$, respectively, and $z$ is the solution of \eqref{Linearized}.
\end{prop}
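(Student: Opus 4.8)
The plan is to insert the first-order representation $y_\rho=y+\rho z+\rho\delta_\rho$ supplied by Proposition \ref{prop-gateux-diff} into the two quadratic pieces of the cost functional \eqref{cost-uniqueness} and to expand in powers of $\rho$, discarding everything of order $o(\rho)$. I would first dispose of the control penalty. Since $U_\rho=U+\rho\psi$ and $\nabla_uL(t,U,y)=\lambda U$, the elementary identity $\Vert U_\rho\Vert_2^2=\Vert U\Vert_2^2+2\rho(U,\psi)+\rho^2\Vert\psi\Vert_2^2$ gives
\[
\frac{\lambda}{2}\int_0^T\Vert U_\rho\Vert_2^2\,dt-\frac{\lambda}{2}\int_0^T\Vert U\Vert_2^2\,dt=\rho\int_0^T(\nabla_uL(t,U(t),y(t)),\psi(t))\,dt+\frac{\lambda\rho^2}{2}\int_0^T\Vert\psi\Vert_2^2\,dt,
\]
whose last term is $o(\rho)$ because $\psi\in L^2(0,T;(L^2(D))^2)$. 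This already produces the control part of the claimed expansion exactly.

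Next I would expand the tracking term. Writing $y_\rho-y_d=(y-y_d)+\rho(z+\delta_\rho)$ and recalling $\nabla_yL(t,U,y)=y-y_d$, I obtain
\[
\frac{1}{2}\int_0^T\Vert y_\rho-y_d\Vert_2^2\,dt-\frac{1}{2}\int_0^T\Vert y-y_d\Vert_2^2\,dt=\rho\int_0^T(\nabla_yL(t,U(t),y(t)),z(t))\,dt+\rho\int_0^T(y-y_d,\delta_\rho)\,dt+\frac{\rho^2}{2}\int_0^T\Vert z+\delta_\rho\Vert_2^2\,dt.
\]
The first term on the right is precisely the state contribution to the asserted formula; it remains to check that the last two terms are $o(\rho)$.

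For the quadratic remainder, the uniform bound on $z$ from Proposition \ref{exis-linearize} together with the bound on $\delta_\rho$ (uniform for $\rho\in(0,1)$) yields $\int_0^T\Vert z+\delta_\rho\Vert_2^2\,dt\leq C$, so that term is $O(\rho^2)=o(\rho)$. For the cross term, the Cauchy--Schwarz inequality in space and time gives
\[
\Big|\int_0^T(y-y_d,\delta_\rho)\,dt\Big|\leq\Vert y-y_d\Vert_{L^2(D\times(0,T))}\Big(\int_0^T\Vert\delta_\rho\Vert_2^2\,dt\Big)^{1/2},
\]
and since $\Vert\delta_\rho\Vert_2\leq C\Vert\delta_\rho\Vert_V$ and $\sup_{t\in[0,T]}\Vert\delta_\rho\Vert_V^2\to0$ as $\rho\to0$ by Proposition \ref{prop-gateux-diff}, the right-hand side tends to $0$; hence this term is $\rho\cdot o(1)=o(\rho)$. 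Adding the control and tracking contributions then produces the stated expansion. The proof is essentially bookkeeping, the whole analytic content being carried by the G\^ateaux representation of Proposition \ref{prop-gateux-diff}; the only mild point to verify is that the cross term $\int_0^T(y-y_d,\delta_\rho)\,dt$ is $o(1)$, and this reduces directly to the strong convergence $\delta_\rho\to0$ in $L^\infty(0,T;V)$ already established there.
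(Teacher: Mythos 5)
Your proof is correct, and it follows exactly the route the paper intends: the paper states Proposition \ref{vari-cost} without proof, as an immediate consequence of Proposition \ref{prop-gateux-diff}, and your expansion of the quadratic Lagrangian via $y_\rho=y+\rho z+\rho\delta_\rho$ (with the cross term controlled by $\sup_{t\in[0,T]}\Vert\delta_\rho\Vert_V^2\to0$ and the quadratic remainders being $O(\rho^2)$) is precisely the bookkeeping the paper leaves implicit.
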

\section{Adjoint equation}
\label{S7}
Let $f\in (L^2(D\times ]0,T[))^2$. Our aim is to prove the well posedness of the adjoint equation given by
\begin{align}\label{adjoint}
\begin{cases}
-\partial_t(v(p))-\nu \Delta p
-\text{curl }v(y)\times p+\text{curl }v(y\times 
p)-(\alpha_1+\alpha_2)\text{div}\big[A(y)A(p)+A(p)A(y)\big]&\\[1mm]
\qquad\qquad-\beta\text{div}\big[\vert A(y)\vert^2A(p)\big]-2\beta\text{div}\big[(A(y): A(p))A(y)\big]=f-\nabla \pi &\hspace*{-2.75cm}\text{in } D\times (0,T),\\[1mm]
\text{div}(p)=0 \quad &\hspace*{-2.75cm}\text{in } D\times (0,T),\\
p\cdot \eta=0, \quad [\eta \cdot \mathbb{D}(p)]\cdot \tau=0  \quad &\hspace*{-2.75cm}\text{on } \partial D\times (0,T),\\
p(T)=0 \quad &\hspace*{-2.75cm}\text{in } D.
\end{cases}
\end{align}
\begin{definition}\label{Def-adjoint}
	A function $p \in L^{\infty}(0,T;W)$ with $\partial_tp \in L^{2}(0,T;V)$ is a solution of \eqref{adjoint} if $p(T)=0$ and  for any $t\in [0,T]$, the following equality holds
	\begin{equation*}
	\begin{array}{ll}
	&(-\partial_tv(p),\phi)+2\nu(\mathbb{D} p,
	\mathbb{D}\phi)-b(\phi,p,v(y))+b(p,\phi,v(y))+b(p,y,v(\phi))\vspace{2mm}\\&
	\quad \quad -b(y,p,v(\phi))+(\alpha_1+\alpha_2)\big(A(y)A(p)+A(p)A(y),\nabla \phi\big)  +\beta\big( |A(y)|^2A(p), \nabla\phi\big)\vspace{2mm}\\&\qquad+2\beta \big(\left(A(p):	A(y)\right)A(y),\nabla\phi\big)= (f,\phi), \quad 
	\quad \text{for all } \phi \in W.
	\end{array}
	\end{equation*}
\end{definition}
Let us state  the following result about the solution of \eqref{adjoint}.
\begin{prop}\label{exis-adjoint}
	Let	$f\in (L^2(D\times ]0,T[))^2$, then there exists a unique solution $p$ to \eqref{adjoint} in the sense of Definition \ref{Def-adjoint}
	satisfying the following estimates
	\begin{align*}
	%\begin{array}{ll}
	\sup_{r\in [0,t]}\Vert p(r)\Vert_V^2+4\nu \int_0^{ t}\Vert  p(s)\Vert_V^2ds&\leq C(T)\int_0^{T}\Vert f(s)\Vert_{2}^2ds\quad \text{  for  any } t\in ]0,T];\vspace{2mm}\\
	\sup_{r\in [0, t]}\Vert p(r)\Vert_W^2 &\leq C(T)\int_0^T\Vert f(s)\Vert_2^2ds;\vspace{2mm}\\
	\int_0^T(\Vert \partial_tp\Vert_2^2+\alpha_1\Vert  \mathbb{D}\partial_tp\Vert_2^2)ds&\leq  C_1(T).	
%	\end{array}
	\end{align*}
\end{prop}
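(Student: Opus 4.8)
The plan is to mirror the Faedo--Galerkin argument of Section~\ref{S4}, exploiting the fact that \eqref{adjoint} is linear in $p$ and shares the same differential structure as \eqref{Linearized}. Since \eqref{adjoint} is a backward problem with terminal condition $p(T)=0$, I would first perform the time reversal $t\mapsto T-t$, setting $q(t)=p(T-t)$; this turns \eqref{adjoint} into a forward evolution equation for $q$ with initial condition $q(0)=0$ and right-hand side $\tilde f(t)=f(T-t)\in (L^2(D\times]0,T[))^2$, while the spatial operator (including the Navier-slip boundary conditions and the divergence-free constraint) keeps the same form, the coefficients now involving $y(T-\cdot)$ whose $\widetilde{W}$-norm is still controlled by $M_4$ via Lemma~\ref{Lemma-H3}. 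I would then build approximations $q_n(t)=\sum_{i=1}^n c_i(t)h_i$ in the same special basis $\{h_i\}$ satisfying \eqref{basis1}, so that the ordinary differential system is again linear and admits a unique local solution $q_n\in\mathcal{C}([0,T_n],W_n)$.

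The heart of the matter is to reproduce the three uniform estimates of Lemma~\ref{Lem-esti-zn}. For the $V$-estimate I would test the approximate equation with $q_n$. The genuinely new feature is the set of adjoint trilinear terms in Definition~\ref{Def-adjoint}, namely $-b(\phi,p,v(y))+b(p,\phi,v(y))+b(p,y,v(\phi))-b(y,p,v(\phi))$. Upon setting $\phi=q_n$, the first two contributions $-b(q_n,q_n,v(y))+b(q_n,q_n,v(y))$ cancel identically, and the remaining combination $b(q_n,y,v(q_n))-b(y,q_n,v(q_n))$ is precisely of the type controlled by the curl identity \cite[Lem. 3.5.]{CC18} already used in Section~\ref{S4}; hence it is bounded by $C\Vert y\Vert_{\widetilde{W}}\Vert q_n\Vert_V^2$. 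The $(\alpha_1+\alpha_2)$- and $\beta$-divergence terms are handled exactly as for $I_1^2$, $I_1^3$, $I_1^4$ in Section~\ref{S4}: integration by parts annihilates the boundary contributions thanks to the Navier-slip conditions, the term $\beta\int_D|A(y)|^2|A(q_n)|^2\,dx$ carries a favourable sign and is discarded, and the rest is absorbed into $C\Vert y\Vert_{\widetilde{W}}(1+\Vert y\Vert_{\widetilde{W}})\Vert q_n\Vert_V^2$. Gronwall's inequality together with Lemma~\ref{Lemma-H3} then yields $\sup_{r}\Vert q_n(r)\Vert_V^2+4\nu\int_0^t\Vert\mathbb{D}q_n\Vert_2^2\,ds\le C(T)\int_0^T\Vert \tilde f\Vert_2^2\,ds$.

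For the $W$-estimate I would test with $\mathbb{P}v(q_n)$ and follow the computation that produced \eqref{estimateWzn}: using $b(y,\mathbb{P}v(q_n),\mathbb{P}v(q_n))=0$, the elliptic bound $\Vert v(q_n)-\mathbb{P}v(q_n)\Vert_{H^1}\le K\Vert q_n\Vert_{H^2}$ of \cite[Lem. 5]{Bus-Ift-2}, and the observation that the divergences of the $A$-nonlinearities are sums of terms of the form $\mathcal{D}(y)\mathcal{D}^2(q_n)$ and $\mathcal{D}(q_n)\mathcal{D}^2(y)$, each multiplied by at most one further factor $\mathcal{D}(y)$, all right-hand contributions are majorised by $K\Vert y\Vert_{\widetilde{W}}(1+\Vert y\Vert_{\widetilde{W}})\Vert q_n\Vert_W^2$; another application of Gronwall and Lemma~\ref{Lemma-H3} gives the $W$-bound. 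The bound on $\partial_t q_n$ in $V$ follows by testing with $\partial_t q_n$ and using Young's inequality with a small parameter $\delta$ to absorb $\Vert\partial_t q_n\Vert_2^2$ and $\Vert\mathbb{D}\partial_t q_n\Vert_2^2$ on the left, exactly as in the last part of Section~\ref{S4}. Weak-$*$ compactness in $L^\infty(0,T;V)$ and $L^\infty(0,T;W)$ together with weak compactness in $L^2(0,T;V)$ then extract a limit $q\in L^\infty(0,T;W)$ with $\partial_t q\in L^2(0,T;V)$ solving the reversed problem in the sense of Definition~\ref{Def-adjoint}, and undoing the time change gives $p(t)=q(T-t)$ with the stated estimates. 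Uniqueness is immediate from linearity: the difference of two solutions solves the homogeneous equation with $\tilde f=0$ and zero initial datum, so the $V$-estimate forces it to vanish. I expect the only delicate point to be the bookkeeping of the adjoint trilinear terms in the $V$- and $W$-estimates, namely checking that the cancellation at $\phi=q_n$ and the curl identity reduce them to the already-treated structure, while everything else transfers routinely from Section~\ref{S4}.
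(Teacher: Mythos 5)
Your proposal is correct and follows essentially the same route as the paper: time reversal $q(t)=p(T-t)$ to obtain a forward problem \eqref{adjoint-initial}, Faedo--Galerkin approximation in the special basis \eqref{basis1}, adaptation of the three uniform estimates of Lemma \ref{Lem-esti-zn} (the paper merely asserts this by noting the structural similarity of \eqref{approximation-adjoint} and \eqref{approximationz-n}, whereas you verify the key points -- the cancellation of $-b(q_n,q_n,v(\bar y))+b(q_n,q_n,v(\bar y))$, the reduction of $b(q_n,\bar y,v(q_n))-b(\bar y,q_n,v(q_n))$ to the curl identity of \cite[Lem. 3.5.]{CC18}, and the sign of the $\beta$-term), followed by weak-$*$ compactness and uniqueness by linearity. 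No gaps; your write-up is in fact more detailed than the paper's own argument.
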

\subsection{Proof of Proposition \ref{exis-adjoint}}%%Approximation
Notice  that $p$ is the solution of \eqref{adjoint} if and only if $q(t)=p(T-t)$ is the solution of the following initial value problem with $\bar y(t)=\bar y(T-t), \bar f(t)= f(T-t)$ and $ \bar \pi(t)=\pi(T-t)$
\begin{equation}
\label{adjoint-initial}
\left\{
\begin{array}{ll}
\partial_t(v(q))-\nu \Delta q
-\text{curl }v(\bar y)\times q+\text{curl }v( \bar y\times 
q)-(\alpha_1+\alpha_2)\text{div}\big[A(\bar y)A(q)+A(q)A(\bar y)\big]&\vspace{2mm}\\
\qquad\qquad-\beta\text{div}\big[\vert A(\bar y)\vert^2A(q)\big]-2\beta\text{div}\big[(A(\bar y):A(q))A(\bar y)\big]=\bar f-\nabla \bar\pi &\hspace*{-2.75cm}\text{in } D\times (0,T),\vspace{2mm}\\
\text{div}(q)=0 \quad &\hspace*{-2.75cm}\text{in } D\times (0,T),\vspace{2mm}\\
q\cdot \eta=0, \quad [\eta \cdot \mathbb{D}(q)]\cdot \tau=0  \quad &\hspace*{-2.75cm}\text{on } \partial D\times  (0,T),\vspace{2mm}\\
q(0)=0 \quad &\hspace*{-2.75cm}\text{in } D.
\end{array}
\right.
\end{equation}
According to the  Definition \ref{adjoint}, $q$ is the solution of \eqref{adjoint-initial} if 
$q \in L^{\infty}(0,T;W)$ with $\partial_tq \in L^{2}(0,T;V)$, $q(0)=0$ and, in addition, the following equality holds, for all  $\phi \in W$,
\begin{equation*}
\begin{array}{ll}
&(\partial_tv(q),\phi)+2\nu(\mathbb{D} q,
\mathbb{D}\phi)-b(\phi,q,v(\bar y))+b(q,\phi,v(\bar y))+b(q,\bar y,v(\phi))-b(\bar y,q,v(\phi))\vspace{2mm}\\
&+(\alpha_1+\alpha_2)\big(A(\bar y)A(q)+A(q)A(\bar y),\nabla \phi\big)  +\beta\big( |A(\bar y)|^2A(q), \nabla\phi\big)\vspace{2mm}\\
&\quad+2\beta \big((A(q):A(\bar y))A(\bar y),\nabla\phi\big)= (\bar f,\phi).
\end{array}
\end{equation*}
To study the equation \eqref{adjoint-initial}, we will  follow closely the  analysis used in  Section \ref{Linearized-section} to study the linearized equation. For that, consider $W_n=\text{span}\{h_1,\cdots,h_n\}$ and define the corresponding approximations
$  q_n(t)=\sum_{i=1}^n d_i(t)h_i$   for each  $t\in [0,T].$
The approximated problem for \eqref{adjoint-initial}  can be written as $q_n(0)=0$ and 
\begin{align}\label{approximation-adjoint}
%\begin{cases}
&(\partial_tv(q_n),\phi)+2\nu(\mathbb{D} q_n,\mathbb{D}\phi)-b(\phi,q_n,v(\bar y))+b(q_n,\phi,v(\bar y))+b(q_n,\bar y,v(\phi))\\&\hspace*{0.25cm}-b(\bar y,q_n,v(\phi))
+(\alpha_1+\alpha_2)\big(A(\bar y)A(q_n)+A(q_n)A(\bar y),\nabla \phi\big)  +\beta\big( |A(\bar y)|^2A(q_n), \nabla\phi\big)\nonumber\\
&\qquad+2\beta \big((A(q_n):
A(\bar y))A(\bar y),\nabla\phi\big)= (\bar f,\phi), \quad \text{ for any }\phi \in W_n. \nonumber
%&q_n(0)=0.
%\end{cases}
\end{align}
Now, we remark that the structure of \eqref{approximation-adjoint} and \eqref{approximationz-n} are similar. Therefore, by adapting the   arguments used to derive  Lemma \ref{Lem-esti-zn}, we are able to deduce the  following result for  \eqref{approximation-adjoint}.
\begin{lemma}\label{Lem-esti-qn}
	Let $f\in (L^2(D\times ]0,T[))^2$. Then there exists a unique  solution $q_n\in \mathcal{C}([0,T],W_n)$ to \eqref{approximation-adjoint}. Moreover, there exist  positive constants $C(T),C_1(T)$, which are independent on the index $n$, such that the following estimates hold for each $t\in [0,T]$
	\begin{align*}
%	\begin{array}{ll}
	\sup_{r\in [0,t]}\Vert q_n(r)\Vert_V^2+4\nu \int_0^{ t}\Vert \mathbb{D} q_n(s)\Vert_2^2ds&\leq C(T)\int_0^{T}\Vert f(s)\Vert_{2}^2ds;\vspace{2mm}\\
	\sup_{r\in [0, t]}\Vert q_n(r)\Vert_W^2 &\leq C(T)\int_0^T\Vert f(s)\Vert_2^2ds;\vspace{2mm}\\
	\int_0^T(\Vert \partial_tq_n\Vert_2^2+\alpha_1\Vert  \mathbb{D}\partial_tq_n\Vert_2^2)ds&\leq  C_1(T).
%	\end{array}
	\end{align*}
\end{lemma}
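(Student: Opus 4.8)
The plan is to prove Lemma~\ref{Lem-esti-qn} by exploiting the structural analogy between the approximated adjoint problem \eqref{approximation-adjoint} and the approximated linearized problem \eqref{approximationz-n}, transporting the three estimates of Lemma~\ref{Lem-esti-zn} almost verbatim. First I would observe that \eqref{approximation-adjoint}, being a linear system of ODEs for the coefficients $d_i(t)$ with a positive-definite mass matrix coming from the $V$-inner product $(\partial_t v(q_n),\cdot)$, has a unique local solution $q_n\in\mathcal{C}([0,T_n],W_n)$; the a~priori estimates below then extend this to the whole interval $[0,T]$, exactly as in Remark~\ref{rmq-global-time}. The three terms to control are: the $V$-estimate (test with $q_n$, i.e.\ multiply by $d_i$ and sum), the $W$-estimate (test with $\mathbb{P}v(q_n)$, using the eigenbasis relation \eqref{basis1} and $\{\mu_i\}$), and the estimate for $\partial_t q_n$ in $V$ (test with $\partial_t q_n$, i.e.\ multiply by $\partial_t d_i$).

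For the $V$-estimate I would set $\phi=q_n$. The diffusion term yields $-4\nu\Vert\mathbb{D}q_n\Vert_2^2$. The crucial point is that the convective/adjoint terms recombine favourably: the pair $-b(\phi,q_n,v(\bar y))+b(q_n,\phi,v(\bar y))$ vanishes or reduces to a curl-type term once $\phi=q_n$, and $b(q_n,\bar y,v(q_n))-b(\bar y,q_n,v(q_n))$ can be rewritten using \cite[Lem.~3.5.]{CC18} as a $(\mathrm{curl}(v(q_n))\times\bar y,q_n)$ contribution, bounded by $C\Vert\bar y\Vert_{\widetilde{W}}\Vert q_n\Vert_V^2$. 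The $(\alpha_1+\alpha_2)$-terms and the $\beta$-terms are integrated by parts; the boundary integrals vanish thanks to the Navier-slip conditions (just as for $I_1^2,I_1^3,I_1^4$ in the linearized case), and the sign-definite term $-\beta\int_D|A(\bar y)|^2|A(q_n)|^2\,dx\le 0$ is discarded. Since $\bar y(t)=y(T-t)$ still satisfies the $H^3$-bound of Lemma~\ref{Lemma-H3} with $\sup_{r}M_4(r)$, Gronwall's inequality gives the first estimate. The $W$-estimate follows the derivation of \eqref{estimateWzn}: I would decompose $(f(q_n),\mathbb{P}v(q_n))$ into analogues of $J_1^0,\dots,J_1^4$, use $\widetilde{W}\hookrightarrow W^{1,\infty}\cap W^{2,4}$, $W\hookrightarrow W^{1,4}\cap L^\infty$, and $\Vert v(q_n)-\mathbb{P}v(q_n)\Vert_{H^1}\le K_1\Vert q_n\Vert_{H^2}$ from \cite[Lem.~5]{Bus-Ift-2}, then conclude by Gronwall. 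The $\partial_t q_n$-estimate is obtained exactly as the third bound in Lemma~\ref{Lem-esti-zn}, with $\delta$-Young absorptions.

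The only genuine obstacle is bookkeeping the adjoint-specific convective terms. In \eqref{approximationz-n} the transport structure is $b(y,v(z_n),\cdot)+b(z_n,v(y),\cdot)+\dots$, whereas in \eqref{approximation-adjoint} it appears in the transposed (adjoint) form $-b(\phi,q_n,v(\bar y))+b(q_n,\phi,v(\bar y))+b(q_n,\bar y,v(\phi))-b(\bar y,q_n,v(\phi))$. I would verify that after testing with $q_n$ (respectively $\mathbb{P}v(q_n)$, $\partial_t q_n$) these reassemble into the same curl-cross-product and gradient-of-velocity structures that were already estimated, so that no new difficulty beyond transposition of trilinear arguments arises; the regularity budget $\Vert\bar y\Vert_{H^3}\le M_4$ is identical and every nonlinear factor is again quadratic or cubic in $A(\bar y)$. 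Once this algebraic reorganisation is checked, the three inequalities are word-for-word those of Lemma~\ref{Lem-esti-zn} with $\psi$ replaced by $\bar f$ (hence by $f$), completing the proof.
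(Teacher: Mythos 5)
Your proposal is correct and follows essentially the same route as the paper, which itself proves Lemma~\ref{Lem-esti-qn} simply by remarking that \eqref{approximation-adjoint} has the same structure as \eqref{approximationz-n} and by adapting the arguments of Lemma~\ref{Lem-esti-zn}. Your additional verifications (exact cancellation of $-b(q_n,q_n,v(\bar y))+b(q_n,q_n,v(\bar y))$, the curl identity from \cite[Lem. 3.5]{CC18} for the transposed pair, the discarded sign-definite $\beta$-term, and the fact that $\bar y(t)=y(T-t)$ inherits the bound of Lemma~\ref{Lemma-H3}) are precisely the details the paper leaves implicit.
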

Consequentlty, Proposition \ref{exis-adjoint} follows by passing to the limit in \eqref{approximation-adjoint}.

\section{Existence of optimal control and optimality condition}
\label{S8}
This section starts with the presentation of  a  duality relation
between the solution of the linearized equation and the solution of the adjoint equation and  next shows that the control problem has a solution.  Taking into account the duality relation, it will be proved that the  solution of the control problem satisfies 
the first order optimality condition. 
\subsection{Duality property}
\begin{prop}\label{duality-prop}
	Let $y\in L^\infty(0,T;\widetilde{W})$ and $f,\psi \in (L^2(D\times ]0,T[))^2$. Then we have
	$$\int_0^T(\psi(t),p(t))dt=\int_0^T(f(t),z(t))dt, $$
	where $p$ is the solution of \eqref{adjoint} and $z$ is the solution of \eqref{Linearized}.
\end{prop}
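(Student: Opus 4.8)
The plan is to use the solutions $z$ and $p$ as test functions in each other's weak formulation, exploiting that the adjoint operator in \eqref{adjoint} is built as the formal transpose of the linearized operator in \eqref{Linearized}. Concretely, I would take $\phi=p(t)$ in the weak identity of Definition \ref{Def-lin} and $\phi=z(t)$ in the weak identity of Definition \ref{Def-adjoint}, integrate both over $(0,T)$, and subtract. This is legitimate because Propositions \ref{exis-linearize} and \ref{exis-adjoint} give $z,p\in L^\infty(0,T;W)$, so that $z(t),p(t)\in W$ for a.e. $t$ and are admissible test functions. The whole argument then reduces to checking that, after subtraction, every term on the left-hand sides cancels, leaving precisely $\int_0^T(\psi,p)\,dt=\int_0^T(f,z)\,dt$.

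First I would dispose of the time-derivative and viscous contributions. Since $z,p\in L^\infty(0,T;W)$ with $\partial_t z,\partial_t p\in L^2(0,T;V)$, the map $t\mapsto (z(t),p(t))_V$ is absolutely continuous and obeys $\frac{d}{dt}(z,p)_V=(\partial_t v(z),p)+(\partial_t v(p),z)$ a.e.; integrating and using $z(0)=0$ together with $p(T)=0$ gives $\int_0^T\big[(\partial_t v(z),p)+(\partial_t v(p),z)\big]\,dt=(z(T),p(T))_V-(z(0),p(0))_V=0$. The two viscous terms $2\nu(\mathbb{D}z,\mathbb{D}p)$ enter with the same sign in both identities and cancel at once.

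Next I would treat the second- and third-grade terms. The decisive remark is that each matrix coefficient, namely $A(y)A(z)+A(z)A(y)$, $|A(y)|^2A(z)$, $(A(z):A(y))A(y)$ and their $p$-analogues, is symmetric, so that $(S,\nabla\phi)=(S,\mathbb{D}\phi)=\tfrac12(S,A(\phi))$ for any symmetric $S$. After this reduction the needed invariance under the exchange $z\leftrightarrow p$ is purely algebraic: for the $(\alpha_1+\alpha_2)$-term it follows from the cyclicity of the trace, $\operatorname{tr}\big((ab+ba)c\big)=\operatorname{tr}\big((ac+ca)b\big)$ with $a=A(y),\,b=A(z),\,c=A(p)$, while for the two $\beta$-terms it is immediate because $\tfrac12|A(y)|^2\big(A(z):A(p)\big)$ and $\tfrac12\big(A(z):A(y)\big)\big(A(y):A(p)\big)$ are symmetric in $z$ and $p$. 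Hence all these contributions cancel.

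The remaining and, I expect, most delicate point is the cancellation of the convective trilinear terms, since these dictate the very form of the adjoint. Collecting the four $b$-terms from the linearized identity (with $\phi=p$) and the four from the adjoint identity (with $\phi=z$), two pairs cancel outright, and the remainder equals
\[ b(y,v(z),p)+b(z,v(y),p)+b(z,p,v(y))+b(y,p,v(z)). \]
Here the first and last combine to $b(y,v(z),p)+b(y,p,v(z))=0$ by the antisymmetry $b(u,w,\phi)=-b(u,\phi,w)$ applied to the divergence-free field $u=y\in V$, and the middle two combine to $b(z,v(y),p)+b(z,p,v(y))=0$ by the same property for $u=z\in V$ (these identities being understood in the transposed sense in which the $v(z)$-terms are defined in Definitions \ref{Def-lin} and \ref{Def-adjoint}). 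With all left-hand terms annihilated, the subtraction yields the asserted duality. The principal obstacles are thus the rigorous justification of the integration by parts in time under the stated regularity, and the careful bookkeeping ensuring that the adjoint weak form is exactly the transpose, so that these cancellations occur term by term.
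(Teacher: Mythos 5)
Your proof is correct, and its algebraic core --- testing each equation with the other solution, killing the time-derivative terms via $z(0)=0$, $p(T)=0$, cancelling the convective terms through the antisymmetry of $b$, and cancelling the grade-two and grade-three terms through symmetry of the matrix coefficients together with cyclicity of the trace --- is exactly the cancellation the paper performs. The difference is the level at which the computation is carried out. The paper never inserts $z$ and $p$ as test functions in the weak formulations: it works entirely with the Galerkin approximations, setting $p_n(t)=q_n(T-t)$, pairing the discrete adjoint identity \eqref{pndual} against the coefficients $c_i(t)$ of $z_n$ and the discrete linearized identity \eqref{zndual} against the coefficients $\bar d_i(t)$ of $p_n$; the identity $\int_0^T(\psi,p_n)\,dt=\int_0^T(f,z_n)\,dt$ is then obtained at finite dimension, where the integration by parts in time is elementary because $z_n,p_n$ have absolutely continuous scalar coefficients, and the conclusion follows by letting $n\to\infty$ using the weak convergences and the uniqueness of $z$ and $p$. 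Your route trades that limit passage for a regularity justification: one must (and you do) observe that $z(t),p(t)\in W$ for a.e.\ $t$ are admissible test functions, that the resulting scalar identities are integrable in $t$, and that $t\mapsto(z(t),p(t))_V$ is absolutely continuous with $\tfrac{d}{dt}(z,p)_V=(\partial_t z,p)_V+(z,\partial_t p)_V$, which is legitimate here since $z,p\in L^\infty(0,T;W)\cap H^1(0,T;V)$ by Propositions \ref{exis-linearize} and \ref{exis-adjoint}. Both arguments are complete: yours is shorter and self-contained given the existence results, while the paper's stays inside the approximation framework it has already built (and reuses the discrete identities elsewhere), so that no product rule for weak solutions and no time-dependent test functions ever need to be invoked. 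Your remark that the $b(y,v(z),p)$-type terms must be read in the transposed sense is also the right one, since $v(z)\in L^2$ only, and it is precisely this reading that makes the pairwise cancellations identities rather than integrations by parts.
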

\begin{proof} For  any  $t\in [0,T]$, let $p_n(t)=q_n(T-t)$, where $q_n$ is the solution of \eqref{approximation-adjoint}. Then 
	$  p_n(t)=\sum_{i=1}^n \bar d_i(t)h_i$
	verifies  $p_n(T)=0$ and
	\begin{align}\label{pndual}
	\begin{cases}
	&\hspace*{-0.22cm}(-\partial_tv(p_n),\phi)+2\nu(\mathbb{D} p_n,\mathbb{D}\phi)-b(\phi,p_n,v( y))+b(p_n,\phi,v( y))+b(p_n, y,v(\phi))\\[1mm]&-b(y,p_n,v(\phi))
	+(\alpha_1+\alpha_2)\big(A( y)A(p_n)
	+A(p_n)A( y),\nabla \phi\big) \\[1mm] &+\beta\big( |A( y)|^2A(p_n), \nabla\phi\big)
	+2\beta \big((A(p_n):
	A(y))A( y),\nabla\phi\big)= (f,\phi),  \text{ for any }\; \phi \in W_n.%,\\[0.15cm]
	%&p_n(T)=0.
	\end{cases}
	\end{align}
	On the other hand, let us recall that the solution of the linearized equation $z_n$ given by
	$  z_n(t)=\sum_{i=1}^n c_i(t)h_i$  %%% \quad\text{  for each } t\in [0,T],
	satisfies $z_n(0)=0$ and 
	\begin{align*}
	\begin{cases}
	&(\partial_tv(z_n),\phi)= \Big(\psi+\nu \Delta z_n-(y\cdot \nabla )v(z_n)-(z_n\cdot \nabla)v(y)-\sum_{j}v(z_n)^j\nabla y^j\\[1mm]&\qquad-\sum_{j}v(y)^j\nabla z_n^j
	+(\alpha_1+\alpha_2)\text{div}[A(y)A(z_n)+A(z_n)A(y)] \\[1mm]&\qquad+\beta\text{ div}\big [ |A(y)|^2A(z_n)\big]+2\beta\text{ div} 
	\big [(A(z_n):A(y))A(y)\big],\phi\Big)
	,   \text{  for  any }\,\phi \in W_n. 
	\end{cases}
	\end{align*}
	This means that $z_n(0)=0$ and
	\begin{align}\label{zndual}
	&(\partial_tv(z_n),\phi)+2\nu(\mathbb{D}
	z_n,\mathbb{D}\phi)+b(y,v(z_n),\phi)+b(z_n,v(y),\phi)+b(\phi,y,v(z_n))\\
	&\qquad+b(\phi,z_n,v(y))
	+(\alpha_1+\alpha_2)\big(A(y)A(z_n)+A(z_n)A(y),\nabla \phi\big)\nonumber\\[1mm]
	&\qquad	 +\beta\big( |A(y)|^2A(z_n), \nabla\phi\big)
	+2\beta \big((A(z_n):
	A(y))A(y),\nabla\phi\big)= (\psi,\phi),
	\text{ for all } \,\phi \in W_n.\nonumber
	\end{align}
	Setting $\phi=h_i$ in \eqref{pndual},   multiplying \eqref{pndual} by $c_i(t)$ and summing from $i=1$ to $n$, we get
	\begin{align}\label{p_nfinal}
	%\begin{cases}
	&(-\partial_tv(p_n),z_n)+2\nu(\mathbb{D} p_n,\mathbb{D}z_n)-b(z_n,p_n,v( y))+b(p_n,z_n,v( y))\\[1mm]&+b(p_n, y,v(z_n))-b(y,p_n,v(z_n))+(\alpha_1+\alpha_2)\big(A( y)A(p_n)+A(p_n)A( y),\nabla z_n\big) \nonumber\\[1mm]& +\beta\big( |A( y)|^2A(p_n), \nabla z_n\big)+2\beta \big((A(p_n):
	A(y))A( y),\nabla z_n\big)= ( f,z_n)\nonumber. %\\[0.15cm]
	%	&p_n(T)=0.
	%\end{cases}
	\end{align}
	Similarly, taking $\phi=h_i$ in \eqref{zndual}, multiplying \eqref{zndual} by $\bar{d}_i(t)$ and summing from $i=1$ to $n$, we obtain
	\begin{align}\label{z_nfinal}
	&(\partial_tv(z_n),p_n)+2\nu(\mathbb{D} z_n,\mathbb{D}p_n)+b(y,v(z_n),p_n)+b(z_n,v(y),p_n)\\&+b(p_n,y,v(z_n))
	+b(p_n,z_n,v(y))+(\alpha_1+\alpha_2)\big(A(y)A(z_n)+A(z_n)A(y),\nabla p_n\big)\nonumber \\ &+\beta\big( |A(y)|^2A(z_n), \nabla p_n\big)+2\beta \big((A(z_n):A(y))A(y),\nabla p_n\big)= (\psi,p_n)\nonumber.
	\end{align}
	A standard integration by part ensures that
	\begin{align*}
	(-\partial_tv(p_n(t)),z_n(t))= (\partial_tv(z_n(t)),p_n(t))-\dfrac{d}{dt}\big((p_n(t),z_n(t))+2\alpha_1(\mathbb{D}p_n(t),
	\mathbb{D}z_n(t)) \big).
	\end{align*}
	Integrating with respect to the time variable on the interval
	$[0,T]$ and using that $z_n(0)=0, \quad p_n(T)=0$, we derive
	%\begin{align*}
	$$
	\int_0^T(-\partial_tv(p_n(t)),z_n(t))dt= \int_0^T(\partial_tv(z_n(t)),p_n(t))dt.$$
	%	\end{align*}
	Now, combining \eqref{p_nfinal}  and \eqref{z_nfinal}, 
	and  using the fact that $(A,B)=(A^T,B^T)$, for any $A,B \in \mathcal{M}_{2\times 2}(\mathbb{R})$, we  deduce
	$$\int_0^T(\psi(t),p_n(t))dt=\int_0^T(f(t),z_n(t))dt.$$
	Therefore, taking the limit as $n\to \infty$, the result of the Proposition \ref{duality-prop} holds. 
\end{proof}
Considering  $f=\nabla_yL(\cdot,U,y) \in (L^2(]0,T[\times D))^2$ in Proposition \ref{duality-prop}, we obtain %the result
\begin{cor} Under the assumtions of Proposition \ref{duality-prop}, the following duality relation  holds
	$$\int_0^T(\psi(t),p(t))dt=\int_0^T(\nabla_yL(t,U(t),y(t)) ,z(t))dt. $$
\end{cor}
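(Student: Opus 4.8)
The plan is to recognize this corollary as the direct specialization of Proposition~\ref{duality-prop} to one particular choice of the source term $f$. The only substantive point to verify is that this choice lands in the admissible class $(L^2(D\times]0,T[))^2$ demanded by that proposition; once confirmed, the asserted identity follows by mere substitution.

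First I would recall the explicit form of the Lagrangian established in the Remark, namely $L(\cdot,u,y)=\tfrac{1}{2}\Vert y-y_d\Vert_2^2+\tfrac{\lambda}{2}\Vert u\Vert_2^2$, from which $\nabla_yL(t,U(t),y(t))=y(t)-y_d(t)$. Next I would check the integrability of this candidate: since the state satisfies $y\in L^\infty(0,T;\widetilde{W})$ and $\widetilde{W}\hookrightarrow(L^2(D))^2$, we have $y\in L^2(D\times(0,T))$; combined with the hypothesis $y_d\in L^2(D\times(0,T))$, the difference $y-y_d$ belongs to $(L^2(D\times]0,T[))^2$. Hence $f:=\nabla_yL(\cdot,U,y)$ is an admissible right-hand side for the adjoint equation \eqref{adjoint}, so that Proposition~\ref{exis-adjoint} provides a (unique) adjoint solution $p$ corresponding to it.

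Finally, I would invoke Proposition~\ref{duality-prop} directly with this $f$. Its conclusion $\int_0^T(\psi(t),p(t))\,dt=\int_0^T(f(t),z(t))\,dt$ becomes, upon inserting $f=\nabla_yL(\cdot,U,y)$, precisely the claimed relation $\int_0^T(\psi(t),p(t))\,dt=\int_0^T(\nabla_yL(t,U(t),y(t)),z(t))\,dt$, with $p$ the adjoint state driven by this $f$ and $z$ the linearized solution driven by $\psi$. There is essentially no obstacle beyond the routine integrability verification above; the entire analytic content of the statement is already carried by Proposition~\ref{duality-prop}, whose proof rests on pairing the Galerkin formulations \eqref{pndual} and \eqref{zndual} and cancelling the symmetric bilinear and trilinear contributions.
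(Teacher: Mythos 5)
Your proposal is correct and matches the paper's own argument, which likewise obtains the corollary by simply taking $f=\nabla_yL(\cdot,U,y)$ in Proposition~\ref{duality-prop}; your extra verification that $y-y_d\in (L^2(D\times]0,T[))^2$ (via $y\in L^\infty(0,T;\widetilde{W})$ and $y_d\in L^2$) is the only detail the paper leaves implicit, and it is correct.
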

\subsection{Existence of an optimal control for \eqref{Problem-uniq}}
Let $(U_n,y_n)_n$ be a minimizing sequence, notice that $(U_n)_n$ is uniformly bounded in the closed convex set $ \mathcal{U}_{ad} \subset L^2(0,T;(H^1(D))^2)$. On the other hand, denoting by $y_n$  the solution of \eqref{I}, where $U$ is replaced  by $U_n$, Theorem \ref{THm1} and Lemma \ref{Lemma-H3} ensures that $(y_n)_n$ is uniformly bounded in $L^\infty(0,T;\widetilde{W}) \cap H^1(0,T;V)$.\\

By compactness with respect to the weak and weak-$*$ topologies in the spaces involved in the following product space 
$$L^2(0,T;(H^1(D))^2) \times  \big(L^\infty(0,T;\widetilde{W}) \cap H^1(0,T;V) \big ), $$ there exists $(U,y) \in L^2(0,T;(H^1(D))^2) \times  \big(L^\infty(0,T;\widetilde{W}) \cap H^1(0,T;V) \big )$ such that the following convergences hold, up to sub-sequences (denoted by the sequences)
\begin{equation}
\label{weak-cv-yn}
\begin{array}{ll}
U_n &\rightharpoonup \tilde U \quad \text{  in }  L^2(0,T;(H^1(D))^2),\vspace{1mm}\\
y_n &\overset{\ast}{\rightharpoonup} \tilde y\quad \text{  in }  L^\infty(0,T;V),\vspace{1mm}\\
y_n &\overset{\ast}{\rightharpoonup} \tilde y \quad \text{  in }  L^\infty(0,T;W),\vspace{1mm}\\
y_n &\rightharpoonup \tilde y \quad \text{  in }  L^2(0,T;\widetilde{W}),\vspace{1mm}\\
\partial_ty_n &\rightharpoonup \partial_t \tilde y \quad \text{  in }  L^2(0,T;V).
\end{array}
\end{equation}
From \eqref{weak-cv-yn}, we  deduce  that $\tilde y\in \mathcal{C}([0,T],V)$ and therefore $y_n(0)=y(0)$ converges to $\tilde y(0)$ in $V$, which gives  $\tilde y(0)=y_0$.
Now, standard arguments (similar reasoning as in \cite{Bus-Ift-2}) ensure that $(\tilde U, \tilde y)$ solves \eqref{I}.\\

Recall that  $J: L^2(0,T;(H^1(D))^2)\times L^2(0,T;\widetilde{W}) \to \mathbb{R}^+$ given by \eqref{cost-uniqueness} is convex and  continuous. From \eqref{weak-cv-yn} we have
\begin{align*}
	U_n \rightharpoonup \tilde U \quad \text{  in }  L^2(0,T;(H^1(D))^2) \text{ and }
	y_n \rightharpoonup \tilde y\quad \text{  in }  L^2(0,T;\widetilde{W}).
\end{align*}
 The (weak)
lower semicontinuity of $J$ ensures
$$ J(\tilde U,\tilde y) \leq \liminf_n J(U_n,y_n), $$
which gives that $(\tilde U,\tilde y)$ is an optimal pair.
\subsection{A necessary optimality condition for \eqref{Problem-uniq}}
Let $(\tilde U, \tilde y)$ be the optimal control pair. Consider $\psi \in \mathcal{U}_{ad}$ and  define $U_\rho=\tilde{U}+\rho(\psi-\tilde{U})$.  Thanks to Proposition \ref{prop-gateux-diff} and Proposition \ref{vari-cost}, we have   
\begin{align*}
\dfrac{J(U_\rho,y_\rho)-J(\tilde U, \tilde y)}{\rho}=\int_0^T\{ (\nabla_uL(\cdot,\tilde U, \tilde y),\psi-\tilde{U})+(\nabla_yL(\cdot,\tilde U, \tilde y),z)\}dt+\dfrac{o(\rho)}{\rho}.
\end{align*}
Then, the  G\^ateaux derivative of the cost functional $J$  is given by
\begin{align*}
\lim_{\rho\to 0}\dfrac{J(U_\rho,y_\rho)-J(\tilde U, \tilde y)}{\rho}=\int_0^T\{ (\nabla_uL(\cdot,\tilde U, \tilde y),\psi-\tilde{U})+(\nabla_yL(\cdot,\tilde U, \tilde y),z)\}dt \geq 0.
\end{align*}
Therefore, we have
%\begin{align*}
$$\int_0^T\{ (\nabla_uL(\cdot,\tilde U, \tilde y),\psi-\tilde{U})+(\nabla_yL(\cdot,\tilde U, \tilde y),z)\}dt \geq 0,$$
%\end{align*}
where $z$ is the unique solution to the linearized problem \eqref{Linearized} with $\psi$ replaced by $\psi-\tilde{U}$.
\vspace{2mm}\\
Let $\tilde p$ be  the unique solution of \eqref{adjoint}. The application of  Proposition \ref{duality-prop} yields
$$\int_0^T(\psi(t)-\tilde{U}(t),\tilde p(t))dt=\int_0^T(\nabla_yL(t,\tilde{U}(t),\tilde{y}(t)) ,z(t))dt. $$
Finally, we obtain the following optimality condition,  for any   $\psi \in \mathcal{U}_{ad}$
\begin{align}
%\begin{array}{ll}
\label{equation-duality}
&\int_0^T(\psi(t)-\tilde{U}(t),\tilde p(t)+\nabla_uL(t,\tilde{U}(t),\tilde{y}(t)))dt\nonumber\vspace{2mm}\nonumber\\&\quad=\int_0^T(\psi(t)-\tilde{U}(t),\tilde p(t))dt+\int_0^T (\nabla_uL(t,\tilde{U}(t),\tilde{y}(t)),\psi(t)-\tilde{U}(t))dt \geq 0.
%\end{array}
\end{align}
%As a conclusion,
The proof of Theorem \ref{main-thm} results from the  combination of the previous sections.\\
\section{Uniqueness of the optimal solution}\label{S-uniq}
In the previous section, 
we derived the coupled system  constituted by  the so-called  first order necessary optimality conditions for the control problem.
This means that, at this stage,  a solution of the coupled system is just a candidate for an optimal solution. Thus,   
the uniqueness of the solution of the coupled system is  a very important issue in determining the optimal solution. This section addresses this uniqueness problem for the 
cost functional  given by  the  quadratic Lagrangian \eqref{cost-uniqueness}, and is devoted to the proof 
of Theorem \ref{Thm-uniq}.
\vspace{2mm}\\
Let us consider the problem \eqref{Problem-uniq}  and $f=\nabla_yL(\cdot,U,y)$ in \eqref{adjoint}. Then
\begin{cor}
	There exists $\widetilde\lambda>0$ such that
	\begin{align}\label{estimate-adjoint}
	\sup_{r\in [0, T]}\Vert p(r)\Vert_W^2 \leq  C(T)\int_0^T\Vert y-y_d\Vert_2^2ds\leq C(T)\int_0^T( M_0^2(s)+\Vert y_d\Vert_2^2)ds:= \widetilde\lambda^2.
	\end{align}
\end{cor}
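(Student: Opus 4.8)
The plan is to read off \eqref{estimate-adjoint} as the specialization of Proposition \ref{exis-adjoint} to the particular source term $f=\nabla_yL(\cdot,U,y)$, and then to control the resulting integral by the $H^1$ a priori bound on the state. First I would identify this source term explicitly. Since the Lagrangian is $L(\cdot,U,y)=\tfrac12\Vert y-y_d\Vert_2^2+\tfrac{\lambda}{2}\Vert U\Vert_2^2$, its gradient with respect to $y$ is $\nabla_yL(\cdot,U,y)=y-y_d$. Because $y\in L^\infty(0,T;\widetilde W)$ by Theorem \ref{THm1} and $y_d\in (L^2(D\times]0,T[))^2$ by assumption, we have $f:=y-y_d\in (L^2(D\times]0,T[))^2$, so Proposition \ref{exis-adjoint} indeed applies to \eqref{adjoint} with this $f$.

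The first inequality in \eqref{estimate-adjoint} is then exactly the $W$-estimate of Proposition \ref{exis-adjoint} written for $f=y-y_d$, that is
$$\sup_{r\in[0,T]}\Vert p(r)\Vert_W^2\leq C(T)\int_0^T\Vert f(s)\Vert_2^2\,ds=C(T)\int_0^T\Vert y(s)-y_d(s)\Vert_2^2\,ds.$$
For the second inequality I would combine the triangle inequality with the $H^1$ estimate $\Vert y(s)\Vert_2\leq\Vert y(s)\Vert_{H^1}\leq M_0(s)$ furnished by the $H^1$-estimates lemma. This yields $\Vert y(s)-y_d(s)\Vert_2^2\leq 2\Vert y(s)\Vert_2^2+2\Vert y_d(s)\Vert_2^2\leq 2\bigl(M_0^2(s)+\Vert y_d(s)\Vert_2^2\bigr)$, and absorbing the factor $2$ into the generic constant $C(T)$ gives the stated bound. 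Finally, since $M_0$ is continuous, hence bounded, on $[0,T]$ and $y_d\in L^2$, the right-hand side is a finite positive number, which we name $\widetilde\lambda^2$.

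The argument is entirely routine and I expect no genuine obstacle: the only substantive ingredient beyond the two previous statements is the membership $f=y-y_d\in L^2$, which is immediate from the regularity of the state established in Theorem \ref{THm1} together with the standing hypothesis on $y_d$. Everything else is the verbatim conclusion of Proposition \ref{exis-adjoint} followed by the elementary $H^1$ control of $y$.
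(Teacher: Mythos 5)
Your proposal is correct and matches the paper's (implicit) argument: the corollary is stated as an immediate consequence of Proposition \ref{exis-adjoint} applied with $f=\nabla_yL(\cdot,U,y)=y-y_d$, followed by the $H^1$-estimates bound $\Vert y(s)\Vert_2\leq M_0(s)$, exactly as you do, with the factor $2$ from the triangle inequality absorbed into the generic constant $C(T)$. No gaps.
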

On the other hand, a standard computation leads to
\begin{align}
\exists \Gamma >0:\quad	\vert (\text{curl} v(z)\times z,\phi)\vert\leq \Gamma \Vert \phi\Vert_{H^2}\Vert z\Vert_{H^2}^2, \quad \forall \phi,z \in W.
\end{align}
\subsection{Proof of Theorem \ref{Thm-uniq}}
Let $U_1,U_2$ be two optimal control variables for \eqref{Problem-uniq} and $y_1,y_2$ be the corresponding optimal states with the adjoint states $p_1,p_2$.\\

Now, let us consider  $y=y_1-y_2, \mathbf{P}=\mathbf{P}_1-\mathbf{P}_2$, $U=U_1-U_2$  and notice that $y$ solves the equation
\begin{equation}
\label{dif1}
\begin{array}{ll}
\partial_t(v(y))-\nu \Delta y+(y\cdot \nabla)v(y)+(y\cdot\nabla) v(y_2)+(y_2\cdot\nabla )v(y)&\\[2mm]\qquad\qquad+\sum_{j}\big[v(y)^j\nabla y^j+v(y)^j\nabla y_2^j+v(y_2)^j\nabla y^j\big]&\\[2.5mm]
\qquad\qquad -(\alpha_1+\alpha_2)\text{div}\left[(A(y))^2+A(y)A(y_2)+A(y_2)A(y)\right]
&\\[2.5mm]
\qquad\qquad -\beta \text{div}\left[|A(y)|^2A(y)+|A(y)|^2A(y_2)
+\left(A(y):A(y_2)\right)A(y)\right]&\\[2.5mm]
\qquad\qquad-\beta \text{div}\left[\left(A(y):A(y_2)\right)A(y_2)+\left(A(y_2):A(y)\right) A(y)\right]
&\\[2.5mm]
\qquad\qquad-\beta \text{div}\left[\left(A(y_2):A(y)\right) A(y_2)\right]-\beta \text{div}\left[\vert A(y_2)\vert^2A(y)\right]= -\nabla \mathbf{P} +U.&\\
\end{array}
\end{equation}
Let us multiply \eqref{dif1} by $p_2$ and  integrate  on $D$.
Then integrating by parts and taking into account the boundary conditions for the state and adjoint variables, we get the following variational formulation
\begin{equation}
\label{y-p2}
\begin{array}{ll}
(\partial_t y,p_2)_V=-2\nu(\mathbb{D}y,\mathbb{D} p_2)-b(y,v(y),p_2)-b(y,v(y_2),p_2)-b(y_2,v(y),p_2)-b(p_2,y,v(y))&\\
-b(p_2,y_2,v(y))-b(p_2,y,v(y_2))-\dfrac{1}{2}(\alpha_1+\alpha_2)((A(y))^2+A(y)A(y_2)+A(y_2)A(y),A(p_2))&\\
+(U,p_2)	-\dfrac{1}{2}\beta(|A(y)|^2A(y)+|A(y)|^2A(y_2)+\left(A(y): A(y_2)\right)A(y),A(p_2))&\\
-\dfrac{1}{2}\beta(\left(A(y):A(y_2)\right)A(y_2)
+\left(A(y_2):A(y)\right) A(y)+\left(A(y_2):A(y)\right) A(y_2)+\vert A(y_2)\vert^2A(y),A(p_2)).&
\end{array}
\end{equation}
Considering the adjoint equation for $p_2$, from Definition \ref{Def-adjoint}, and  by setting the test function $\phi=y$, we write
\begin{equation}
\label{p2-y}
\begin{array}{ll}
(\partial_tp_2,y)_V=&2\nu(\mathbb{D} p_2,
\mathbb{D}y)-b(y,p_2,v(y_2))+b(p_2,y,v(y_2))+b(p_2,y_2,v(y))-b(y_2,p_2,v(y))\vspace{2mm} \\
&-(y_2-y_d,y)+(\alpha_1+\alpha_2)\big(A(y_2)A(p_2)+A(p_2)A(y_2),\nabla y\big)\vspace{2mm} \\
& +\beta\big( |A(y_2)|^2A(p_2), \nabla y\big)+2\beta \big(\left(A(p_2): 	A(y_2)\right)A(y_2),\nabla y\big).
\end{array}
\end{equation}
Therefore summing the last two equalities \eqref{y-p2} and \eqref{p2-y}, we deduce
\begin{align*}
\partial_t(y,p_2)_V=&-(\text{curl} v(y)\times y,p_2)-(y_2-y_d,y)+(U,p_2)\\
&-(\alpha_1+\alpha_2)((A(y))^2,\nabla p_2)-\beta(\vert A(y)\vert^2A(y)+\vert A(y)\vert^2A(y_2),\nabla p_2)\\
&-2\beta(\left(A(y):A(y_2)\right)A(y),\nabla p_2),
\end{align*}
where we used the symmetry of $A$ and the bilinear form $b$ to get the last equality. By integrating from $t=0$ to $t=T$ and taking into account the initial and terminal conditions for $y$ and $p_2$, we have
\begin{align}
\label{uniq.1}
%\begin{array}{ll}
&0=-\int_0^T(\text{curl} v(y)\times y,p_2)-(y_2-y_d,y)+(U,p_2) dt-\int_0^T(\alpha_1+\alpha_2)((A(y))^2,\nabla p_2)dt\vspace{2mm}\nonumber\\&\qquad-\int_0^T\beta(\vert A(y)\vert^2A(y)+\vert A(y)\vert^2A(y_2),\nabla p_2)-2\beta(\left(A(y):A(y_2)\right)A(y),\nabla p_2)dt.
%\end{array}
\end{align}
Analogously,   we can show 
that $\bar y=-y$
verifies the relation
\begin{align*}
\partial_t(\bar y,p_1)_V=&-(\text{curl} v(\bar y)\times \bar y,p_1))-(y_1-y_d,\bar y)-(U,p_1)\\[2mm]
&-(\alpha_1+\alpha_2)((A(\bar y))^2,\nabla p_1)-\beta(\vert A(\bar y)\vert^2A(\bar y)+\vert A(\bar y)\vert^2A(y_1),
\nabla p_1)\\[2mm]
&-2\beta(\left(A(\bar y):A(y_1)\right)A(\bar y),\nabla p_1).
\end{align*}
Integrating from $t=0$ to $t=T$, taking into account that $\bar y=-y$ and using the initial and terminal conditions for $y$ and $p_2$, we have
\begin{align}
\label{uniq.2}
%\begin{array}{ll}
0&=-\int_0^T(\text{curl} v(y)\times y,p_1))+(y_1-y_d,y)-(U,p_1) dt-\int_0^T(\alpha_1+\alpha_2)((A(y))^2,\nabla p_1)dt\vspace{2mm}\nonumber\\&\qquad+\int_0^T\beta(\vert A(y)\vert^2A(y)-\vert A(y)\vert^2A(y_1),\nabla p_1)-2\beta(\left(A(y):A(y_1)\right)A(y),\nabla p_1)dt.
%\end{array}
\end{align}
By summing \eqref{uniq.1} and \eqref{uniq.2}, we  infer that
\begin{align*}
%\begin{array}{ll}
&\int_0^T\Vert y\Vert_{2}^2dt-\int_0^T(U,p_1-p_2)dt
\vspace{2mm}\\
&\quad= \int_0^T(\text{curl} v(y)\times y,p_1+p_2)dt+\int_0^T(\alpha_1+\alpha_2)((A(y))^2,\nabla p_1+\nabla p_2)dt\vspace{2mm}\\
&\quad-\int_0^T\beta(\vert A(y)\vert^2A(y)-\vert A(y)\vert^2A(y_1),\nabla p_1)+2\beta(A(y): A(y_1)A(y),\nabla p_1)dt\vspace{2mm}\\
&\quad+\int_0^T\beta(\vert A(y)\vert^2A(y)
+\vert A(y)\vert^2A(y_2),\nabla p_2)+2\beta(\left(A(y):A(y_2)\right)A(y),\nabla p_2)dt\vspace{2mm}\\
&\quad:=I_1+I_2+I_3.
%\end{array}
\end{align*}
From \eqref{equation-duality}, the following optimality conditions  hold
\begin{align*}
\int_0^T(\psi-U_1, p_1+\lambda U_1)dt \geq 0,\quad
\int_0^T(\psi-U_2, p_2+\lambda U_2)dt \geq 0, \quad \forall \psi \in \mathcal{U}_{ad}.
\end{align*}
Setting $\psi=U_2$ and $\psi=U_1$  in the first and the second optimality conditions, respectively, we deduce
\begin{align*}
\lambda\int_0^T\Vert U\Vert_{2}^2dt \leq -\int_0^T(U,p_1-p_2)dt.
\end{align*}
On the other hand, \eqref{stability-estimate} and \eqref{estimate-adjoint} yield
\begin{align*}
\vert I_1\vert &\leq  \Gamma \int_0^T(\Vert p_1\Vert_{H^2}+\Vert p_2\Vert_{H^2})\Vert y\Vert_{H^2}^2 \leq 2\Gamma \widetilde{C}\widetilde{\lambda} \int_0^T\Vert U\Vert_2^2dt,\\
\vert I_2\vert &\leq 4(\alpha_1+\alpha_2)\int_0^T\Vert y\Vert_{W^{1,4}}^2(\Vert p_1\Vert_{H^2}+\Vert p_2\Vert_{H^2})dt\leq 8\kappa\widetilde{C}\widetilde{\lambda}(\alpha_1+\alpha_2) \int_0^T\Vert U\Vert_2^2dt,
\end{align*}
where $\kappa$ is a positive constant defined by the embedding $  H^2(D) \hookrightarrow W^{1,4}(D)$, 
\begin{align}\label{embedding-constant-uniq}
\Vert u\Vert_{W^{1,4}}^2\leq \kappa \Vert u\Vert_{W}^2, \quad \forall u \in W.
\end{align}
Thanks to  \eqref{state-H3-estimate}, \eqref{stability-estimate} and \eqref{estimate-adjoint}, by standard computations we obtain
\begin{align*}
\vert I_3\vert \leq 24\kappa\beta \widetilde{C}\widetilde{\lambda}\sup_{r\in [0, T]}M_4(r)\int_0^T\Vert U\Vert_2^2dt= 24\kappa\beta\gamma \widetilde{C}\widetilde{\lambda}\int_0^T\Vert U\Vert_2^2dt.
\end{align*}
Consequently, we have
\begin{align*}
\int_0^T\Vert y(t)\Vert_{2}^2dt+\lambda\int_0^T\Vert U(t)\Vert_{2}^2dt \leq  2 \widetilde{C}\widetilde{\lambda}[\Gamma+4\kappa(\alpha_1+\alpha_2)+12\kappa\beta\gamma]\int_0^T\Vert U(t)\Vert_2^2dt,
\end{align*}
which gives the claimed result.
\section*{Acknowledgment}
%\textbf{} 
This work is funded by national funds through the FCT - Funda\c c\~ao para a Ci\^encia e a Tecnologia, I.P., under the scope of the projects UIDB/00297/2020 and UIDP/00297/2020 (Center for Mathematics and Applications).

%\section*{Acknowledgments}

%\bibliographystyle{siamplain}
%\bibliography{references}

\end{document}